\documentclass[12pt]{article}
\usepackage{amsfonts}
\usepackage{amsthm}
\usepackage{latexsym,amssymb}
\usepackage{mathrsfs}
\usepackage{graphicx}
\usepackage{authblk}
\usepackage{psfrag}
\usepackage{amsbsy}
\usepackage{amsopn, amstext}
\usepackage{amsmath,multicol,amsopn}
\usepackage{color}
\usepackage{subfig}
\usepackage{float}

\oddsidemargin  = 0pt \evensidemargin = 0pt
\marginparwidth = 1in \marginparsep   = 0pt
\leftmargin     = 1.25in \topmargin =0pt
\headheight     = 0pt \headsep        = 0pt
\topskip =0pt
%\footheight     = 0.25in
\footskip       =0.25in \textheight     = 9in
\textwidth      = 6.5in

%Definition of endproof.
\def\sqr#1#2{{\vcenter{\vbox{\hrule height.#2pt
              \hbox{\vrule width.#2pt height#1pt \kern#1pt \vrule width.#2pt}
              \hrule height.#2pt}}}}

\def\dbR{{\mathop{\rm l\negthinspace R}}}

\def\3n{\negthinspace \negthinspace \negthinspace }
\def\2n{\negthinspace \negthinspace }
\def\1n{\negthinspace }

\def\dbF{{\mathbb{F}}}

\def\dbP{{\mathbb{P}}}

\def\dbR{{\mathbb{R}}}

\def\ds{\displaystyle}

\def\={\buildrel \triangle \over =}

%
%Lower case Greek letters
%
\def\a{\alpha}

\def\g{\gamma}

\def\e{\varepsilon}

\def\l{\lambda}

\def\t{\times}
\def\f{\varphi}
\def\th{\theta}
\def\om{\omega}

\def\ns{\noalign{\ss} }

\def\pa{\partial}

%
%Upper case Greek letters
%

\def\G{\Gamma}
\def\D{\Delta}

\def\Si{\Sigma}

\def\Om{\Omega}

%
%Calligraphic Capitals
%
\def\cA{{\cal A}}

\def\cE{{\cal E}}

\def\cH{{\cal H}}

\def\cJ{{\cal J}}

%%%%%%%%%
\def\mE{{\mathbb{E}}}

%%%%%%%%%%
\def\no{\noindent}

\def\ms{\medskip}
\def\bs{\bigskip}
\def\q{\quad}
\def\qq{\qquad}

%
%Mathoperators
%

\def\lan{\big\langle}
\def\ran{\big\rangle}

\def\max{\mathop{\rm max}}
\def\min{\mathop{\rm min}}

\def\pa{\partial}

\def\cd{\cdot}
\def\cds{\cdots}

\def\div{\hbox{\rm div$\,$}}

\def\deq{\mathop{\buildrel\D\over=}}

\def\({\Big (}
\def\){\Big )}
\def\[{\Big[}
\def\]{\Big]}
\def\be{\begin{equation}}
\def\bel{\begin{equation}\label}
\def\ee{\end{equation}}
\def\bt{\begin{theorem}}
\def\bcd{\begin{condition}}
\def\ecd{\end{condition}}
\def\et{\end{theorem}}
\def\bc{\begin{corollary}}
\def\ec{\end{corollary}}
\def\bde{\begin{definition}}
\def\ede{\end{definition}}
\def\bl{\begin{lemma}}
\def\el{\end{lemma}}
\def\bp{\begin{proposition}}
\def\ep{\end{proposition}}
\def\br{\begin{remark}}
\def\er{\end{remark}}
\def\ba{\begin{array}}
\def\ea{\end{array}}
\def\ed{\end{document}}
\def\ns{\noalign{\ms}}
\def\ds{\displaystyle}

\def\square#1{\vbox{\hrule\hbox{\vrule height#1%
     \kern#1\vrule}\hrule}}
\def\rectangle#1#2{\vbox{\hrule\hbox{\vrule height#1%
     \kern#2\vrule}\hrule}}

% The next lines import blackboard bold the font \bb.

\font\tenbb=msbm10 \font\sevenbb=msbm7
\font\fivebb=msbm5

\newfam\bbfam
\scriptscriptfont\bbfam=\fivebb
\textfont\bbfam=\tenbb
\scriptfont\bbfam=\sevenbb

\newtheorem{lemma}{Lemma}[section]
\newtheorem{remark}{Remark}[section]
\newtheorem{example}{Example}[section]
\newtheorem{theorem}{Theorem}[section]
\newtheorem{corollary}{Corollary}[section]

\newtheorem{definition}{Definition}[section]
\newtheorem{proposition}{Proposition}[section]
\newtheorem{condition}{Condition}[section]
%%%%%%%%%%%%%%%%%%%

%[section]

\allowdisplaybreaks

\makeatletter
   
   \@addtoreset{equation}{section}
\makeatother

\begin{document}
\title{\bf An inverse Cauchy problem of a stochastic hyperbolic equation  \ms}

\author[]{Fangfang Dou\thanks{Corrsponding author. Email: fangfdou@uestc.edu.cn.} }
\author[]{Peimin L\"{u}}
\affil{\small\it School of Mathematical Sciences, University of Electronic Science and Technology of China, Chengdu, China}
\date{}

\maketitle

\begin{abstract}
In this paper, we investigate an  inverse Cauchy problem for a stochastic hyperbolic equation. A Lipschitz type observability estimate is established using a pointwise Carleman identity. By minimizing the constructed Tikhonov-type functional, we obtain a regularized approximation to the problem. The properties of the approximation are studied by means of the Carleman estimate and Riesz representation theorem. Leveraging kernel-based learning theory, we simulate numerical algorithms based on the proposed regularization method. These reconstruction algorithms are implemented and validated through several numerical experiments, demonstrating their feasibility and accuracy.
\end{abstract}

\bs

\no{\bf 2010 Mathematics Subject
Classification}. 65N21, 60H15, 65D12.

\bs

\no{\bf Key Words}. Stochastic hyperbolic equations,  inverse Cauchy problem, observability, regularization, kernel-based learning theory.

\section{Introduction}
\label{sec:1}

%Such as the motion of a strand of DNA,  the dilatation throughout the sun

% (The internal structure of the sun). The study of the internal structure of the sun is an active area of research. One important international project is known as Project SOHO (Solar and Heliospheric Observatory) [9]. Its objective was to use measurements of the motion of the sun’s surface to obtain information about the internal structure of the sun. Indeed, the sun’s surface moves in a rather complex manner: at any given time, any point on the surface is typically moving towards or away from the center. There are also waves going around the surface, as well as shock waves propagating through the sun itself, which cause the surface to pulsate.

%A question of interest to solar geophysicists is to determine the origin of these shock waves. One school of thought is that they are due to turbulence, but the location and intensities of the shocks are unknown, so a probabilistic model can be considered.

%Inverse problems such as parameter inversion, source or boundary shape identification based on wave field measurements, are key issues in remote sensing, antenna design, non-destructive testing, geophysical exploration, medical imaging, and so on.

The inverse Cauchy problem of the wave equation is a rich and active area of research from medical diagnostics to geophysical exploration, which involves determining the internal properties of a medium or structure by using information obtained from boundary measurements.
Over the past few decades, researchers have achieved significant developments in inverse problems of deterministic wave equations (see
\cite{BH,BY2017,KL,Uhlmann} and the rich refereces
therein), while stochastic disturbances are
common and non-negligible factors in real
applications. These stochasticities stem from
the complexity of the medium and the influence
of the external environment. In such situations,
stochastic wave equations are more accurately to
simulate the wave behavior. For instants, in the
model of the motion for a strand of DNA, elastic
forces are generally related to the second
derivative in the spatial variable, and the
molecular forces are reasonably modeled by a
stochastic noise term \cite{GM}; and in the
consideration of the internal structure of the
sun from the available observations of the
sun’s surface, waves going around the surface,
as well as shock waves propagating through the
sun itself, which cause the surface to pulsate
are caused by turbulence, since the location and
intensities of the shocks are unknown, a
probabilistic model can be considered
\cite{D1988}.
%turbulence in the atmosphere causes the diffusion of acoustic waves, and the propagation of optical waves in the atmosphere is affected by atmospheric turbulence and the randomness of the refractive index.

Compared to  inverse problems for deterministic hyperbolic equations, the randomness and uncertainty of both the model itself and the data bring more challenges to the study of  inverse problems of stochastic  hyperbolic equations.
The current research on inverse problems of stochastic  hyperbolic equations is still in its early stages. The main inherent difficulties caused by randomness and uncertainty make it difficult to directly extend traditional methods used to study problems of deterministic  hyperbolic equations to  stochastic ones.
For inverse problems related to stochastic  hyperbolic equations in the framework of random media with time-reversibility, we can refer to \cite{BR,BPTB,FGPS} and the extensive researches. In other cases, we face to deal with the time irreversibility cause by randomness.
Recently,  researches of inverse problems for stochastic  hyperbolic equations mainly focus on the inverse source problems and potential identifications of stochastic time-harmonic acoustic, electromagnetic, and elastic wave equations and made significant progress \cite{BCL,LHL,LL,LLW,LLM,LLar,LW}.
In these works, the properties of mild solutions are significantly used, and also, pseudo-differential operators, micro-local analysis, and regularization method, such as Karzmarz method,  have been applied to prove the uniqueness of the expectation and variance of random sources and random potential functions and constructed approximate solutions  from the boundary measurement of the radiated random wave field at multiple frequencies.
These methods also have been extended to the study of source identification in the stochastic wave equation by measurement data at a certain time \cite{FZLW}.
%\textcolor{blue}{Bao and Zhang studied the stability of reconstructing the velocity field in the wave equation through dynamic Dirichlet-to-Neumann (DtN) mapping by constructing Gaussian beam solutions. They described the impact of changes in the dynamic DtN mapping on the reconstructed velocity field【2】.}
Gaussian beam solutions, which is a powerful tool in the study of uniqueness and stability of deterministic wave equation, have also adapted to identifying the scattering relation of stochastic wave equations on a Riemannian manifold through the emission of random sources \cite{HLO2014}.

In this paper, we study the problem of recovering the solution of stochastic hyperbolic equation from only boundary measurement of the wave field. The  inverse Cauchy problems have played in important role in the problem for reconstructing images of internal body structures in medical imaging, detecting flaws or inhomogeneities in materials and structures without causing damage, and exploring natural resources and understanding seismic activity in geophysics. Hence, the study for these problems have received ever increasing attention.

We first introduce some notations and spaces.

Let $T > 0$, $G \in \mathbb{R}^{n}$ ($n
\in \mathbb{N}$) be a given bounded domain
with the $C^{2}$ boundary $\G$. Put
\begin{equation}\label{6.1}
Q \= (0,T) \t G,\qq \Si \= (0,T) \t \G.
\end{equation}
For simplicity, we  use the notation $\ds z_i
\equiv z_{i}(x) \= \frac{\partial z(x)}{\partial
x_i}$, where $x_i$ is the $i$-th coordinate of a
generic point $x=(x_1,\cdots, x_n)$ in
$\mathbb{R}^{n}$. In a similar manner, we  use
notations $u_i$, $v_i$, etc. for the partial
derivatives of $u$ and $v$ with respect to
$x_i$. Also, we denote by $\nu(x) = (\nu^1(x),
\cdots, \nu^n(x))$ the unit outward normal
vector of $\G$ at point $x$. Denote by $T_x\G$ the tangent space of $\G$ at point $x$ and by $\{\mathbf{T}^1(x),\cds,\mathbf{T}^{n-1}(x)\}$  an orthonormal basis of $T_x\G$. Write $\pa_{\mathbf{T}^j} z$ the directive derivative of $z$ along $\tau^j$. Then, $\ds\nabla z = \sum_{j=1}^{n-1}\pa_{\mathbf{T}^j} z\mathbf{T}^j + \pa_\nu z \nu$. Next, denote $\mathbf{n}^j=(0,\cds,0,1,0,\cds,0)$, then $y_j = \nabla y\cd \mathbf{n}^j$.

Let $(\Om, {\cal F}, \dbF,
\dbP)$ with $\dbF=\{{\cal F}_t\}_{t \geq 0}$ be a complete filtered probability space on
which a  one dimensional standard Brownian
motion $\{W(t)\}_{t\geq 0}$ is defined, and $H$ be a Banach space. Denote by $L^{2}_{\dbF}(0,T;H)$ the Banach space consisting of all
$H$-valued and $\dbF$-adapted processes $X(\cdot)$ such that
$\mathbb{E}(|X(\cdot)|^2_{L^2(0,T;H)}) <
\infty$, by $L^{\infty}_{\dbF}(0,T;H)$ the
Banach space consisting of all $H$-valued and
$\dbF$-adapted bounded
processes, and by $L^{2}_{\dbF}(\Om;C([0,T];H))$ the Banach space consisting
of all $H$-valued and $\dbF$-adapted processes $X(\cdot)$ such that
$\mathbb{E}(|X(\cdot)|^2_{C(0,T;H)}) <
\infty$(similarly, one can define $L^{2}_{\dbF}(\Om;C^{k}([0,T];H))$ for any positive integer
$k$). All of these spaces are endowed with the
canonical norm.

Consider the following linear stochastic hyperbolic equation
\begin{eqnarray}\label{system1}
\left\{
\begin{array}{lll}\ds
\ds dz_{t} - \sum_{i,j = 1}^{n}(b^{ij}z_i)_{j}dt
= \big(b_1 z_t + b_2\cd\nabla z  + b_3 z + f
\big)dt +  b_4 z dW(t) & {\mbox { in }} Q,
 \\
\ns\ds  z = h_1,\q  & \mbox{ on } \Si,
\end{array}
\right.
\end{eqnarray}
where $\ds b_1 \in
L_{\dbF}^{\infty}(0,T;L^{\infty}(G))$,  $b_2 \in
L_{\dbF}^{\infty}(0,T;L^{\infty}(G;\mathbb{R}^{n}))$,
$b_3 \in L_{\dbF}^{\infty}(0,T;L^{p}(G))\,(p\in
[n,\infty])$, $b_4 \in
L_{\dbF}^{\infty}(0,T;L^{\infty}(G))$, $f \in
L^2_{\dbF}(0,T;L^2(G))$, $h_1\in
C^1_{\dbF}(0,T;L^2(\G))\cap
L^2_{\dbF}(0,T;H^1(\G))$ and $b^{ij} \in C^1(G)$
$(i,j = 1,2,\cdots, n)$ satisfy the following
assumptions:
\begin{condition}\label{condition of d}
\begin{enumerate}
\item[1.] $b^{ij} = b^{ji}$ $(i,j =
1,2,\cdots, n)$;

\item[2.]  For some constant $s_0
> 0$,
\begin{equation*}\label{bij}
 \sum_{i,j = 1}^{n}b^{ij}\xi^{i}\xi^{j} \geq s_0 |\xi|^2,
\,\,\,\,\,\,\,\,\,\, \forall\, (x,\xi)\=
(x,\xi^{1}, \cdots, \xi^{n}) \in G \t
\mathbb{R}^{n};
\end{equation*}

\item[3.]   There exists a positive function
$\f(\cdot) \in C^2(\overline{G})$ satisfying the
following:

\begin{enumerate}
\item[(1).] For some constant $\mu_0 >
0$, it holds
\begin{equation}\label{d1}
\begin{array}{ll}\ds
\sum_{i,j = 1}^{n}\Big\{ \sum_{i',j'}^n\Big[
2b^{ij'}(b^{i'j}\f_{i'})_{j'} -
b^{ij}_{j'}b^{i'j'}\f_{i'} \Big]
\Big\}\xi^{i}\xi^{j} \geq \mu_0
\sum_{i,j = 1}^{n}b^{ij}\xi^{i}\xi^{j}, \\
\ns\ds \hspace{5.5cm} \forall\,
(x,\xi^{1},\cdots,\xi^{n}) \in  \overline{G}  \t
\mathbb{R}^n.
\end{array}
\end{equation}

\item[(2).]  There is no critical point of
$\f(\cdot)$ in $\overline{G}$, i.e.,
\be\label{d2}\min_{x\in \overline{G} }|\nabla
\f(x)| > 0. \ee
\end{enumerate}
\end{enumerate}
\end{condition}

%where coefficients%
%\begin{equation} \label{aibi}
%\begin{array}{ll}\ds
%  b_1 \in L_{\dbF}^{\infty}(0,T;L^{\infty}(G)), \qq  b_2 \in
%L_{\dbF}^{\infty}(0,T;L^{\infty}(G;\mathbb{R}^{n})),  \\
%\ns\ds b_3 \in L_{\dbF}^{\infty}(0,T;L^{p}(G))\,(p\in [n,\infty]), \qq
%\,\, b_4 \in L_{\dbF}^{\infty}(0,T;L^{\infty}(G)),
%\end{array}
%\end{equation}
%%
%and the nonhomogeneous terms
%%
%\begin{eqnarray}\label{fg}
%f \in L^2_{\dbF}(0,T;L^2(G)), \q  g \in L^2_{\dbF}(0,T;L^2(G)),\q
%h_1\in C^1_{\dbF}(0,T;L^2(\G))\cap L^2_{\dbF}(0,T;H^1(\G)).
%\end{eqnarray}
%Moreover, the coefficients  %
%

It is easy to check that if $\varphi(\cdot) $
satisfies Condition \ref{condition of d}, then
for any given constants $a \geq 1$ and $b \in
\mathbb{R}$, the function $\tilde{\varphi} = a\varphi + b$
still satisfies Condition  \ref{condition of d}
with $\mu_0$ replaced by $a\mu_0$. Therefore  we
may choose $\varphi$, $\mu_0$, $c_0>0$, $c_1>0$ and
$T$ such that  the following  condition
holds:
\begin{condition}\label{condition2}

\begin{enumerate} \label{con2 eq1}
\item[1.]
\begin{equation}\label{con2 eq1}
\frac{1}{4}\sum_{i,j = 1}^{n}b^{ij}(x)\f_i(x)\f_j(x) \geq
R^2_1\=\max_{x\in\overline G}\f(x)\geq R_0^2 \=
\min_{x\in\overline G}\f(x),\q \forall x\in
\overline G;
\end{equation}
\item[2.]  $T> T_0\=2 R_1;$

\item[3.] $\ds\(\frac{2R_1}{T}\)^2<c_1<\frac{2R_1}{T};$

\item[4.]  $\mu_0 - 4c_1 -c_0 > 0$.
\end{enumerate}
\end{condition}

\begin{remark}\label{rm2}
As we have explained, since
$\ds\sum_{i,j = 1}^{n}b^{ij}\f_i\f_j >0$, 
it is evident that by choosing $\mu_0$  sufficiently large, Condition \ref{condition2} can be satisfied. We include this to emphasize the relationship between $0<c_0< c_1 <1$, $\mu_0$ and $T$.
\end{remark}
%

%
%\begin{remark}\label{rmT}
%If $(b^{ij})_{1\leq i,j\leq n}$ is the identity
%matrix, then it is easy to show that
%$$\f(x)=2|x-x_0|^2$$ for some $x_0\notin
%\overline G$ satisfy \eqref{d1} and \eqref{d2}
%in Condition \ref{condition of d}. However, this
%$\varphi(\cd)$ does not satisfy \eqref{con2 eq1} in
%the Condition \ref{condition2}. On the other
%hand, if we consider the problem with
%$(b^{ij})_{1\leq i,j\leq n}=I$, we do not need
%\eqref{con2 eq1}. Indeed, in this case, the
%inequality \eqref{obser esti2} below holds for all $\ds T>
%2\max_{x\in\overline G}|x-x_0|$. One can follow
%the proof  of Theorem \ref{observability} to see this. We omit the details.
%\end{remark}
%

\ms
The problem we considered in the paper can be conclude as follows:
\ms

\noindent\textbf{Problem (C)}.   Find the function $z$ solving \eqref{system1} with $\pa_\nu z=h_2$ on $\Si$ for a given $h_2\in L^2_{\dbF}(0,T;L^2(\G))$.

\ms

We first establish an observability estimate for the problem to show that the problem is conditional stability under suitable assumption of the solution. The main tool we used in this step is the Carleman estimate, which is a powerful tool in the study of second-order deterministic partial differential problems. We refer \cite{BY2017,Kli2013} for the application of Carleman estimates and  inverse problems for general hyperbolic systems with a single boundary measurement and \cite{Kli2015} for ill-posed Cauchy problems for deterministic partial differential equations.
At present, Carleman estimate has been introduced to stochastic partial differential equations, and thus provided opportunity to  study inverse problems and controllability  of stochastic partial differential equations \cite{Lu2021}.  In particular, the pointwise Carleman  identity was first established in \cite{Zhangxu3}. The inverse problem of the stochastic wave equation, including source or/and initial value identification problems with boundary observations, combining with observation data at a certain time or subregion of spatial domains  are studied in \cite{Lu1,LY2020,LZ2,Yuan}. From the perspective of practical applications, it is of great significance to study the inverse problems of stochastic hyperbolic equations with only boundary observation data.

To provide a convergent approximation to the problem, we apply Tikhonov regularization strategy, and construct a global concave functional, such that the minimizer of the functional uniquely exists. The existence and uniqueness, as well as the convergence rate of the minimizer are proved by the variational principle and Riesz representation theorem. We propose a numerical algorithm based on the proposed regularization and kernel-based learning theory \cite{Muller}, to avoid the deduction of adjoint problem, since it is much more difficult than the one for deterministic hyperbolic partial equation. Since Green's functions as the kernel are independent of meshes, thus there are more distinct advantages of the kernel-based approximation method, such as it can be easily numerically implement, especially for irregular domains.

This work is not a trivial extension to the previous research. While there are extensive research on the Cauchy problem for deterministic hyperbolic equations, the study of stochastic hyperbolic equations remains relatively unexplored due to their inherent randomness and uncertainty. Additionally, the properties of stochastic hyperbolic equations differ significantly from those of deterministic ones. For instance, the non-homogeneous terms in the diffusion terms of stochastic hyperbolic equations can be determined solely with boundary observations, whereas  source terms  in the deterministic hyperbolic equations cannot generally be identified through boundary observations alone \cite{LZ2}.  Moreover, unlike recent studies on inverse problems of stochastic hyperbolic equations, our work requires only boundary observations, without the need for initial or interior data.

The paper is organized as follows. In section 2, we devote to the observability estimate for the problem. Section 3 provides the proposed regularization based on the Tikhonov regularization method, as well as the properties of the regularized solution.  Numerical algorithms and experiments are presented in section 4 to illustrate the performance of the proposed method. At last,  we conclude the paper in section 5.

\ms

\section{Observability estimate}

Now we give a boundary observability estimate for the system \eqref{system1}.

\begin{theorem}\label{observability}
Let Conditions  \ref{condition of d} and
\ref{condition2} be satisfied. For any solution
of the equation
 \eqref{system1}, we have
\begin{equation} \label{obser esti2}
\begin{array}{ll}\ds
\q |(z,z_t)|_{L^2_\dbF(0,T;H^1(G))\times
L^2_\dbF(0,T;L^2(G))}
\\ \ns\ds  \leq Ce^{C(r_1^2 +1)}
\Big(|h_1|_{L^2_{\dbF}(0,T;H^1(\G))}+|h_{1,t}|_{L^2_{\dbF}(0,T;H^2(\G))}+|h_2|_{L^2_{\dbF}(0,T;L^2(\G))}
+ |f|_{L^2_{\dbF}(0,T;L^2(G))} \Big),
\end{array}
\end{equation}
where
\begin{eqnarray}\label{r1r2}
r_1 = |b_2|_{L_{\dbF}^{\infty}(0,T;L^{\infty}(G;\mathbb{R}^{n}))} +
|(b_1,b_3,b_4)|_{L_{\dbF}^{\infty}(0,T;L^{\infty}(G))^3}.
\end{eqnarray}
\end{theorem}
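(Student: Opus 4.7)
The plan is to prove Theorem \ref{observability} via a global weighted Carleman estimate with weight adapted to Condition \ref{condition2}. Set $\psi(t,x) = \varphi(x) - c_1(t-T/2)^2$ and $\theta = e^{\lambda \psi}$ with $\lambda \geq 1$ a large parameter to be chosen at the end. By Condition \ref{condition2}, $\psi(T/2,x) \geq R_0^2 > 0$ on $\overline G$, while $\psi(0,x) = \psi(T,x) = \varphi(x) - c_1(T/2)^2 < 0$, because $c_1 > (2R_1/T)^2$ forces $c_1(T/2)^2 > R_1^2 \geq \varphi(x)$. Thus $\theta$ is exponentially large near $t = T/2$ and exponentially small at the time endpoints, so the terms at $t=0$ and $t=T$ produced by integration will vanish in the limit $\lambda\to\infty$ without needing any information on the initial data.

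The core step is to apply the pointwise Carleman-type identity for the stochastic hyperbolic operator $\mathcal{P}z \equiv dz_t - \sum_{i,j}(b^{ij}z_i)_j\,dt$, of the kind established in \cite{Zhangxu3}, to the solution of \eqref{system1}. Schematically this yields
\[
\theta^2 \mathcal{P}z \cdot \mathcal{A}z + (\hbox{It\^o correction}) \geq \lambda\, Q[z_t,\nabla z, z]\,dt + dM + \hbox{\rm div}(V)\,dt,
\]
where $\mathcal{A}z$ is a suitable first-order multiplier, $dM$ is a stochastic differential that vanishes under $\dbE$, and $Q[\cdot]$ is a positive quadratic form in $(z_t,\nabla z,z)$ whose positivity rests on \eqref{d1}, \eqref{d2}, and the calibration $\mu_0 - 4c_1 - c_0 > 0$. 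Integrating over $Q$, taking expectation, and using the divergence theorem gives
\[
\lambda\,\dbE\!\int_Q \theta^2 (z_t^2 + |\nabla z|^2 + z^2)\,dx\,dt \leq C\dbE\!\int_Q \theta^2 |\mathcal{P}z|^2\,dx\,dt + (\hbox{boundary terms on }\Sigma) + (\hbox{It\^o remainder}).
\]
Substituting $\mathcal{P}z = (b_1 z_t + b_2\cd\nabla z + b_3 z + f)\,dt + b_4 z\,dW(t)$, the drift produces $r_1^2 \theta^2(z_t^2+|\nabla z|^2+z^2) + \theta^2 f^2$ and the It\^o correction produces $\theta^2 b_4^2 z^2 \leq r_1^2\theta^2 z^2$. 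All three lower-order contributions are absorbed into the left-hand side by choosing $\lambda \geq C(r_1^2+1)$. The boundary integral on $\Sigma$ reduces to a weighted quadratic expression in $z_t|_\Si$, $\pa_\nu z|_\Si$, and the tangential components $\pa_{\mathbf{T}^j} z|_\Si$; since $z|_\Si = h_1$ gives $\pa_{\mathbf{T}^j} z|_\Si = \pa_{\mathbf{T}^j} h_1$ and $z_t|_\Si = h_{1,t}$, while $\pa_\nu z = h_2$, this boundary term is controlled by $e^{C\lambda}$ times the three observation norms on the right of \eqref{obser esti2}. Bounding $\theta^2 \geq e^{-C\lambda}$ on a neighborhood of $t = T/2$ in $Q$ and fixing $\lambda = C(r_1^2+1)$ yields the claimed estimate.

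The main obstacle is the accurate bookkeeping in the pointwise stochastic Carleman identity: the presence of the It\^o term $b_4 z\,dW$ produces a quadratic variation contribution of order $\theta^2 b_4^2 z^2$ together with mixed terms coming from $d(\theta^2 z_t)$, and one must check that the positive form $Q[\cdot]$ guaranteed by \eqref{d1} still dominates these after absorption. A secondary technical point is the treatment of $z_t|_\Si$ in the boundary integrals: integration by parts along the tangential directions of $\G$ to extract $z_t$ from products with $\pa_{\mathbf{T}^j} z_t$ is what forces the strong norm $|h_{1,t}|_{L^2_\dbF(0,T;H^2(\G))}$, and an honest bookkeeping of these tangential terms will confirm that the three norms in \eqref{obser esti2} are indeed what is needed. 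Once the pointwise identity and the boundary accounting are in place, the exponential constant $Ce^{C(r_1^2+1)}$ follows at once from the calibration $\lambda \sim r_1^2+1$.
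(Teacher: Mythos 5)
Your overall strategy coincides with the paper's: the pointwise Carleman identity of \cite{Zhangxu3} with the weight built from $\varphi(x)-c_1(t-T/2)^2$, the choice $p^{ij}=b^{ij}$, absorption of the drift terms and of the It\^o correction by taking $\lambda\geq C(r_1^2+1)$, and the normal/tangential decomposition of the lateral boundary terms using $z|_\Sigma=h_1$, $\partial_\nu z|_\Sigma=h_2$. The genuine gap is your treatment of the temporal endpoints. After integrating the identity, one is left with terms of the form $\mathbb{E}\int_G\big[\sum_{i,j}b^{ij}\ell_t v_iv_j-2\sum_{i,j}b^{ij}\ell_iv_jv_t+\ell_tv_t^2-\Psi v_tv+(A\ell_t+\tfrac{\Psi_t}{2})v^2\big]\big|_{t=0}^{t=T}dx$, which involve the \emph{unknown} Cauchy data $(z(0),z_t(0))$ and $(z(T),z_t(T))$ with a prefactor of size $\lambda^3e^{-c\lambda}$. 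Your claim that these ``vanish in the limit $\lambda\to\infty$'' is not a valid argument: the estimate \eqref{obser esti2} must be produced at the fixed value $\lambda=C(r_1^2+1)$, and you cannot send $\lambda\to\infty$ because the lateral data then enter with a factor $e^{C\lambda}$ while the interior lower bound degenerates like $e^{-C\lambda}$; at fixed $\lambda$ these endpoint terms are simply uncontrolled. The paper eliminates them \emph{identically} by applying the identity to $y=\chi z$ for a temporal cutoff $\chi$ equal to $1$ near $t=T/2$ and vanishing at $t=0,T$, and then controls the commutator $\alpha=\chi_{tt}z+2\chi_tz_t-b_1\chi_tz$, supported where the weight is exponentially small, by the energy estimate of Proposition \ref{energy ensi} (It\^o's formula plus backward Gronwall). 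One could instead keep $z$ itself and try to absorb the $t=0,T$ terms, but that again requires precisely such an energy estimate relating $E(0)$, $E(T)$ to the energy near $t=T/2$; some ingredient of this kind is indispensable because the stochastic equation is not time-reversible, and your proposal never invokes it.

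The same omission resurfaces at the end: bounding $\theta^2$ from below only on a neighborhood of $t=T/2$ gives control of $(z,z_t)$ on that time slab, not the full $L^2_\dbF(0,T;H^1(G))\times L^2_\dbF(0,T;L^2(G))$ norm on the left of \eqref{obser esti2}; in the paper this passage, as well as the absorption of the cutoff terms, is exactly where Proposition \ref{energy ensi} is used (leading to \eqref{bhyperbolic7}). Your boundary bookkeeping is otherwise consistent with the paper's Step 3 (no tangential integration by parts on $z_t$ is actually needed there; the trace $v_t|_\Sigma$ is handled directly through $\theta_ty+\theta y_t$ with $y_t|_\Sigma=(\chi h_1)_t$), so once you add the cutoff-plus-energy-estimate mechanism the proof closes along the paper's lines.
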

\begin{proposition}\label{hidden r}
For any solution of the equation \eqref{system1}, it
holds that
\begin{equation}\label{hidden ine}
\begin{array}{ll}\ds
| \partial_\nu z |_{L^2_{\dbF}(0,T;L^2(\G))}
\leq C\Big(|(z_t,z)|_{L^2_\dbF(0,T; L^2(G)\times
H^1(G))} +|h_1|_{L^2_{\dbF}(0,T;H^1(\G))}   +
|f|_{L^2_{\dbF}(0,T;L^2(G))} \Big).
\end{array}
\end{equation}
\end{proposition}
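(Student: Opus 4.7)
My plan is to prove \eqref{hidden ine} by the classical vector-field multiplier method, adapted to the stochastic setting via It\^o's formula. Since $\G$ is of class $C^2$, fix a $C^1$ vector field $h=(h^1,\ldots,h^n):\overline G\to\dbR^n$ whose restriction to $\G$ is $\nu$, and take as multiplier $M\= h\cd\nabla z$. A feature peculiar to this setting is that $z$ itself---in contrast to $z_t$---carries no stochastic differential in $t$, so $dM=(h\cd\nabla z_t)\,dt$ is of bounded variation and the It\^o cross-variation $d\langle z_t,M\rangle$ vanishes in the product rule. Combining with \eqref{system1},
\[
d(z_tM)=\Big\{M\Big[\sum_{i,j=1}^n(b^{ij}z_i)_j+b_1z_t+b_2\cd\nabla z+b_3z+f\Big]+z_t(h\cd\nabla z_t)\Big\}\,dt+Mb_4z\,dW,
\]
and integrating over $G$, taking expectation on $[0,T]$, and using that the $dW$-integral has mean zero produces a deterministic identity relating $\mathbb{E}\int_G z_tM\,dx\,\big|_0^T$ to integrals over $Q$.

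Next, I integrate by parts in $x$ in the principal term $M\sum_{ij}(b^{ij}z_i)_j$ and in $z_t(h\cd\nabla z_t)=\tfrac12\,h\cd\nabla(z_t^2)$. Using $h|_\G=\nu$ together with the orthogonal decomposition $\nabla z=(\pa_\nu z)\nu+\sum_k(\pa_{\mathbf T^k}z)\mathbf T^k$ on $\G$ and the Dirichlet trace $z|_\G=h_1$ (so $\pa_{\mathbf T^k}z|_\G=\pa_{\mathbf T^k}h_1$), the boundary integrand collapses to $\tfrac12(\pa_\nu z)^2\sum_{ij}b^{ij}\nu^i\nu^j$ plus cross terms bilinear in $\pa_\nu z$ and the tangential derivatives of $h_1$. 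Since Condition \ref{condition of d} provides $\sum_{ij}b^{ij}\nu^i\nu^j\geq s_0>0$, Young's inequality isolates a positive multiple of $\int_\Si(\pa_\nu z)^2\,d\Si$, while the cross terms are absorbed into $C|h_1|^2_{L^2_\dbF(0,T;H^1(\G))}$. The interior contributions from the IBP together with the lower-order pieces $(b_1z_t+b_2\cd\nabla z+b_3z+f)M$ are controlled via Cauchy--Schwarz by $|(z_t,z)|^2_{L^2_\dbF(0,T;L^2(G)\times H^1(G))}+|f|^2_{L^2_\dbF(0,T;L^2(G))}$, with constants depending on $\|h\|_{C^1(\overline G)}$ and on the coefficient bound $r_1$ from \eqref{r1r2}.

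The main technical obstacle is the pointwise-in-time pair $\mathbb{E}\int_G z_tM\,dx\,\big|_{t=0,T}$, which is not dominated a priori by the $L^2$-in-time norms on the right of \eqref{hidden ine}. I would neutralise it by inserting a smooth temporal weight $\eta\in C^1([0,T])$ with $\eta(0)=\eta(T)=0$ and $\eta>0$ on $(0,T)$, and carrying out the scheme above with $\eta(t)M$ in place of $M$: then $\int_0^T d(\eta z_tM)=0$ kills both endpoint contributions, at the price of only the harmless extra interior term $\eta'(t)z_tM$ bounded by Cauchy--Schwarz. Passing from the resulting $\eta$-weighted boundary inequality to the un-weighted bound on $\int_0^T\int_\G(\pa_\nu z)^2\,d\G\,dt$ is then routine, either via a covering-in-time argument with a family of such weights or, equivalently, by exploiting the approximate energy identity obtained by applying It\^o's formula to $\tfrac12(z_t^2+\sum_{ij}b^{ij}z_iz_j)$ (whose only stochastic correction is the lower-order $b_4^2z^2$ term). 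Collecting all estimates and dividing by $s_0/2$ yields \eqref{hidden ine}.
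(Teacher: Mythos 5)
Your multiplier scheme is, up to the point where the endpoint terms appear, essentially the paper's own proof: the paper also fixes a $C^1$ extension $\xi$ of $\nu$, writes the pointwise identity \eqref{equality hidden1} (whose right-hand side contains precisely your term $d(z_t\,\xi\cd\nabla z)$), decomposes $\nabla z$ on $\G$ into normal and tangential parts, uses $\sum_{i,j}b^{ij}\nu^i\nu^j\ge s_0$ together with Young's inequality (with $\e=s_0/4$) to isolate $\tfrac{s_0}{2}\mE\int_\Si|\pa_\nu z|^2\,d\G dt$, absorbs the tangential contributions into $|h_1|^2_{L^2_\dbF(0,T;H^1(\G))}$, and controls the interior terms by Cauchy--Schwarz. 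The obstacle you single out --- the quantities $\mE\int_G z_t\,\xi\cd\nabla z\,dx$ at $t=0$ and $t=T$, which are not dominated by the $L^2$-in-time norms on the right of \eqref{hidden ine} --- is a real subtlety (the paper's sketch passes over it in silence), but neither of your proposed repairs actually closes it. The covering-in-time argument has no uniform constant: a weight $\eta_\e$ equal to $1$ on $[\e,T-\e]$ and vanishing at the endpoints has $|\eta_\e'|\sim\e^{-1}$, and the extra term $\e^{-1}\mE\int_{((0,\e)\cup(T-\e,T))\times G}|z_t|\,|\nabla z|\,dxdt$ is not bounded, uniformly in $\e$, by $|(z_t,z)|^2_{L^2_\dbF(0,T;L^2(G)\times H^1(G))}$ when $z_t,\nabla z$ are merely $L^2$ in time; letting $\e\to0$ simply reproduces the endpoint energies you set out to avoid. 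With a single fixed cutoff you only obtain a time-weighted boundary estimate, which degenerates near $t=0,T$ and is weaker than \eqref{hidden ine}.

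Your second route, the energy identity for $\tfrac12\big(z_t^2+\sum_{i,j}b^{ij}z_iz_j\big)$, does control the endpoint energies, but you overlooked its boundary flux $2\mE\int_0^T\int_\G\sum_{i,j}b^{ij}z_i\nu^j z_t\,d\G dt$, whose leading part on $\G$ is $\big(\sum_{i,j}b^{ij}\nu^i\nu^j\big)\,\pa_\nu z\;h_{1,t}$ --- that is, it contains the very trace you are estimating, paired with the time derivative of the Dirichlet datum. It can be absorbed by Young's inequality into the left-hand side, but only at the cost of adding $|h_{1,t}|_{L^2_\dbF(0,T;L^2(\G))}$ (or, if one invokes Proposition \ref{energy ensi} instead, also $|h_2|_{L^2_\dbF(0,T;L^2(\G))}$) to the right-hand side, terms that do not appear in \eqref{hidden ine}. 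Such a weakened inequality would still suffice for the way the proposition is used in Theorem \ref{observability}, where $h_{1,t}$ and $h_2$ are present anyway, but it is not the stated estimate. So, as written, your argument proves either a weighted-in-time bound or an unweighted bound with an extra $h_{1,t}$ (resp.\ $h_2$) term; to match \eqref{hidden ine} literally you must explain how the endpoint terms are controlled using only $|(z_t,z)|_{L^2_\dbF(0,T;L^2(G)\times H^1(G))}$, $|h_1|_{L^2_\dbF(0,T;H^1(\G))}$ and $|f|_{L^2_\dbF(0,T;L^2(G))}$, which is exactly the step the paper's sketch also leaves unaddressed.
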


\begin{remark}
In \cite{Lu1}, the author proved
Proposition \ref{hidden r} in case that
$h_1=0$. The proof of Proposition \ref{hidden r}
for a general $h_1$ is
very similar. We only give a sketch of it here.
\end{remark}

\begin{proof} Since $\G\in C^2$, one can find a vector field
$\xi=(\xi^1,\cdots,\xi^n)\in
C^1(\mathbb{R}^n,\mathbb{R}^n)$ such that
$\xi=\nu$ on $\G$(see \cite[page 18]{Komornik}). By direct
computation, we can show that
\begin{equation}\label{equality hidden1}
\begin{array}{ll}
\ds \q \sum_{i=1}^n\Big[ 2(\xi\cd\nabla z)\sum_{j=1}^n b^{ij}z_{j} +
\xi^i\Big( z_t^2 - \sum_{j,k=1}^n
b^{jk}z_{j} z_{k} \Big) \Big]_{i}dt\\
\ns =  \ds -\Big\{2 \Big[\Big(dz_t - \sum_{i,j=1}^n
(b^{ij}z_{i})_{j}dt\Big) \xi \cd \nabla z -
d(z_t \xi\cd \nabla z)  -
\sum_{i,j,k=1}^n b^{ij}z_{i} z_{k}
\xi^k_{j}dt\Big] \\
\ns \ds\qq  - (\div \xi) z_t^2dt + \sum_{i,j=1}^n z_{j} z_{i}
\div(b^{ij}\xi)dt\Big\}.
\end{array}
\end{equation}

Let us analyze the terms in the left hand side of \eqref{equality hidden1}. Noting that $\nu\cd \mathbf{n}^j = \nu^j$ and $\xi=\nu$ on $\G$, we have that
\begin{equation}\label{equality hidden1-1}
\begin{array}{ll}
\ds \q  \mE\int_Q\sum_{i=1}^n\Big[ 2(\xi\cd\nabla z)\sum_{j=1}^n b^{ij}z_{j} \Big]_{i}dxdt \\
\ns\ds = 2 \mE\int_{\Si} \pa_\nu z\sum_{i,j=1}^n b^{ij} z_{j} \nu^i d\G dt \\
\ns\ds = 2 \mE\int_{\Si}\pa_\nu z\sum_{i,j=1}^n b^{ij} \nabla z\cd \mathbf{n}^j \nu^i d\G dt \\
\ns\ds = 2 \mE\int_{\Si} \pa_\nu z\sum_{i,j=1}^n b^{ij} \(\pa_\nu z \nu + \sum_{k=1}^{n-1}\pa_{\mathbf{T}^k}z\mathbf{T}^k \)\cd \mathbf{n}^j \nu^i d\G dt \\
\ns\ds = 2 \mE\int_{\Si}\(\sum_{i,j=1}^n b^{ij}  \nu^i\nu^j\) |\pa_\nu z|^2d\G dt + 2 \mE\int_{\Si} \pa_\nu z\sum_{i,j=1}^n b^{ij} \( \sum_{k=1}^{n-1}\pa_{\mathbf{T}^k}z\mathbf{T}^k \)\cd \mathbf{n}^j \nu^i d\G dt \\
\ns\ds \geq 2 \mE\int_{\Si}\(\sum_{i,j=1}^n b^{ij}  \nu^i\nu^j\) |\pa_\nu z|^2d\G dt - \e\mE\int_{\Si} |\pa_\nu z|^2d\G dt -  C|h_1|_{L^2_\dbF(0,T;H^1(\G))}^2,
\end{array}
\end{equation}
\begin{equation}\label{equality hidden1-2}
\ds \q  \mE\int_Q\sum_{i=1}^n\big(\xi^i z_t^2\big)_{i}dxdt  =  \mE\int_{\Si}\(\sum_{i=1}^n\xi^i\nu^i\)z_t^2d\G dt =  \mE\int_{\Si} z_t^2d\G dt,
\end{equation}
and
\begin{eqnarray}\label{equality hidden1-3}
&& \q
-\mE\int_Q\sum_{i=1}^n\Big(\xi^i\sum_{j,k=1}^n
b^{jk}z_{j} z_{k} \Big)_{i}dxdt \nonumber\\
&& =
-\mE\int_Q\sum_{k=1}^n\Big(\xi^k\sum_{i,j=1}^n
b^{ij}z_{i} z_{j} \Big)_{k}dxdt \nonumber\\
&& = -\mE\int_{\Si}\sum_{k=1}^n\xi^k\nu^k \!
\sum_{i,j=1}^n\!
b^{ij}\[\(\pa_\nu z \nu \!+\! \sum_{k=1}^{n-1}\pa_{\mathbf{T}^k}z\mathbf{T}^k \)\cd \mathbf{n}^i\]\! \[\(\pa_\nu z \nu \!+\! \sum_{k=1}^{n-1}\pa_{\mathbf{T}^k}z\mathbf{T}^k \)\cd \mathbf{n}^j \Big] d\G dt\nonumber\\
&& = -\mE\int_{\Si}\(\sum_{i,j=1}^n b^{ij}  \nu^i\nu^j\) |\pa_\nu z|^2d\G dt - 2\mE\int_{\Si} \sum_{i,j=1}^n\! b^{ij} \pa_\nu z \nu^i \( \sum_{k=1}^{n-1}\pa_{\mathbf{T}^k}z\mathbf{T}^k \)\cd \mathbf{n}^jd\G dt  \nonumber\\
&& \q -\mE\!\int_{\Si}\sum_{i,j=1}^n\!
b^{ij}\[\( \sum_{k=1}^{n-1}\pa_{\mathbf{T}^k}z\mathbf{T}^k \)\cd \mathbf{n}^i\]  \[\( \sum_{k=1}^{n-1}\pa_{\mathbf{T}^k}z\mathbf{T}^k \)\cd \mathbf{n}^j \Big] d\G dt\\
&& \geq - \mE\int_{\Si}  \(\sum_{i,j=1}^n b^{ij}
\nu^i\nu^j\) |\pa_\nu z|^2d\G dt -
\e\mE\int_{\Si}|\pa_\nu z|^2d\G dt
-C|h_1|_{L^2_\dbF(0,T;H^1(\G))}^2.\nonumber
\end{eqnarray}
Hence, by taking $\e=\frac{s_0}{4}$ we get that
\begin{equation}\label{equality hidden1-4}
\begin{array}{ll}
\ds \q  \mE\int_Q\sum_{i=1}^n\Big[ 2(\xi\cd\nabla z)\sum_{j=1}^n b^{ij}z_{j} +
\xi^i\Big( z_t^2 - \sum_{j,k=1}^n
b^{jk}z_{j} z_{k} \Big) \Big]_{i}dxdt \\
\ns\ds \geq  \mE\int_{\Si}\(\sum_{i,j=1}^n b^{ij}  \nu^i\nu^j\) |\pa_\nu z|^2d\G dt - 2\e\mE\int_{\Si} |\pa_\nu z|^2d\G dt -C|h_1|_{L^2_\dbF(0,T;H^1(\G))}^2 \\
\ns\ds \geq  \frac{s_0}{2}\mE\int_{\Si}  |\pa_\nu z|^2d\G dt  -C|h_1|_{L^2_\dbF(0,T;H^1(\G))}^2.
\end{array}
\end{equation}

At last, by integrating the right hand side of \eqref{equality hidden1}
in $Q$ and taking expectation in $\Om$, noting that $z$ solves \eqref{system1}, we get that
\begin{equation}\label{equality hidden1-1}
\begin{array}{ll}
\ds  \frac{s_0}{2}\mE\int_{\Si}|\pa_\nu z|^2d\G dt
-C|h_1|_{L^2_\dbF(0,T;H^1(\G))}^2 \leq  \ds C\mE\int_Q (|z_t|^2 +
|\nabla z|^2 + |z|^2)dxdt.
\end{array}
\end{equation}
This implies \eqref{hidden ine} immediately.\end{proof}

Further, we give an energy estimate for the equation
\eqref{system1}, which plays an important role in
the proof of the  observability estimate.

\medskip

\begin{proposition}\label{energy ensi}
For any $z$ solves the equation \eqref{system1}, 
there holds
\begin{equation}\label{en esti}
\begin{array}{ll}
\ds \cE_3(t) \3n&\ds \leq \ds  Ce^{C(r_1^2 +1)T}
\cE(s) + C (|h_1|^2_{L^2_\dbF(0,T;H^1(\G))}   +
|h_{1,t}|^2_{L^2_\dbF(0,T;L^2(\G))}\\ \ns&\ds \q
+ |h_{2}|^2_{L^2_\dbF(0,T;L^2(\G))} +
|f|^2_{L^2_\dbF(0,T;L^2(G))}),
 \end{array}
\end{equation}
for any $0\leq s,t\leq T$, where
$$
\cE_3(t) \= \mE\int_G\( |z_t(t,x)|^2 + |\nabla
z(t,x)|^2 +  |z(t,x)|^2\)
dx.
$$

\end{proposition}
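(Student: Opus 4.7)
The plan is to work with the natural weighted energy
\begin{equation*}
E(t) \= \mE\int_G \Big(|z_t(t,x)|^2 + \sum_{i,j=1}^n b^{ij}(x)z_i(t,x) z_j(t,x) + |z(t,x)|^2\Big)dx,
\end{equation*}
which is equivalent to $\cE_3(t)$ by the uniform ellipticity in part~2 of Condition~\ref{condition of d}. Applying It\^o's formula to $E(t)$ along \eqref{system1}, and using that $z$ has the classical time differential $dz = z_t\,dt$ (only $z_t$ carries It\^o noise), the contributions from $\sum b^{ij}z_iz_j$ and $|z|^2$ produce only ordinary time derivatives, whereas $d|z_t|^2$ yields the It\^o correction $|b_4z|^2\,dt$. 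Taking expectation annihilates the local martingale part.

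After integrating by parts in $x$, the volume piece coming from $\int_G z_t\sum_j(b^{ij}z_i)_j\,dx$ exactly cancels $\frac{1}{2}\frac{d}{dt}\int_G\sum b^{ij}z_iz_j\,dx$ by the symmetry $b^{ij}=b^{ji}$, leaving only a boundary integral. One thus obtains
\begin{equation*}
\frac{d}{dt}E(t) = 2\mE\int_\G z_t\sum_{i,j=1}^n b^{ij}z_i\nu^j\,d\G + 2\mE\int_G z_t\big(b_1z_t + b_2\cd\nabla z + b_3z + z + f\big)dx + \mE\int_G |b_4z|^2\,dx.
\end{equation*}
The interior integrals are handled by Cauchy--Schwarz and bounded by $C(r_1^2+1)E(t) + C|f(t)|^2_{L^2(G)}$; the $b_3z$ piece, with $b_3\in L^p(G)$ and $p\geq n$, is absorbed through H\"older's inequality and the Sobolev embedding of $H^1(G)$ into an appropriate $L^q(G)$.

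The main obstacle is the boundary integral. Since $z=h_1$ on $\G$, one has $z_t|_\G = h_{1,t}$, while the tangential components of $\nabla z$ on $\G$ are determined by $h_1$; combined with the hypothesis $\pa_\nu z = h_2$ and the decomposition $\nabla z|_\G = \pa_\nu z\,\nu + \sum_{k=1}^{n-1}\pa_{\mathbf{T}^k}z\,\mathbf{T}^k$, this yields
\begin{equation*}
\Big|\mE\int_\G z_t\sum_{i,j=1}^n b^{ij}z_i\nu^j\,d\G\Big| \leq C\big(|h_{1,t}(t)|^2_{L^2(\G)} + |h_1(t)|^2_{H^1(\G)} + |h_2(t)|^2_{L^2(\G)}\big).
\end{equation*}
Assembling these ingredients gives the two-sided inequality $\big|\tfrac{d}{dt}E(t)\big|\leq C(r_1^2+1)E(t) + g(t)$, where $g(t)$ is the sum of the squared boundary and source quantities at time $t$. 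Gronwall's lemma, applied forward for $t\geq s$ and in reverse for $t\leq s$, then yields $E(t) \leq e^{C(r_1^2+1)|t-s|}\big(E(s) + \int_0^T g(\tau)\,d\tau\big)$, which, after integrating the boundary and source data over $[0,T]$, is exactly \eqref{en esti}. The delicate aspect is that the regularity class $h_1\in C^1_\dbF(0,T;L^2(\G))\cap L^2_\dbF(0,T;H^1(\G))$ is precisely what is needed to absorb this boundary integral, which explains the particular combination of norms on the right-hand side of \eqref{en esti}.
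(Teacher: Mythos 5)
Your proposal is correct and follows essentially the same route as the paper: Itô's formula applied to $|z_t|^2$ (plus $|z|^2$ and the weighted gradient term $\sum_{i,j}b^{ij}z_iz_j$), cancellation of the interior terms after integration by parts using $b^{ij}=b^{ji}$, estimation of the resulting boundary integral through $h_{1,t}$, the tangential derivatives of $h_1$ and the normal datum $h_2$, absorption of the lower-order terms into $C(r_1^2+1)$ times the energy, and a (backward/forward) Gronwall argument. The only cosmetic differences are that you phrase the argument differentially rather than in integrated form between $s$ and $t$, and you invoke H\"older--Sobolev for the $b_3z$ term whereas the paper simply uses the $L^\infty$ bound encoded in $r_1$.
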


\medskip

\begin{proof} Without loss of generality, we
assume that $t \leq s$.

By means of It\^{o}'s formula, we have
$$
d(z^2_t) = 2z_t dz_t + (dz_t)^2,
$$
which implies that
\begin{equation}\label{en eq1}
\begin{array}{ll}
\ds \q\mE\int_G  \big(|z_t(s,x)|^2 + |z(s,x)|^2\big) dx -  \mE\int_G \big(|z_t(t,x)|^2 + |z(t,x)|^2\big) dx
\\
\ns\ds = 2\mE\int_t^s\int_\G \sum_{i,j=1}^n b^{ij}(x)z_i(\tau,x)\nu^j z_{t}(\tau,x)d\G d\tau -2\mE\int_t^s\int_G \sum_{i,j=1}^nb^{ij}(x)z_i(\tau,x) z_{jt}(\tau,x)dxd\tau
\\
\ns \ds \q  + 2\mE\!\int_t^s\!\int_G z_t(\tau,x)\(b_1(\tau,x) z_t(\tau,x)\!+\!
b_2(\tau,x)\!\cd\!\nabla z(\tau,x)\!  + \!b_3(\tau,x) z(\tau,x) \!+\! f(\tau,x) \)dxd\tau \\
\ns \ds \q + \mE\int_t^s\int_G |b_4(\tau,x)
z(\tau,x) |^2 dxd\tau + 2\mE\int_t^s\int_G
z_t(\tau,x)z(\tau,x)dxd\tau.
\end{array}
\end{equation}
Therefore, we obtain that
\begin{equation}\label{en eq2}
\begin{array}{ll}
\ds \q\mE\int_G  \(|z_t(s,x)|^2 + \sum_{i,j}^n b^{ij}(x)z_i(s,x)z_j(s,x) + |z(s,x)|^2 \)dx \\
\ns \ds \q  -  \mE\int_G  \(|z_t(t,x)|^2 + \sum_{i,j}^n b^{ij}(x)z_i(t,x)z_j(t,x) + |z(t,x)|^2 \) dx\\
\ns \ds = 2\mE\int_t^s\int_\G \sum_{i,j=1}^n b^{ij}(x)z_i(\tau,x)\nu^j z_{t}(\tau,x)d\G d\tau\\
\ns\ds\q + \mE\int_t^s\int_G z_t(\tau,x)\Big[b_1(\tau,x)
z_t(\tau,x)+
 b_2(\tau,x)\cd\nabla z(\tau,x)  + b_3(\tau,x) z(\tau,x) + f (\tau,x)\Big]dxd\tau \\
\ns  \ds \q + 2\mE\int_t^s\int_G \big[b_4(\tau,x) z(\tau,x)
 + g(\tau,x) \big]^2 dxd\tau+ 2 \mE\int_t^s\int_G z_t(\tau,x)z(\tau,x)dxd\tau\\
\ns\ds \leq  C (|h_1|^2_{L^2(0,T;H^1(\G))}  + |h_{1,t}|^2_{L^2(0,T;L^2(\G))} + |h_{2}|^2_{L^2(0,T;L^2(\G))} ) \\
\ns\ds \q+  C(r_1^2 + 1)\mE \int_t^s \int_G
\big[z_t(\tau,x)^2 + |\nabla z(\tau,x)|^2 +
z(\tau,x)^2\big]dxd\tau   \\
\ns \ds \q +  2 \mE \int_t^s \int_G
z_t(\tau,x)z(\tau,x)dxd\tau + 2\mE\int_t^s\int_G
f(\tau,x)^2  dxd\tau.
\end{array}
\end{equation}

From \eqref{en eq2} and the
property of $b^{ij}$($i,j=1,\cds,n$)(see
\eqref{bij}), we find that
\begin{equation}\label{en eq3}
\begin{array}{ll}
\ds  \cE_3(t) \3n&\ds  \leq C \Big[\cE(s) + (r_1^2
+1 ) \int_t^s \cE(\tau)d\tau +
\mE\int_t^s\int_G f^2 dxdt\\
\ns&\ds\qq +   |h_1|^2_{L^2_\dbF(0,T;H^1(\G))} +
|h_{1,t}|^2_{L^2_\dbF(0,T;L^2(\G))} +
|h_{2}|^2_{L^2_\dbF(0,T;L^2(\G))} \Big].
\end{array}
\end{equation}
This, together with backward Gronwall's inequality,
implies  the inequality \eqref{en esti}
immediately.
\end{proof}

Next,  we introduce the following known result, which plays a key
role in getting Theorem \ref{observability}.

\begin{lemma}\cite[Theorem 4.1]{Zhangxu3}\label{hyperbolic1}
Let $p^{ij} \in C^{1}((0,T)\t \mathbb{R}^n)$ satisfy
\begin{equation}
p^{ij} = p^{ji}, \qq i,j = 1,2,\cdots,n,\nonumber
\end{equation}
$l,\Psi \in C^2((0,T)\t\mathbb{R}^n)$. Assume
that $u$ is an $H^2_{loc}(\mathbb{R}^n)$-valued
and $\mathbb{F}$-adapted process such
that $u_t$ is an $L^2(\mathbb{R}^n)$-valued
semimartingale. Set $\theta = e^l$ and $v=\theta
u$. Then, for a.e. $x\in \mathbb{R}^n$ and $\mathbb{P}$-a.s.
$\om \in \Om$,
\begin{eqnarray}\label{hyperbolic2}
&\,&\q\theta \Big( -2\ell_t v_t +
2\sum_{i,j = 1}^{n}p^{ij}\ell_i v_j + \Psi v \Big)
\Big[ du_t - \sum_{i,j = 1}^{n}(p^{ij}u_i)_j dt \Big] \nonumber\\
&\,& \,\,\,\q+\sum_{i,j = 1}^{n}\Big[
\sum_{i',j'=1}^n\big(
2p^{ij}p^{i'j'}\ell_{i'}v_iv_{j'} -
p^{ij}p^{i'j'}\ell_iv_{i'}v_{j'}
\big) - 2p^{ij}\ell_t v_i v_t + p^{ij}\ell_i v_t^2 \nonumber\\
&\,& \qq \qq \qq \,\,\,\,+ \Psi p^{ij}v_i v - \Big( A\ell_i +
\frac{\Psi_i}{2}\Big)p^{ij}v^2 \Big]_j dt \\
&\,& \,\,\,\q +d\Big[ \sum_{i,j = 1}^{n}p^{ij}\ell_t
v_i v_j - 2\sum_{i,j = 1}^{n}p^{ij}\ell_iv_jv_t + \ell_t
v_t^2 - \Psi v_t v + \Big( A\ell_t +
\frac{\Psi_t}{2}\Big)v^2 \Big] \nonumber \\
&\,& = \Big\{ \Big[ \ell_{tt} + \sum_{i,j = 1}^{n}(p^{ij}\ell_i)_{j} - \Psi
\Big]v_t^2 - 2\sum_{i,j = 1}^{n}[(p^{ij}\ell_j)_t +
p^{ij}\ell_{tj}]v_iv_t \nonumber \\
&\,& \,\,\,\,\,\,\,\,+\sum_{i,j = 1}^{n} \Big[ (p^{ij}\ell_t)_t +
\sum_{i',j'=1}^n\Big(2p^{ij'}(p^{i'j}\ell_{i'})_{j'}-(p^{ij}p^{i'j'}\ell_{i'})_{j'}\Big)
+ \Psi p^{ij} \Big]v_iv_j \nonumber \\
&\,&\,\,\,\,\,\,\,\,+ Bv^2 + \Big( -2\ell_tv_t +
2\sum_{i,j = 1}^{n}p^{ij}\ell_iv_j + \Psi v \Big)^2\Big\} dt +
\theta^2\ell_t(du_t)^2, \nonumber
\end{eqnarray}
where $(du_t)^2$ denotes the quadratic
variation process of $u_t$, %
$$\ds
A\=(l_t^2 - \ell_{tt}) - \sum_{i,j =
1}^{n}(p^{ij}\ell_i\ell_j -p_j^{ij}\ell_i -
p^{ij}\ell_{ij})-\Psi, $$ $$B\=A\Psi +
(Al_t)_t-\sum_{i,j = 1}^{n}(Ap^{ij}\ell_i)_j +
\frac{1}{2}\Big[ \Psi_{tt} - \sum_{i,j =
1}^{n}(p^{ij}\Psi_i)_j \Big]. $$
\end{lemma}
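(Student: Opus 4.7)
The plan is to derive this pointwise identity by a weighted-multiplier calculation, following the classical deterministic blueprint (substitute $u=\theta^{-1}v$, multiply the equation by a first-order operator in $v$, and rearrange into a positive quadratic form plus divergences) while handling the stochastic part of $du_t$ by It\^{o}'s formula. Concretely, I would introduce $v=\theta u$ with $\theta=e^\ell$ and first record the basic relations
\begin{equation*}
u_i = \theta^{-1}(v_i-\ell_i v),\qquad dv_t = \theta\,du_t + \theta\ell_t\,v_t\,dt - \theta(\ell_{tt}+\ell_t^2)v\,dt + \theta\ell_t\,v_t\,dt,
\end{equation*}
together with the expansion of $(p^{ij}u_i)_j$ in terms of $v_j$, $v_{ij}$ and lower-order terms involving $\ell_i,\ell_{ij}$. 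This lets me rewrite
\begin{equation*}
\theta\Big[du_t-\sum_{i,j=1}^n(p^{ij}u_i)_j\,dt\Big] = I_1\,dt + I_2\,dt + dv_t - \text{(Itô correction)},
\end{equation*}
where $I_1$ contains the principal operator acting on $v$ and $I_2$ collects the lower-order terms produced by conjugation with $\theta$.

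Next, I would multiply this identity by the carefully chosen first-order multiplier
\begin{equation*}
\cM v \= -2\ell_t v_t + 2\sum_{i,j=1}^n p^{ij}\ell_i v_j + \Psi v,
\end{equation*}
and process each product separately. The three kinds of terms are: (i) purely time terms like $v_t\,dv_t$, which become $\tfrac12 d(v_t^2) - \tfrac12(dv_t)^2$ by It\^{o} (this is precisely where the quadratic-variation correction $\theta^2\ell_t(du_t)^2$ in the statement is born); (ii) mixed terms $v_j\,dv_t$, handled by $d(v_jv_t) = v_j\,dv_t + v_t\,dv_j$, with a subsequent spatial integration by parts (pointwise Leibniz) that produces a divergence in $j$; (iii) zero-order terms $v\,dv_t$, treated similarly. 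At the same time, the spatial multiplier $2\sum p^{ij}\ell_iv_j\cdot(p^{kl}v_l)_k$ is expanded via Leibniz so that all "exact divergence" pieces collect into the bracket $[\cdots]_j$ displayed in the lemma.

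After this bookkeeping, the remaining non-divergence, non-differential terms are quadratic in $(v_t,\nabla v,v)$, and I would then match their coefficients. The $v_t^2$-coefficient collapses to $\ell_{tt}+\sum(p^{ij}\ell_i)_j-\Psi$; the $v_iv_t$-coefficient collapses to $-2[(p^{ij}\ell_j)_t+p^{ij}\ell_{tj}]$; and the $v_iv_j$-coefficient, after symmetrizing in $(i,j,i',j')$, reduces to the expression inside the brackets that defines the principal form. The $v^2$-coefficient is repackaged into $B$ by identifying $A$ as the natural "conjugated zero-order" factor, computing $A\Psi + (A\ell_t)_t - \sum(Ap^{ij}\ell_i)_j$ plus the commutator $\tfrac12[\Psi_{tt}-\sum(p^{ij}\Psi_i)_j]$, and then adding and subtracting $(\cM v)^2$ to absorb all remaining cross terms into the perfect square appearing on the right-hand side.

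The main obstacle is purely algebraic: keeping track of the symmetrization $p^{ij}=p^{ji}$ and of the many lower-order terms generated when one pushes $\theta$ through the second-order operator, so that the final $v_iv_j$-coefficient can be written in the exact symmetric form $\sum_{i',j'}\!\bigl(2p^{ij'}(p^{i'j}\ell_{i'})_{j'} - (p^{ij}p^{i'j'}\ell_{i'})_{j'}\bigr)+\Psi p^{ij}$. The genuinely stochastic input appears only through the It\^{o} correction term $\theta^2\ell_t(du_t)^2$ coming from applying $d(v_t^2)=2v_t\,dv_t+(dv_t)^2$; once this single term is isolated, the rest of the calculation proceeds pointwise and $\dbP$-a.s. and reduces to the deterministic Carleman identity.
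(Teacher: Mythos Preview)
The paper does not prove this lemma at all: it is quoted verbatim as a known result from \cite{Zhangxu3} (Theorem~4.1 there) and then applied without any argument. Your outlined approach---conjugate by $\theta=e^\ell$, multiply by the first-order multiplier $-2\ell_t v_t+2\sum p^{ij}\ell_i v_j+\Psi v$, use It\^o's formula on the $v_t\,dv_t$ terms to generate the $\theta^2\ell_t(du_t)^2$ correction, and then rearrange the remaining quadratic form plus divergences---is exactly the standard derivation and is the method used in the original reference; so there is nothing to compare against in the present paper, and your plan is the right one. (One small slip: your displayed expression for $dv_t$ repeats the term $\theta\ell_t v_t\,dt$ and the signs need care, but this is a bookkeeping detail that does not affect the strategy.)
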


Now we turn to the proof of Theorem \ref{observability}. The proof is similar to the one for Theorem 1.1 in \cite{Lu1}, and the main difference is how to handle the boundary terms. Hence, we only give a sketch of the similar part and provide all details for the different part.

\begin{proof}[Proof of Theorem \ref{observability}] We divide the proof into four steps.

\vspace{0.2cm}

{\bf Step 1}.
 On one hand,
from Condition \ref{condition2}, we know that
there is an $\e_1\in (0,\frac12)$ such that
\begin{equation}\label{e1}
\ell(t,x)\leq \l\(\frac{R_1^2}{2} - \frac{cT^2}{8}\)
< 0,\q\forall\, (t,x)\in \[ \(0, \frac{T}{2}-\e_1
T\)\bigcup \(\frac{T}{2}-\e_1 T,T\)  \]\times G.
\end{equation}
On the other hand, since
$$
\ell\( \frac{T}{2},x\) = \f(x)\geq R_0^2, \qq \forall x\in G,
$$
we can find an $\e_0\in (0,\e_1)$ such that
\begin{equation}\label{e0}
\ell(t,x)\geq \frac{R_0^2}{2},\q\forall (t,x)\in  \(\frac{T}{2}-\e_0 T,\frac{T}{2}+\e_0 T\) \times G.
\end{equation}

Now we choose a $\chi\in C^\infty_0[0,T]$ satisfying
\begin{equation}\label{chi}
\chi=1 \mbox{ in } \(\frac{T}{2}-\e_1 T,\frac{T}{2}+\e_1 T\).
\end{equation}
 Let $y=\chi z$ for $z$ solving the equation \eqref{system1}, then we know that $y$ is a solution to the following equation:
\begin{eqnarray}{\label{system2}}
\left\{
\begin{array}{lll}\ds
\ds dy_{t} - \sum_{i,j = 1}^{n}(b^{ij}y_i)_{j}dt
= \Big[b_1 y_t + (b_2,\nabla y) + b_3 y + \chi f
+ \a \Big]dt +  b_4 y dW(t) & {\mbox { in }} Q,
 \\
\ns\ds  y = \chi h_1 & \mbox{ on } \Si, \\
\ns\ds  y(0) = y(T) = 0, \q y_{t}(0) = y_t(T) =
0 & \mbox{ in } G.
\end{array}
\right.
\end{eqnarray}
Here $\a = \chi_{tt}z + 2\chi_t z_t - b_1\chi_t
z$.

\vspace{0.2cm}

{\bf Step 2.} We apply Lemma \ref{hyperbolic1} to the solution of the
equation  \eqref{system2}. In the present case,
we choose
$$
p^{ij} = b^{ij}, \qq \Psi = \ell_{tt} +
\sum_{i,j = 1}^{n}(b^{ij}\ell_i)_{j} - \l c_0,
$$
and then
estimate the terms in \eqref{hyperbolic2} one by
one.

Similar to the proof of  \cite[Theorem 1.1]{Lu1}, we have that
\begin{equation}\label{coeffvt} \Big[\ell_{tt} +
\sum_{i,j = 1}^{n}(b^{ij}\ell_i)_{j} -\Psi\Big]v_{t}^2 = \l c_0 v_{t}^2,
\end{equation}
\begin{equation}\label{bcoeffvtvi}
\sum_{i,j = 1}^{n}\big[(b^{ij}\ell_j)_t +
b^{ij}\ell_{tj}\big]v_i v_t = 0,
\end{equation}
\begin{equation}\label{vivj}
\begin{array}{ll}\ds
\q\sum_{i,j = 1}^{n}\Big\{ (b^{ij}\ell_t)_t + \sum_{i',j'=1}^n \big[
2b^{ij'}(b^{i'j}\ell_{i'})_{j'} - (b^{ij}b^{i'j'}\ell_{i'})_{j'} \big] +
\Psi b^{ij} \Big\}v_i v_j \nonumber\\
\ns\ds \geq \l (\mu_0 -4c_1 -
c_0)\sum_{i,j = 1}^{n}b^{ij}v_i v_j,
\end{array}
\end{equation}
and
\begin{eqnarray}\label{6.20-eq1}
 B \3n&\geq& 2(4c_1+c_0)\l^3 (4R_1^2 - c_1^2T^2) + O(\l^2).
\end{eqnarray}
From \eqref{6.20-eq1},  we know that there exists a $\l_0 > 0$ such that for any $\l
\geq \l_0$,
\begin{equation}\label{B ine}
Bv^2 \geq 8c_1  (4R_1^2 - c_1^2T^2) \l^3v^2.
\end{equation}

Since
$$
v(0,x)=\theta(0,x)y(0,x) = 0
$$
and
$$
v_t(0,x)=\theta_t(0,x)y(0,x)+\th(0,x)
y_t(0,x)=0,
$$
we know that at time
$t=0$, it holds that
$$
\sum_{i,j = 1}^{n}b^{ij}\ell_t v_i v_j -
2\sum_{i,j = 1}^{n}b^{ij}\ell_iv_jv_t + \ell_t v_t^2
- \Psi v_t v + \Big( A\ell_t +
\frac{\Psi_t}{2}\Big)v^2 =0.
$$
Similarly,  at time $t=T$, it holds that
$$
\sum_{i,j = 1}^{n}b^{ij}\ell_t v_i v_j -
2\sum_{i,j = 1}^{n}b^{ij}\ell_iv_jv_t + \ell_t v_t^2
- \Psi v_t v + \Big( A\ell_t +
\frac{\Psi_t}{2}\Big)v^2 = 0.
$$

{\bf Step 3.} In this step, we handle the boundary terms. Noting that $$v_i=\(\pa_\nu v \nu + \sum_{k=1}^{n-1}\pa_{\mathbf{T}^k}v\mathbf{T}^k \)\cd \mathbf{n}^i,$$
we have that
\begin{eqnarray}\label{bhyperbolic32}
&\,&\q \Big|\mathbb{E}\int_{Q}\sum_{i,j = 1}^{n}\Big(
\sum_{i',j'=1}^n
2b^{ij}b^{i'j'}\ell_{i'}v_iv_{j'} \)_jdxdt\Big|\nonumber\\
&\,& =\Big|\mathbb{E}\int_{\Si}\sum_{i,j = 1}^{n}\sum_{i',j'=1}^n\Big(
2b^{ij}b^{i'j'}\f_{i'}v_i v_{j'}
\Big)\nu^j d\Si \Big|  \\
&\,&\ds = \Big|\mathbb{E}\int_{\Si}\sum_{i,j = 1}^{n}
\sum_{i',j'=1}^n\Big[
2b^{ij}b^{i'j'}\f_{i'}\(\pa_\nu v \nu + \sum_{k=1}^{n-1}\pa_{\mathbf{T}^k}v\mathbf{T}^k \)\cd \mathbf{n}^i\(\pa_\nu v \nu + \sum_{k=1}^{n-1}\pa_{\mathbf{T}^k}v\mathbf{T}^k \)\cd \mathbf{n}^{j'} \]
 \nu^j d\Si\Big|  \nonumber\\
&\,&\ds \leq C\l\(\mathbb{E}\int_{\Si}|\pa_\nu v|^2 d\Si + \mathbb{E}\int_{\Si}\Big|\sum_{k=1}^{n-1}\pa_{\mathbf{T}^k}v\mathbf{T}^k\Big|^2 d\Si\).\nonumber
\end{eqnarray}
Similarly,
\begin{equation}\label{6.20-eq2}
\begin{array}{ll}\ds
\Big|\mE\int_Q\sum_{i,j = 1}^{n}
\big(b^{ij}b^{i'j'}\ell_iv_{i'}v_{j'}
 \big)_j dxdt\Big|\\
\ns\ds  =\Big|\mE\int_\Si\sum_{i,j = 1}^{n}
\sum_{i',j'=1}^n\big(
b^{ij}b^{i'j'}\ell_iv_{i'}v_{j'}
\big) \nu^j d\Si\Big|\\
\ns\ds \leq C\l\(\mathbb{E}\int_{\Si}|\pa_\nu v|^2 d\Si + \mathbb{E}\int_{\Si}\Big|\sum_{k=1}^{n-1}\pa_{\mathbf{T}^k}v\mathbf{T}^k\Big|^2 d\Si\).
\end{array}
\end{equation}
Next,
\begin{equation}\label{6.20-eq3}
\begin{array}{ll}\ds
\Big|\mE\int_Q\sum_{i,j = 1}^{n}
 \big( -2b^{ij}\ell_t v_i v_t + b^{ij}\ell_i v_t^2  \big)_j dxdt\Big|\\
\ns\ds  =\Big|\mE\int_\Si\sum_{i,j = 1}^{n}
 \big( -2b^{ij}\ell_t v_i v_t + b^{ij}\ell_i v_t^2  \big)\nu^j d\Si\Big|\\
\ns\ds \leq \Big|\mE\int_\Si\sum_{i,j = 1}^{n}
 \[ -2b^{ij}\ell_t \(\pa_\nu v \nu + \sum_{k=1}^{n-1}\pa_{\mathbf{T}^k}v\mathbf{T}^k \)\cd \mathbf{n}^i v_t + b^{ij}\ell_i v_t^2  \]\nu^j d\Si\Big|\\
\ns\ds \leq C\l\(\mathbb{E}\int_{\Si}|\pa_\nu v|^2 d\Si + \mathbb{E}\int_{\Si}\Big|\sum_{k=1}^{n-1}\pa_{\mathbf{T}^k}v\mathbf{T}^k\Big|^2 d\Si + \mathbb{E}\int_{\Si}|v_t|^2 d\Si\)
\end{array}
\end{equation}
and
\begin{equation}\label{6.20-eq4}
\begin{array}{ll}\ds
\Big|\mE\int_Q\sum_{i,j = 1}^{n}\Big[
 \Psi b^{ij}v_i v - \Big( A\ell_i +
\frac{\Psi_i}{2}\Big)b^{ij}v^2 \Big]_j dxdt\Big|\\
\ns\ds  =\Big|\mE\int_\Si\sum_{i,j = 1}^{n}\Big[
 \Psi b^{ij}v_i v - \Big( A\ell_i +
\frac{\Psi_i}{2}\Big)b^{ij}v^2 \Big]\nu^j d\Si\Big|\\
\ns\ds \leq C\l\(\mathbb{E}\int_{\Si}|\pa_\nu v|^2 d\Si + \mathbb{E}\int_{\Si}\Big|\sum_{k=1}^{n-1}\pa_{\mathbf{T}^k}v\mathbf{T}^k\Big|^2 d\Si + \l^2\mathbb{E}\int_{\Si}|v|^2 d\Si\).
\end{array}
\end{equation}

From \eqref{bhyperbolic32} to \eqref{6.20-eq4}, we find that
\begin{equation}\label{6.20-eq5}
\begin{array}{ll}\ds
\Big|\mE\int_Q\sum_{i,j = 1}^{n}\Big[
\sum_{i',j'=1}^n\big(
2b^{ij}b^{i'j'}\ell_{i'}v_iv_{j'} -
b^{ij}b^{i'j'}\ell_iv_{i'}v_{j'}
\big) - 2b^{ij}\ell_t v_i v_t + b^{ij}\ell_i v_t^2 \\
\ns\ds \qq \qq \qq \,\,\,\,+ \Psi b^{ij}v_i v - \Big( A\ell_i +
\frac{\Psi_i}{2}\Big)b^{ij}v^2 \Big]_j dxdt\Big|\\
\ns\ds \leq C\l\(\mathbb{E}\int_{\Si}|\pa_\nu v|^2 d\Si + \mathbb{E}\int_{\Si}\Big|\sum_{k=1}^{n-1}\pa_{\mathbf{T}^k}v\mathbf{T}^k\Big|^2 d\Si+ \mathbb{E}\int_{\Si}|v_t|^2 d\Si + \l^2\mathbb{E}\int_{\Si}|v|^2 d\Si\).
\end{array}
\end{equation}
Noting that $\pa_\nu v = (\pa_\nu \th)y + \th \pa_\nu y$, we have that
\begin{equation}\label{6.20-eq6}
\begin{array}{ll}\ds
\mathbb{E}\int_{\Si}|\pa_\nu v|^2 d\Si\3n&\ds = \mathbb{E}\int_{\Si}|(\pa_\nu \th)y + \th \pa_\nu y|^2 d\Si\\
\ns&\ds \leq C\(\mathbb{E}\int_{\Si}\th^2|\pa_\nu y|^2 d\Si +  \l^2\mathbb{E}\int_{\Si}\th^2| y|^2 d\Si\).
\end{array}
\end{equation}
Similarly, from  $\pa_{\mathbf{T}^k}v  = (\pa_{\mathbf{T}^k} \th)y + \th \pa_{\mathbf{T}^k} y$, we obtain that
\begin{equation}\label{6.20-eq7}
\begin{array}{ll}\ds
\mathbb{E}\int_{\Si}\Big|\sum_{k=1}^{n-1}\pa_{\mathbf{T}^k}v\mathbf{T}^k\Big|^2 d\Si\3n&\ds = \mathbb{E}\int_{\Si}\Big|\sum_{k=1}^{n-1}\big[(\pa_{\mathbf{T}^k} \th)y + \th \pa_{\mathbf{T}^k} y\big]\mathbf{T}^k\Big|^2 d\Si\\
\ns&\ds \leq C\(\mathbb{E}\int_{\Si}\th^2\Big|\sum_{k=1}^{n-1}\pa_{\mathbf{T}^k}y\mathbf{T}^k\Big|^2 d\Si +  \l^2\mathbb{E}\int_{\Si}\th^2| y|^2 d\Si\).
\end{array}
\end{equation}
Next,
\begin{equation}\label{6.20-eq8}
\begin{array}{ll}\ds
\mathbb{E}\int_{\Si}|v_t|^2 d\Si = \mathbb{E}\int_{\Si}| \th_ty + \th  y_t|^2 d\Si \leq C\(\mathbb{E}\int_{\Si}\th^2|y_t|^2 d\Si +  \l^2\mathbb{E}\int_{\Si}\th^2| y|^2 d\Si\).
\end{array}
\end{equation}

Combining \eqref{6.20-eq6}--\eqref{6.20-eq8}, we find that
\begin{equation}\label{6.20-eq9}
\begin{array}{ll}\ds
 \(\mathbb{E}\int_{\Si}|\pa_\nu v|^2 d\Si + \mathbb{E}\int_{\Si}\Big|\sum_{k=1}^{n-1}\pa_{\mathbf{T}^k}v\mathbf{T}^k\Big|^2 d\Si+ \mathbb{E}\int_{\Si}|v_t|^2 d\Si + \l^2\mathbb{E}\int_{\Si}|v|^2 d\Si\)\\
\ns\ds \leq C \[\mathbb{E}\int_{\Si}\th^2\(|\pa_\nu y|^2+\Big|\sum_{k=1}^{n-1}\pa_{\mathbf{T}^k}y\mathbf{T}^k\Big|^2  +  |y_t|^2  + \l^2 |y|^2\) d\Si\].
\end{array}
\end{equation}

{\bf Step 4.} Integrating
(\ref{hyperbolic2}) in $Q$, taking
expectation in $\Om$ and by the argument
above, we know that there exists two constants $C_1, C_2>0$ such that
\begin{equation}\label{bhyperbolic3}
\begin{array}{ll}\ds
 \q\mathbb{E}\int_Q \theta\Big\{\Big( -2\ell_t v_t +
2\sum_{i,j = 1}^{n}b^{ij}\ell_iv_j + \Psi v \Big)
\Big[ dy_t - \sum_{i,j = 1}^{n}(b^{ij}y_i)_jdt \Big]
- \theta \ell_t (dy_t)^2\Big\}dx  \\
\ns\ds\,\,\,\,\,\,\, + C_1\l \mathbb{E}
\int_{\Si}\th^2\(|\pa_\nu
y|^2+\Big|\sum_{k=1}^{n-1}\pa_{\mathbf{T}^k}y
\mathbf{T}^k\Big|^2 +  |y_t|^2  + \l^2 |y|^2\)
d\Si
\\
\ns\ds \geq C_2 \mathbb{E}\int_Q
\theta^2\Big[\big( \l  v_t^2 + \l
|\nabla v|^2  \big) +   \l^3 v^2 \]dxdt
+ \mathbb{E}\int_Q\Big(-2\ell_tv_t +
2\sum_{i,j = 1}^{n}b^{ij}\ell_iv_j + \Psi v\Big)^2
dxdt.
\end{array}
\end{equation}
Since  $y$ solves the equation \eqref{system2},  we know that
\begin{eqnarray}\label{bhyperbolic4}
&\,&\q\mathbb{E}\int_Q \theta\Big\{\Big(
-2l_t v_t + 2\sum_{i,j = 1}^{n}b^{ij}\ell_iv_j +
\Psi v \Big) \Big[ dy_t -
\sum_{i,j = 1}^{n}(b^{ij}y_i)_jdt \Big] - \theta \ell_t (dy_t)^2\Big\}dx   \nonumber\\
&\,&\ds= \mathbb{E}\int_Q
\theta\Big\{\Big( -2\ell_t v_t +
2\sum_{i,j = 1}^{n}b^{ij}\ell_iv_j + \Psi v \Big)
\big[ b_1 y_t +  b_2\cd\nabla y + b_3 y
+ \chi  f + \a\big]
\\ &\,&\ds\qq\qq - \theta \ell_t |b_4 y|^2\Big\}dxdt
\nonumber
\\
&\,&\ds\leq  C\Big[ \mathbb{E}\int_Q \theta^2
\( b_1y_t + b_2\cd\nabla y + b_3 y + \chi f +
\a \)^2 + \l \theta^2|b_4 y|^2 \Big]dxdt
\nonumber
\\
&\,&\ds \qq +\mathbb{E}\int_Q\(-2\ell_t v_t
+ \sum_{i,j = 1}^{n}b^{ij}\ell_i v_j +
\Psi v \)^2dxdt  \nonumber\\
&\,&\ds \leq C\bigg\{\mathbb{E}\int_Q
\theta^2(f^2 + \a^2 )dxdt +
|b_1|^2_{L^{\infty}_{\dbF}(0,T;L^{\infty}(G))}
\mathbb{E}\int_Q \theta^2 y_t^2 dxdt  \nonumber
\\
&\,&\ds \qq\;\, +
|b_2|^2_{L^{\infty}_{\dbF}(0,T;L^{\infty}(G,\mathbb{R}^n))}  \mathbb{E}\int_Q\theta^2
|\nabla y|^2dxdt +  \l
|(b_3,b_4)|_{L^{\infty}_{\dbF}(0,T;L^{\infty}(G))^2}^2\mathbb{E}\int_Q
\theta^2 y^2 dxdt   \bigg\}\nonumber
 \\
&\,&\ds \q +\mathbb{E}\int_Q\(-2\ell_t v_t +
\sum_{i,j = 1}^{n}b^{ij}\ell_i v_j + \Psi v
\)^2dxdt.\nonumber
\end{eqnarray}

Taking $\l_2 = C(r_1^2 +1)\geq\max\{\l_0,\l_1\}$,
combining \eqref{bhyperbolic3} and
\eqref{bhyperbolic4},
for any $\l \geq \l_2$, we have that
\begin{equation}\label{bhyperbolic5}
\begin{array}{ll}
\ds\q  C \l \mathbb{E}\int_{\Si}\th^2\(|\pa_\nu y|^2+\Big|\sum_{k=1}^{n-1}\pa_{\mathbf{T}^k}y\mathbf{T}^k\Big|^2  +  |y_t|^2  + \l^2 |y|^2\) d\Si
+ C\mathbb{E}\int_Q \theta^2(f^2 + \a^2 )dxdt \\
\ns\ds \geq \mathbb{E}\int_Q \theta^2 \(  \l  y_t^2 + \l
|\nabla y|^2   +   \l^3 y^2  \)dxdt.
\end{array}
\end{equation}
Recalling the property of $\chi$(see \eqref{chi}) and $y=\chi z$, from \eqref{bhyperbolic5}, we find
\begin{equation}\label{bhyperbolic6}
\begin{array}{ll}
\ds\q  C \l \mathbb{E}\int_{\Si}\th^2\(|\pa_\nu y|^2+\Big|\sum_{k=1}^{n-1}\pa_{\mathbf{T}^k}y\mathbf{T}^k\Big|^2  +  |y_t|^2  + \l^2 |y|^2\) d\Si \\
\ns\ds \q  + C\mathbb{E}\int_Q \theta^2 f^2 dxdt + C(r_1 + 1)\[\mathbb{E}\int_{0}^{\frac{T}{2}-\e_1T} \int_G \theta^2 (z_t^2  + |\nabla z|^2  +  z^2)dxdt\\
\ns\ds \qq  +
\mathbb{E}\int_{\frac{T}{2}+\e_1T}^{ T}\int_G
\theta^2  \big(z_t^2 + |\nabla z|^2 +
z^2\big)dxdt\]
\\
\ns\ds \geq \mathbb{E}\int_{\frac{T}{2}-\e_0
T}^{\frac{T}{2}+\e_0 T}\int_G \theta^2 \big(  \l
z_t^2 + \l |\nabla z|^2   +   \l^3  z^2
\big)dxdt.
\end{array}
\end{equation}
From \eqref{bhyperbolic6} and \eqref{en esti}, we know that there is a $\l_3=C(r_1^2 +1)\geq \l_2$ such that for all $\l\geq \l_3$, it holds that
\begin{equation}\label{bhyperbolic7}
\begin{array}{ll}
\ds\q  C \l\mathbb{E}\int_{\Si}
\th^2\(|\pa_\nu y|^2+\Big|\sum_{k=1}^{n-1}\pa_{\mathbf{T}^k}y\mathbf{T}^k\Big|^2  +  |y_t|^2  + \l^2 |y|^2\) d\Si+ C\mathbb{E}\int_Q \theta^2 f^2 dxdt \\
\ns\ds \geq
e^{-\frac{T^2}{4}\l_3}\mathbb{E}\int_Q \theta^2
\big( z_t^2 + |\nabla z|^2   +    z^2
\big)dxdt.
\end{array}
\end{equation}
Taking $\l=\l_3$, we obtain that
\begin{equation}\label{bhyperbolic8}
\begin{array}{ll}
\ds\q  C e^{\l_3
R_1^2}\big(|h_1|^2_{L^2_\dbF(0,T;H^1(\G))} +
|h_{1,t}|^2_{L^2_\dbF(0,T;L^2(\G))} +
|h_{2}|^2_{L^2_\dbF(0,T;L^2(\G))} )+
|f|^2_{L^2_\dbF(0,T;L^2(G))}
\big) \\
\ns\ds \geq e^{-\frac{T^2}{4}\l_3}\mathbb{E}
\int_G  \big( z_1^2 + |\nabla z_0|^2
\big)dxdt.
\end{array}
\end{equation}
This leads to the inequality \eqref{obser esti2} immediately. \end{proof}

\section{Regularization}

\label{sec:6.3}

Recall that we need to find an approximate
solution  $u$ of \textbf{Problem (C)}.

Put
$$
\cH_T\deq \{u\in
L^{2}_\dbF\left(0,T;H^2(G)\right)\cap
H^{1}_\dbF\left(0,T;H^1(G)\right)\}.
$$
Clearly, $\cH_T$ is a Hilbert space with the
canonical norm.

Define an operator $L$ on
$L^{2}_\dbF\left(0,T;H^2(G)\right)\cap
H^{1}_\dbF\left(0,T;H^1(G)\right)$ as
\begin{equation}\label{6.20-eq10}
\begin{array}{ll}\ds
L\rho(t)\3n&\ds \= \rho_t(t) - \rho_t(0) -
\int_0^t\Big[\sum_{i,j =
1}^{n}(b^{ij}z_i)_{j}  + \big(b_1 z_t +
 b_2\cd\nabla z  + b_3 z  \big)\Big]dt\\
\ns&\ds\q - \int_0^t   b_4 z dW(t), \qq
\forall \rho\in
L^{2}_\dbF\left(0,T;H^2(G)\right)\cap
H^{1}_\dbF\left(0,T;H^1(G)\right).
\end{array}
\end{equation}

Put
\begin{equation}\label{6.27}
\cA_{ad}\deq\{\rho\in \cH_T:\, L\rho\in
L^2_\dbF(\Om;H^1(0,T;L^2(G))),\; \rho\mid
_{\Si}=h_{1},\;
\partial _{\nu}\rho\mid _{\Si}=h_{2}\}.
\end{equation}
Clearly, $\cA_{ad}$ is a Hilbert space with the
norm
$$
|\rho|_{\cA_{ad}}\deq
\big(|\rho|^2_{L^{2}_\dbF\left(0,T;H^2(G)\right)}
+ |\rho|^2_{H^{1}_\dbF\left(0,T;H^1(G)\right)} +
|L\rho|^2_{L^2_\dbF(\Om;H^1(0,T;L^2(G)))}
\big)^{\frac{1}{2}}.
$$

Let
$$
\mathbf{I}f=\int_0^t f(s)ds.
$$

Let us find an approximate solution of
\textbf{Problem (C)} via the minimization of the
following Tikhonov functional
\begin{eqnarray}\label{6.28}
\cJ_{\gamma }\left( u\right) = |L u-\mathbf{I}f
|_{L^2_\dbF(\Om;H^1(0,T;L^2(G)))} ^{2}+\gamma
|u|_{\cH_T}^{2},\qq u\in \cA_{ad}.
\end{eqnarray}

\begin{theorem}[Existence and uniqueness of the minimizer]\label{th2}
For every $\gamma \in \left( 0,1\right)$ there
exists a unique minimizer $u_{\gamma }\in
\cA_{ad}$ of the functional \eqref{6.28} and the
following estimate holds
\begin{equation}\label{4.12}
|u_{\gamma }|_{\cA_{ad}} \leq \frac{C
}{\sqrt{\gamma }} |f|_{L^{2}_\dbF(0,T;L^2(G))}.
\end{equation}
\end{theorem}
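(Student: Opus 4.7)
The plan is to recast the problem as the minimization of a strictly convex, coercive quadratic functional over an affine closed subspace of a Hilbert space, then invoke the Riesz representation theorem (equivalently, Lax--Milgram). Fix any reference element $\bar u\in \cA_{ad}$; the existence of such a $\bar u$ follows from standard lifting arguments since $h_1,h_2$ are assumed sufficiently regular for the Cauchy data to be admissible. Any $u\in \cA_{ad}$ can then be written as $u=\bar u+w$, where $w$ lies in the closed linear subspace
\begin{equation*}
V \deq \{w\in \cH_T:\; Lw\in L^2_\dbF(\Om;H^1(0,T;L^2(G))),\; w\mid_\Si = 0,\; \pa_\nu w\mid_\Si = 0\}.
\end{equation*}
With the graph inner product inherited from $\cA_{ad}$, the space $V$ is itself Hilbert, and $\cA_{ad}=\bar u+V$.

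Expanding $\cJ_\g(\bar u+w)$ and isolating the $w$-dependent part, one obtains
\begin{equation*}
\cJ_\g(\bar u+w) = a_\g(w,w) - 2\ell(w) + \cJ_\g(\bar u),
\end{equation*}
where
\begin{equation*}
a_\g(w,\eta) \deq \langle Lw,L\eta\rangle_{L^2_\dbF(\Om;H^1(0,T;L^2(G)))} + \g\,\langle w,\eta\rangle_{\cH_T},
\end{equation*}
and $\ell$ is a bounded linear functional on $V$ (by Cauchy--Schwarz). The form $a_\g$ is clearly symmetric and bounded; coercivity is the decisive point and is immediate here, because $\g\langle w,w\rangle_{\cH_T}$ controls $|w|^2_{\cH_T}$ while $\langle Lw,Lw\rangle$ controls $|Lw|^2_{L^2_\dbF(\Om;H^1(0,T;L^2(G)))}$, so $a_\g(w,w)\ge c(\g)|w|^2_{\cA_{ad}}$ for some $c(\g)>0$. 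Lax--Milgram (or the Riesz representation theorem, since $a_\g$ is symmetric) then produces a unique $w_\g\in V$ with $a_\g(w_\g,\eta)=\ell(\eta)$ for all $\eta\in V$, and $u_\g\deq \bar u+w_\g$ is the desired unique minimizer in $\cA_{ad}$.

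The main remaining task, and the only nontrivial estimate, is the bound \eqref{4.12}. The cleanest route is to combine the minimality inequality $\cJ_\g(u_\g)\le \cJ_\g(\bar u)$ with the two lower bounds $\cJ_\g(u_\g)\ge \g|u_\g|^2_{\cH_T}$ and $\cJ_\g(u_\g)\ge |Lu_\g-\mathbf{I}f|^2_{L^2_\dbF(\Om;H^1(0,T;L^2(G)))}$. The first yields $|u_\g|^2_{\cH_T}=O(\g^{-1})$ in terms of $|\mathbf{I}f|$ and the fixed quantity $|\bar u|_{\cA_{ad}}$; the second, together with the triangle inequality $|Lu_\g|\le |Lu_\g-\mathbf{I}f|+|\mathbf{I}f|$, bounds $|Lu_\g|^2$ in the same data. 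Using the elementary inequality $|\mathbf{I}f|_{L^2_\dbF(\Om;H^1(0,T;L^2(G)))}\le C|f|_{L^2_\dbF(0,T;L^2(G))}$ coming from the definition of $\mathbf{I}f=\int_0^{\cdot}f(s)ds$, and absorbing the reference-lifting contribution $|\bar u|_{\cA_{ad}}$ (which depends only on the fixed data $h_1,h_2$) into the constant $C$, summing the two estimates gives $|u_\g|^2_{\cA_{ad}}\le C\g^{-1}|f|^2_{L^2_\dbF(0,T;L^2(G))}$, which is exactly \eqref{4.12}.
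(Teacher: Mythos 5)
Your existence-and-uniqueness argument is correct and, on that half, more careful than the paper's: you split $\cA_{ad}=\bar u+V$ and run Lax--Milgram on the homogeneous space $V$, whereas the paper treats $\cA_{ad}$ itself as a Hilbert space, derives the variational identity \eqref{2.109} with test directions ranging over all of $\cA_{ad}$, introduces the weighted inner product \eqref{2.110}, and applies the Riesz representation theorem in that inner product. The genuine gap is in your derivation of \eqref{4.12}. The comparison $\cJ_{\gamma}(u_{\gamma})\leq \cJ_{\gamma}(\bar u)$ only gives
\begin{equation*}
|u_{\gamma}|_{\cA_{ad}}^{2}\;\leq\; \frac{C}{\gamma}\Big(|f|^{2}_{L^{2}_{\dbF}(0,T;L^{2}(G))}+|L\bar u|^{2}_{L^{2}_{\dbF}(\Om;H^{1}(0,T;L^{2}(G)))}+|\bar u|^{2}_{\cH_T}\Big),
\end{equation*}
and the lifting terms depend on $h_{1},h_{2}$, not on $f$. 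Absorbing them into $C$ is not legitimate: in \eqref{4.12} the right-hand side is proportional to $|f|$ alone, so $C$ must be independent of the data; with a data-dependent "constant" the inequality loses its content (for instance, letting $f\to 0$ with $h_{1},h_{2}$ fixed, \eqref{4.12} would force $u_{\gamma}\to 0$, which your bound does not give --- and cannot give, since in your own affine framework no element of $\bar u+V$ is small when the Cauchy data are not). So what you have proved is a weaker, $(h_{1},h_{2})$-dependent estimate, not \eqref{4.12}.

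The discrepancy traces back to which Euler--Lagrange identity is available. Because you (correctly, for an affine constraint set) differentiate only in directions $\rho\in V$, you may not take $\rho=u_{\gamma}$ as a test function, and that is precisely why the boundary lifting survives in your bound. The paper instead tests \eqref{2.109} with arbitrary $\rho\in\cA_{ad}$, rewrites it as $\lan u_{\gamma},\rho\ran_{\gamma}=\lan L\rho,\mathbf{I}f\ran_{L^2_\dbF(\Om;H^1(0,T;L^2(G)))}$ as in \eqref{2.112}, bounds the right-hand side by $C|f|_{L^{2}_{\dbF}(0,T;L^{2}(G))}|\rho|_{\gamma}$ using $|L\rho|\leq|\rho|_{\gamma}$ and $|\mathbf{I}f|\leq C|f|$, and reads off $|u_{\gamma}|_{\gamma}\leq C|f|$ directly from the Riesz representation, the factor $\gamma^{-1/2}$ coming from the norm equivalence \eqref{2.111}; this device implicitly treats $\cA_{ad}$ as a linear space (i.e.\ as if the Cauchy data entered homogeneously). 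To close your argument you must either adopt that identity (and justify testing with $u_{\gamma}$), or state the estimate with the additional $h_{1},h_{2}$-dependent terms on the right-hand side; as written, the last step of your proposal does not yield \eqref{4.12}. A smaller point: the existence of a lifting $\bar u\in\cA_{ad}$ with $\pa_{\nu}\bar u|_{\Si}=h_{2}$ merely in $L^{2}_{\dbF}(0,T;L^{2}(\G))$ and $\bar u\in L^{2}_{\dbF}(0,T;H^{2}(G))$ is not a standard trace-lifting fact and deserves justification (the paper sidesteps this by assuming, in effect, that $\cA_{ad}$ is nonempty).
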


\begin{proof} Let $u_{\gamma }$ be a minimizer of the functional
\eqref{6.28}. Then for any $\rho \in \cA_{ad}$,
$$
\lim_{\e\to 0}\frac{\cJ_{\gamma }(u_\g +
\e\rho)-\cJ_{\gamma }( u_\g)}{\e}=0,
$$
which implies that
\begin{equation}\label{2.109}
\begin{array}{ll}\ds
\lan Lu_{\gamma
},L\rho\ran_{L^2_\dbF(\Om;H^1(0,T; L^2(G)))}
+\gamma \lan u_{\gamma },\rho\ran_{\cH_T} =\lan
L\rho, \mathbf{I}f  \ran_{L^2_\dbF(\Om;H^1(0,T;
L^2(G)))},\q \forall \rho\in \cA_{ad}.
\end{array}
\end{equation}
Let
\begin{equation}\label{2.110}
\begin{array}{ll}\ds
\lan u,\rho\ran_{\gamma }\deq\lan
Lu,L\rho\ran_{L^2_\dbF(\Om;H^1(0,T; L^2(G)))}
+\gamma \lan  u,\rho\ran_{\cH_T},  \qq \forall
u,\rho\in \cA_{ad}.
\end{array}
\end{equation}
Then, $\lan u,\rho\ran_{\gamma }$ defines a new
scalar product in the Hilbert space $\cA_{ad}$
and the corresponding norm $|u|_\g$ satisfies
\begin{equation}\label{2.111}
\sqrt{\gamma }|u|_{\cA_{ad}}\leq |u|_{\gamma
}\leq C|u|_{\cA_{ad}}.
\end{equation}
Thus, the  norm $|u|_{\gamma}$ is equivalent to
the norm $|u|_{\cA_{ad}}$. Hence, \eqref{2.109}
can be rewritten as
\begin{equation}\label{2.112}
\begin{array}{ll}\ds
\lan u_{\gamma },\rho\ran_{\gamma}=\lan L\rho,
\mathbf{I}f  \ran_{L^2_\dbF(\Om;H^1(0,T;
L^2(G)))},\q \forall \rho\in \cA_{ad}.
\end{array}
\end{equation}
It follows from (\ref{2.111}) that
\begin{equation}\label{2.113}
\begin{array}{ll}\ds
\big|\lan L\rho, \mathbf{I}f
\ran_{L^2_\dbF(\Om;H^1(0,T; L^2(G)))}\big|  \leq
C |f|_{L_{\dbF}^2(0,T;L^2(G))} |\rho|_{\gamma },
\end{array}
\end{equation}
which yields the right hand side of
(\ref{2.112}) is a bounded linear functional on
$\cA_{ad}$. By Riesz representation theorem,
there exists an element $w_{\gamma }\in
\cA_{ad}$ such that
$$
\begin{array}{ll}\ds
\lan L\rho, \mathbf{I}f
\ran_{L^2_\dbF(\Om;H^1(0,T; L^2(G)))} =\lan
w_{\gamma },\rho\ran_{\gamma }, \qq\forall
\rho\in \cA_{ad}.
\end{array}
$$
This, together  with \eqref{2.112},  implies
that
$$
\lan u_{\gamma },\rho\ran_{\gamma }=\lan
w_{\gamma },\rho\ran_{\gamma },\qq\forall
\rho\in \cA_{ad}.
$$
Hence, the minimizer  $ u_{\gamma }=w_{\gamma
}.$ Also, by Riesz  representation theorem and
(\ref{2.113}), we find that
$$
|u_{\gamma }| _{\gamma }\leq C
|f|_{L_{\dbF}^2(0,T;L^2(G))}.
$$
Hence, the minimizer $u_{\gamma }$ is unique and
the left inequality in \eqref{2.111} implies
\eqref{4.12}.
\end{proof}

To estimate the convergence rate of minimizers,
we assume that there exists the exact solution
$u^{\ast }\in \cA_{ad}$ of \textbf{Problem (C)}
with the exact right hand side $f^{\ast }$ in
\eqref{system1} and exact lateral Cauchy data
$h_{1}^{\ast }$ and $h_{2}^{\ast }$.

\begin{theorem}[Convergence rate]\label{th3}
For  $\gamma \in \left( 0,1\right) $, let
$u_{\gamma }\in \cH_T$  be the unique minimizer
of the functional (\ref{6.28}), which is
guaranteed by Theorem \ref{th2}. Suppose that
Conditions  \ref{condition of d} and
\ref{condition2} hold.  Then  the
following estimate holds
\begin{equation}\label{6.31}
\begin{array}{ll}\ds
|u_{\gamma }-u^{\ast
}|_{H^1_\dbF(0,T;L^2(G))\cap
L^2_\dbF(0,T;H^1(G))}
\\
\ns\ds \leq C \big(|h_{1}-h_{1}^{\ast
}|_{L^2_\dbF(0,T;H^1(\G))
}+|h_{1,t}-h_{1,t}^{\ast
}|_{L^2_\dbF(0,T;L^2(\G)) }+|h_{2}-h_{2}^{\ast
}|
_{L^2_\dbF(0,T;L^2(\G)) }\\
\ns\ds\qq +| f-f^{\ast }|_{L^2_\dbF(0,T;L^2(G))}
+\sqrt{\gamma }|u^{\ast }|_{\cH_T} \big).
\end{array}
\end{equation}
%
%where $u_{\gamma }$ is the minimizer of the functional (\ref{6.28}), which is guaranteed by
%Theorem \ref{th2}.
\end{theorem}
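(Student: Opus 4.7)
The plan is to combine the variational optimality of $u_{\gamma}$ with the observability estimate of Theorem~\ref{observability} applied to the difference $w := u_{\gamma}-u^{\ast}$. The first obstacle is that $u^{\ast}\notin \cA_{ad}$ in general, since $u^{\ast}$ carries the exact boundary data $(h_{1}^{\ast},h_{2}^{\ast})$ rather than the measured data $(h_{1},h_{2})$. To bypass this, I would introduce a deterministic (or $\dbF$-adapted) lift $\delta\in \cH_{T}$ of the boundary mismatch, satisfying $\delta|_{\Si}=h_{1}-h_{1}^{\ast}$ and $\partial_{\nu}\delta|_{\Si}=h_{2}-h_{2}^{\ast}$, built from the $C^{2}$-regularity of $\G$ and the trace/extension theorems, so that
\[
|\delta|_{\cH_{T}}+|L\delta|_{L^{2}_{\dbF}(\Om;H^{1}(0,T;L^{2}(G)))}\leq C\,\eta,
\]
where $\eta:=|h_{1}-h_{1}^{\ast}|_{L^{2}_{\dbF}(0,T;H^{1}(\G))}+|h_{1,t}-h_{1,t}^{\ast}|_{L^{2}_{\dbF}(0,T;L^{2}(\G))}+|h_{2}-h_{2}^{\ast}|_{L^{2}_{\dbF}(0,T;L^{2}(\G))}$. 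Then $u^{\ast}+\delta\in \cA_{ad}$ is a legitimate competitor.

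Next, I would exploit the fact that $Lu^{\ast}=\mathbf{I}f^{\ast}$ (since $u^{\ast}$ solves \eqref{system1} with $f^{\ast}$) to expand
\[
L(u^{\ast}+\delta)-\mathbf{I}f \;=\; L\delta - \mathbf{I}(f-f^{\ast}).
\]
Applying $\cJ_{\gamma}(u_{\gamma})\leq \cJ_{\gamma}(u^{\ast}+\delta)$ and the triangle inequality gives
\[
|Lu_{\gamma}-\mathbf{I}f|^{2}_{L^{2}_{\dbF}(\Om;H^{1}(0,T;L^{2}(G)))}+\gamma\,|u_{\gamma}|^{2}_{\cH_{T}}\;\leq\; C\bigl(\eta^{2}+|f-f^{\ast}|^{2}_{L^{2}_{\dbF}(0,T;L^{2}(G))}\bigr)+2\gamma\,|u^{\ast}|^{2}_{\cH_{T}},
\]
after absorbing $|\delta|^{2}_{\cH_{T}}$ into the data terms via the first step.

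In the third step, I would rewrite the residual for $w=u_{\gamma}-u^{\ast}$ using $Lu^{\ast}=\mathbf{I}f^{\ast}$:
\[
Lw \;=\; (Lu_{\gamma}-\mathbf{I}f)+\mathbf{I}(f-f^{\ast}),
\]
so that by the bound just obtained
\[
|Lw|_{L^{2}_{\dbF}(\Om;H^{1}(0,T;L^{2}(G)))}\leq C\bigl(\eta+|f-f^{\ast}|_{L^{2}_{\dbF}(0,T;L^{2}(G))}+\sqrt{\gamma}\,|u^{\ast}|_{\cH_{T}}\bigr).
\]
The $H^{1}(0,T;L^{2}(G))$-regularity of $Lw$ (inherited from the definition of $\cA_{ad}$) means that $w$ solves, in the strong sense of \eqref{system1}, a stochastic hyperbolic equation with boundary data $(h_{1}-h_{1}^{\ast},h_{2}-h_{2}^{\ast})$ and effective forcing $\tilde f:=\partial_{t}(Lw)$, whose $L^{2}_{\dbF}(0,T;L^{2}(G))$-norm is controlled by $|Lw|_{L^{2}_{\dbF}(\Om;H^{1}(0,T;L^{2}(G)))}$.

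Finally, I would apply the observability estimate \eqref{obser esti2} of Theorem~\ref{observability} to $w$ with these data, yielding
\[
|(w,w_{t})|_{L^{2}_{\dbF}(0,T;H^{1}(G))\times L^{2}_{\dbF}(0,T;L^{2}(G))}\leq Ce^{C(r_{1}^{2}+1)}\bigl(\eta+|\tilde f|_{L^{2}_{\dbF}(0,T;L^{2}(G))}\bigr),
\]
and combining with the previous bound delivers \eqref{6.31}, since $H^{1}_{\dbF}(0,T;L^{2}(G))\cap L^{2}_{\dbF}(0,T;H^{1}(G))$ is exactly the norm on $(w,w_{t})$ above. The main obstacle I anticipate is the construction of $\delta$: it must simultaneously extend the boundary data with the correct regularity in $\cH_{T}$ \emph{and} produce $L\delta$ of the right size in $L^{2}_{\dbF}(\Om;H^{1}(0,T;L^{2}(G)))$, which controls the size of the $H^{2}$ in space and $H^{1}$ in time traces on $\Si$; the secondary technical point is justifying the identification of $\partial_{t}(Lw)$ as a genuine source term for $w$ so that Theorem~\ref{observability} is legitimately applicable.
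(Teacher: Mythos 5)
Your overall architecture coincides with the paper's in its second half: bound $|L(u_{\gamma}-u^{\ast})|_{L^2_\dbF(\Om;H^1(0,T;L^2(G)))}$ together with $\gamma|u_{\gamma}-u^{\ast}|^2_{\cH_T}$, identify $\partial_t\big(L(u_{\gamma}-u^{\ast})\big)$ as the effective source so that the difference solves \eqref{system1} with boundary data $(h_1-h_1^{\ast},h_2-h_2^{\ast})$, and then invoke Theorem \ref{observability}; this is exactly what the paper does in \eqref{6.31-2} and \eqref{6.34}. The difference, and the problem, lies in how you reach the intermediate bound. You replace the paper's variational argument by a direct comparison $\cJ_\gamma(u_\gamma)\le\cJ_\gamma(u^{\ast}+\delta)$ with a lift $\delta$ of the boundary mismatch satisfying $|\delta|_{\cH_T}+|L\delta|_{L^2_\dbF(\Om;H^1(0,T;L^2(G)))}\le C\eta$, where $\eta$ contains only the $L^2_\dbF(0,T;H^1(\G))$-norm of $h_1-h_1^{\ast}$, the $L^2_\dbF(0,T;L^2(\G))$-norm of $h_{1,t}-h_{1,t}^{\ast}$ and the $L^2_\dbF(0,T;L^2(\G))$-norm of $h_2-h_2^{\ast}$. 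This estimate is not available: since $\cH_T=L^2_\dbF(0,T;H^2(G))\cap H^1_\dbF(0,T;H^1(G))$, the Dirichlet/Neumann trace pair of $\delta(t,\cdot)$ lives in $H^{3/2}(\G)\times H^{1/2}(\G)$, and any bounded extension operator controls $|\delta(t,\cdot)|_{H^2(G)}$ only by those stronger trace norms; for data merely in $H^1(\G)\times L^2(\G)$ an $H^2$-extension need not exist at all, let alone with a norm bounded by $\eta$. Because $|L\delta|$ enters your residual bound at order one (it is not damped by $\gamma$), this is not a removable technicality: carried through, your argument yields \eqref{6.31} with the boundary terms measured in norms stronger than those actually appearing in the statement.

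The paper's proof avoids any lift. It uses the first-order optimality identity \eqref{2.109} for $u_\gamma$, writes the analogous identity \eqref{2.115} for $u^{\ast}$ (which holds because $Lu^{\ast}=\mathbf{I}f^{\ast}$), subtracts the two, and tests the resulting identity with $\rho=\widetilde u_\gamma=u^{\ast}-u_\gamma$; Cauchy--Schwarz then gives $|L\widetilde u_\gamma|^2+\gamma|\widetilde u_\gamma|^2_{\cH_T}\le|\widetilde f|^2_{L^2_\dbF(0,T;L^2(G))}+\gamma|u^{\ast}|^2_{\cH_T}$, after which the observability step proceeds as you describe. You correctly spotted that the admissibility of $u^{\ast}$ (equivalently, of $\widetilde u_\gamma$ as a test direction) is delicate when $(h_1,h_2)\neq(h_1^{\ast},h_2^{\ast})$ --- the paper is silent on this point --- but the remedy is to work with the variational identities (or to restrict to zero-trace perturbations), not to build a lift whose $\cH_T$-norm is quantitatively controlled by the weak boundary norms in \eqref{6.31}; the latter is precisely the step that fails.
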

\begin{corollary}
Let $\delta \in
\left( 0,1\right) $, $\gamma =\gamma
\left( \delta
\right) =\delta ^{2}$ and let
\begin{equation*}
\begin{array}{ll}\ds
|h_{1}-h_{1}^{\ast }|_{L^2_\dbF(0,T;H^1(\G))
}+|h_{1,t}-h_{1,t}^{\ast
}|_{L^2_\dbF(0,T;L^2(\G)) }\\
\ns\ds +|h_{2}-h_{2}^{\ast }|
_{L^2_\dbF(0,T;L^2(\G)) }  +| f-f^{\ast
}|_{L^2(0,T;L^2(G))} \leq \delta.
\end{array}
\end{equation*}
Then we have
\begin{equation}\label{6.32}
|u_{\gamma }-u^{\ast }|
_{H^1_\dbF(0,T;L^2(G))\cap L^2_\dbF(0,T;H^1(G))}
\leq C\left( 1+|u^{\ast
}|_{\cA_{ad}}\right) \delta.
\end{equation}
\end{corollary}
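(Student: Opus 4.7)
The plan is to apply the observability estimate of Theorem \ref{observability} to the error $w := u_\gamma - u^*$, and then bound the residual of $u_\gamma$ with respect to the PDE via the minimizing property of $u_\gamma$.

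\textbf{Step 1 (equation for $w$).} Since $u^*$ exactly solves \eqref{system1} with right-hand side $f^*$ and lateral data $h_1^*, h_2^*$, we have $Lu^* = \mathbf{I}f^*$. Introduce the residual $\eta_\gamma := Lu_\gamma - \mathbf{I}f$; by the definition of $\cA_{ad}$, it lies in $L^2_\dbF(\Om; H^1(0,T;L^2(G)))$ and vanishes at $t=0$. Then $w$ solves the same linear stochastic hyperbolic equation as in \eqref{system1} with source $(f-f^*) + \eta_{\gamma,t} \in L^2_\dbF(0,T;L^2(G))$ and Cauchy data $w|_\Si = h_1 - h_1^*$, $\pa_\nu w|_\Si = h_2 - h_2^*$. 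Applying Theorem \ref{observability} yields
\begin{equation*}
\begin{array}{ll}
\ds |w|_{L^2_\dbF(0,T;H^1(G))\cap H^1_\dbF(0,T;L^2(G))} \\
\ns\ds \leq C\bigl(|h_1-h_1^*|_{L^2_\dbF(0,T;H^1(\G))} + |h_{1,t}-h_{1,t}^*|_{L^2_\dbF(0,T;L^2(\G))} + |h_2-h_2^*|_{L^2_\dbF(0,T;L^2(\G))} \\
\ns\ds \qq + |f-f^*|_{L^2_\dbF(0,T;L^2(G))} + |\eta_\gamma|_{L^2_\dbF(\Om; H^1(0,T;L^2(G)))}\bigr).
\end{array}
\end{equation*}

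\textbf{Step 2 (controlling $\eta_\gamma$ via the minimizer).} Because $u^*$ has traces $(h_1^*, h_2^*) \neq (h_1, h_2)$, it is not itself an admissible test function; we correct this by constructing, via standard trace/extension results, a function $\phi \in \cH_T$ with $\phi|_\Si = h_1-h_1^*$, $\pa_\nu \phi|_\Si = h_2-h_2^*$, and
\begin{equation*}
|\phi|_{\cH_T} \leq C\bigl(|h_1-h_1^*|_{L^2_\dbF(0,T;H^1(\G))} + |h_{1,t}-h_{1,t}^*|_{L^2_\dbF(0,T;L^2(\G))} + |h_2-h_2^*|_{L^2_\dbF(0,T;L^2(\G))}\bigr).
\end{equation*}
Then $\tilde u := u^* + \phi \in \cA_{ad}$ and $L\tilde u - \mathbf{I}f = \mathbf{I}(f^*-f) + L\phi$. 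The minimizing inequality $\cJ_\gamma(u_\gamma) \leq \cJ_\gamma(\tilde u)$, together with the boundedness of $L$ from $\cH_T$ into $L^2_\dbF(\Om;H^1(0,T;L^2(G)))$, gives
\begin{equation*}
|\eta_\gamma|^2_{L^2_\dbF(\Om;H^1(0,T;L^2(G)))} + \gamma|u_\gamma|^2_{\cH_T} \leq C\bigl(|f-f^*|^2_{L^2_\dbF(0,T;L^2(G))} + |\phi|^2_{\cH_T} + \gamma|u^*|^2_{\cH_T}\bigr),
\end{equation*}
which, combined with the norm bound on $\phi$, controls $|\eta_\gamma|$ by the Cauchy-data differences, $|f-f^*|$, and $\sqrt{\gamma}\,|u^*|_{\cH_T}$. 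Substituting this control into Step 1 produces \eqref{6.31}.

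The main technical obstacle will be the construction of the corrector $\phi$: one needs a function in the mixed-regularity space $\cH_T = L^2_\dbF(0,T;H^2(G)) \cap H^1_\dbF(0,T;H^1(G))$ whose lateral traces on $\Si$ match the prescribed defects with $\cH_T$-norm controlled \emph{only} by the trace norms that appear on the right of \eqref{6.31}. This is a standard anisotropic trace/extension issue, but matching the precise space-time norms and simultaneously verifying the continuity of $L$ from $\cH_T$ into $L^2_\dbF(\Om;H^1(0,T;L^2(G)))$ under the regularity imposed in Condition \ref{condition of d} will require the most care.
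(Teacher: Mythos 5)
Your Step 1 is sound and coincides with the paper's final step: the paper also applies Theorem \ref{observability} to the difference $\tilde u_\gamma=u^*-u_\gamma$, with the residual $L\tilde u_\gamma$ entering as an extra source term (see \eqref{6.31-2}); and once \eqref{6.31} is available, the corollary is just the substitution $\gamma=\delta^2$, $\sqrt{\gamma}\,|u^*|_{\cH_T}\le \delta\,|u^*|_{\cA_{ad}}$, which you should still state. The genuine gap is in your Step 2. Your comparison-function argument needs two ingredients: a corrector $\phi\in\cH_T$ with the prescribed lateral traces, \emph{and} the bound $|L\phi|_{L^2_\dbF(\Om;H^1(0,T;L^2(G)))}\le C|\phi|_{\cH_T}$, so that $\tilde u=u^*+\phi$ lies in $\cA_{ad}$ and $\cJ_\gamma(\tilde u)$ is controlled by the data errors. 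The second ingredient is false: $L$ is not bounded from $\cH_T$ into $L^2_\dbF(\Om;H^1(0,T;L^2(G)))$ --- this is exactly why membership of $L\rho$ in that space is imposed as a separate constraint in \eqref{6.27} and why $|L\rho|$ is added to the $\cA_{ad}$-norm. Concretely, $L\phi$ contains the It\^o integral $-\int_0^t b_4\phi\,dW(s)$, whose sample paths are almost surely not in $H^1(0,T;L^2(G))$ unless $b_4\phi\equiv 0$ (Brownian paths are nowhere differentiable), and the term $\phi_t(t)-\phi_t(0)$ belongs to $H^1(0,T;L^2(G))$ only if $\phi_{tt}\in L^2$, which $\phi\in\cH_T$ does not provide. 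Hence a generic trace/extension corrector does not yield an admissible comparison function, and the inequality you derive from $\cJ_\gamma(u_\gamma)\le\cJ_\gamma(\tilde u)$ is unjustified; to salvage this route you would have to \emph{assume} (Klibanov-style) the existence of an error function $\phi$ with $L\phi$ in the required space and controlled norm, an extra hypothesis not present in the statement.

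The paper avoids the corrector entirely: it writes the optimality identity \eqref{2.109} for $u_\gamma$ and the identity \eqref{2.115} for $u^*$ (using $Lu^*=\mathbf{I}f^*$), subtracts them, and tests the resulting identity with $\rho=\tilde u_\gamma$; Young's inequality then gives \eqref{6.34}, namely $|L\tilde u_\gamma|^2_{L^2_\dbF(\Om;H^1(0,T;L^2(G)))}+\gamma|\tilde u_\gamma|^2_{\cH_T}\le|\tilde f|^2_{L^2_\dbF(0,T;L^2(G))}+\gamma|u^*|^2_{\cH_T}$, which is precisely the residual bound you were after, with no mapping property of $L$ and no extension needed. (The paper treats $\cA_{ad}$ as a linear space so that $\tilde u_\gamma$ is an admissible test element; that is how it sidesteps the trace mismatch you tried to repair with $\phi$.) If you replace your Step 2 by this orthogonality-subtraction argument and then append the trivial specialization $\gamma=\delta^2$, your proof becomes the paper's; as written, Step 2 does not go through.
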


\begin{proof}[Proof of Theorem \ref{th3}]
From the definition of $u^*$ and $f^*$, we have
that
\begin{equation}\label{2.115}
\begin{array}{ll}\ds
\lan Lu^{\ast },L\rho\ran_{L^2_\dbF(\Om;H^1(0,T;
L^2(G)))} +\gamma \lan u^{\ast
},\rho\ran_{\cH_T} \\
\ns\ds =\lan \mathbf{I}f^{\ast}, L\rho
\ran_{L^2_\dbF(\Om;H^1(0,T; L^2(G)))} +\gamma
\lan u^{\ast },\rho\ran_{\cH_T}, \qq\forall
\rho\in \cA_{ad}.
\end{array}
\end{equation}

Put
$$
\widetilde{u}_{\gamma }=u^{\ast } -u_{\gamma
},\q\widetilde{f}=f^{\ast }-f.
$$

Subtracting identity (\ref{2.109}) from identity
(\ref{2.115}) and denoting $
\widetilde{u}_{\gamma }=u^{\ast }-u_{\gamma }$,
we obtain
\begin{equation*}
\begin{array}{ll}\ds
\lan L\widetilde{u}_{\gamma },
L\rho\ran_{L^2_\dbF(\Om;H^1(0,T; L^2(G)))}
+\gamma \lan \widetilde{u}
_{\gamma }, \rho\ran_{\cH_T}\\
\ns\ds =\lan L\rho, \mathbf{I}\tilde f
\ran_{L^2_\dbF(\Om;H^1(0,T; L^2(G)))} +\gamma
\lan u^{\ast },\rho\ran_{\cH_T}, \qq\forall
\rho\in \cA_{ad}.
\end{array}
\end{equation*}
Setting here $\rho =\widetilde{u}_{\gamma }$, we
get
\begin{equation}\label{2.116}
\begin{array}{ll}
\ds |L\widetilde{u}_{\gamma
}|_{L^2_\dbF(\Om;H^1(0,T; L^2(G)))}^{2}+\gamma
|\widetilde{u}_{\gamma }|
_{\cH_T}^{2}\\
\ns\ds =\lan L\widetilde{u}_{\gamma },
\mathbf{I}\tilde f  \ran_{L^2_\dbF(\Om;H^1(0,T;
L^2(G)))} +\gamma \lan u^{\ast },\widetilde{u}
_{\gamma }\ran_{\cH_T}\\
\ns\ds\leq \frac{1}{2}| L\widetilde{u}_{\gamma
}|_{L^2_\dbF(\Om;H^1(0,T; L^2(G)))}^{2}
+\frac{1}{2}\big|\mathbf{I}\tilde f
\big|_{L^2_\dbF(\Om;H^1(0,T;
L^2(G)))}^{2}+\frac{\gamma }{2} |u^{\ast
}|_{\cH_T}^{2}+\frac{ \gamma }{2}|
\widetilde{u}_{\gamma }|_{\cH_T }^{2}.
\end{array}
\end{equation}
Collecting terms on both side of
\eqref{2.116}, we find that
\begin{equation}\label{6.34}
\begin{array}{ll}
|L \widetilde{u}_{\gamma
}|_{L^2_\dbF(\Om;H^1(0,T; L^2(G)))}^{2}+ \gamma
| \widetilde{v}_{\gamma }|_{\cH_T
}^{2}\3n&\ds\leq \big|\mathbf{I}\tilde f
\big|_{L^2_\dbF(\Om;H^1(0,T;
L^2(G)))}^{2}+\gamma |u^{\ast
}|_{\cH_T}^{2}\\
\ns&\ds =\big| \tilde f \big|_{L^2_\dbF(0,T;
L^2(G))}^{2}+\gamma |u^{\ast }|_{\cH_T}^{2}.
\end{array}
\end{equation}
By applying Theorem \ref{observability} to
$\widetilde{u}_{\gamma }$, we obtain that
\begin{equation}\label{6.31-2}
\begin{array}{ll}\ds
|\widetilde{u}_{\gamma
}|_{H^1_\dbF(0,T;L^2(G))\cap
L^2_\dbF(0,T;H^1(G))}
\\
\ns\ds \leq C \big(|h_{1}-h_{1}^{\ast
}|_{L^2_\dbF(0,T;H^1(\G))
}+|h_{1,t}-h_{1,t}^{\ast
}|_{L^2_\dbF(0,T;L^2(\G)) }+|h_{2}-h_{2}^{\ast
}|
_{L^2_\dbF(0,T;L^2(\G)) }\\
\ns\ds\qq +|L \widetilde{u}_{\gamma
}|_{L^2_\dbF(\Om;H^1(0,T; L^2(G)))}^{2}.
\end{array}
\end{equation}
This, together with \eqref{6.34}, implies the
estimate (\ref{6.31})  immediately.
\end{proof}

\section{Numerical Approximations}
In this section, we numerically solve {\bf Problem (C)} by the proposed method be given in section \ref{sec:6.3}.

\subsection{Algorithm description}
%In this part, we are trying to numerically solve the Cauchy problem given by
%\begin{equation}\label{sys1}
%\begin{cases}
%dz_t-\sum_{i,j=1}^n(a^{ij}z_i)_jdt=\lbrack b_1z_t+b_2\cdot\nabla z+b_3z+f\rbrack dt+\lbrack b_4z+g\rbrack dW(t)& \mbox{in}\quad Q,\\
%z=h_1&\mbox{on}\quad  \Sigma,\\
%\frac{\partial z}{\partial \nu}=h_2&\mbox{on}\quad  \Sigma.
%\end{cases}
%\end{equation}
For the sake of simplicity, we set $(b_{ij})=I, b_1=b_2=b_3=0, b_4=1, T=1$ in this section. The Cauchy data are obtained by numerically solving the following initial-boundary value problem of stochastic hyperbolic equation
\begin{equation}
\begin{cases}
dz_t-\Delta zdt=fdt+z dW(t),& \mbox{in}\quad Q,\\
z=h_1,&\mbox{on}\quad  \Sigma,\\
z|_{t=0}=z_0(x),\ \frac{\partial z}{\partial t}|_{t=0}=\tilde{z}_0(x),&\mbox{in}\quad G.
\label{fp}
\end{cases}
\end{equation}
If the spatial domain $G$ is an interval or a rectangular region in the plane, we use the finite difference method to solve the problem, with time discretized via the Euler-Maruyama method. For irregular domains $G\subset\dbR^2$, we apply a meshless method using radial basis functions as kernels for the steady system at each temporal grid point after discretizing the equation in the temporal domain.

%Now, we can solve Problem (C) by the proposed method be given in section \ref{sec:6.3}. 
Drawing inspiration from \cite{Dalang2009},  we borrow the idea of two-step method of particular solution-method of fundamental solution \cite{ZCD} and address the minimization problem \eqref{6.28} using the kernel-based learning theory. 
%The basic idea is to use the Green's function of the stochastic hyperbolic equation \eqref{fp}. 
%It should be noted that this method is currently only available to 1d and 2d problems, since the Green's functions are not given in explicit forms but distributions in multi-dimensional ones.
By the superposition principle,
the stochastic hyperbolic equations \eqref{fp} can be  written as
\begin{equation}
z(x,t)=z^h(x,t)+z^p(x,t),
\end{equation}
where $z^p$ is a particular solution of the equation 
\begin{equation}
z^p_{tt}-\Delta z^p=f, \quad  \quad (x,t)\in Q,
\label{eq2}
\end{equation}
and 
$u^h$ satisfies
\begin{equation}\label{eq1}
\left\{\begin{aligned}\ds &dz^h_t-\Delta z^hdt=(z^h+z^p)dW(t), \quad (x,t)\in Q,\\
\ns\ds &z^h(x,t)=h_1-z^p(x,t),\quad \frac{\partial z}{\partial \nu}(x,t)=h_2-\frac{\partial z^p}{\partial \nu}(x,t),\quad (x,t)\in\Sigma.\end{aligned}\right.
\end{equation}\label{bc}
Equations \eqref{eq2}  and \eqref{eq1} can be seen as the non-homogeneous hyperbolic equation and  slight perturbations of the homogeneous hyperbolic equation, respectively. 

Now we solve {\bf Problem (C)} by above two problems. Let $\psi(r,t)\=\sqrt{1+c^2(r^2+t^2)}$ be the time-dependent radial basis function, where $c$ is the parameter to be determined, and
\begin{equation}
\tilde{z}^p(x,t)=\sum_{j=1}^N\zeta_j\psi_j(x,t),\quad (x,t)\in Q,
\label{zp}
\end{equation}
subject to \eqref{eq2}.
Here $\zeta_j$ are unknown coefficients to be determined, $N$ is the number of source points, and $\psi_j(x,t)$ are the basis functions given by
$
\psi_j(x,t)=\psi(x-\xi_j,t-\eta_j),j=1,\cdots,N.
$
From the theory of reproduced kernel Hilbert space, we know that
$$
z^p(x,t)\thickapprox\tilde{z}^p(x,t),\quad (x,t)\in\Sigma.
$$ 

We apply the idea of kernel-based learning theory, with Green's function $G(x,t)$ as kernel, to simulate the numerical approximation of problem \eqref{eq1}. However, the kernel lacks strong regularizing properties and its irregularity increases with the dimension. Specifically, the Green's function for the wave equation depends on the spatial domain's dimension \cite{Folland}. For $n=1$, 
$G(x,t)=\frac121_{|x|<t}$
is a bounded but discontinuous function. For $n=2$,
$G(x,t)=\frac{1}{\sqrt{2\pi}\sqrt{t^2-|x|^2}}1_{|x|<t}$
is unbounded and discontinuous.
For $n=3$, the Green's function is not a function, but a measure, requiring a specific definition for its convolution with a test function $\iota$:
$$
\begin{aligned}
(G * \iota)(x,t) & =\frac{1}{4 \pi} \int_0^t d s \int_{\partial B(0, s)} \iota(t-s, x-y) \frac{\sigma_s(d y)}{s} \\
& =\frac{1}{4 \pi} \int_0^t d s\ s \int_{\partial B(0,1)} \iota(t-s, x-s y) \sigma_1(d y) .
\end{aligned}
$$
For $n>3$, the Green's function becomes even more irregular. Therefore we only study examples in $n=1,2$ spatial domains, as more technical algorithms should be exploited for $n\geq3$. 
%The mild solution of \eqref{fp} with vanishing initial condition is
%\begin{equation}
%\begin{aligned}
%z(x,t)=&\int_0^tzG(x-y,t-s)dW(t)\\
%=&\int_0^t\frac{1}{\sqrt{2\pi}}\frac{1}{\sqrt{(t-s)^2-|y-x|^2}}1_{|y-x|<t-s}zdW(t).
%\end{aligned}
%\end{equation}
We approximate the mild solution $z^h(x,t)$ by 
\begin{equation}
\tilde{z}^h(x,t)=\sum_{j=1}^N \lambda_j\phi_j(x,t),
\label{s}
\end{equation}
where $\lambda_j$ are unknown coefficients to be determined, and $\phi_j(x,t)$ are the basis functions given by
\begin{equation}
\phi_j(x,t)=G(x-\xi_j,t-\eta_j),\ j=1,\cdots,N,
\end{equation}
where $\{(\xi_j,\eta_j)\}_{j=1}^N$ are the source points. Therefore, the solution of \eqref{fp} can be approximated by
\begin{equation}\label{diseq}
\tilde{z}(x,t)=\tilde{z}^h(x,t)+\tilde{z}^p(x,t)=\sum_{j=1}^N\lambda_j\phi_j(x,t)+\sum_{j=N+1}^{2N}\zeta_j\psi_j(x,t).
\end{equation}
Combining \eqref{zp}, \eqref{s} with \eqref{diseq}, {\bf Problem (C)} can be discretely written as a system of matrix equations:
\begin{equation}
\left\{\begin{aligned}
\ds &\sum_{j=1}^N\lambda_j\psi_j(x_i,y_i)=f(x_i,t_i),&i=1,\cdots,N_i\\
\ns\ds &\sum_{j=1}^N\lambda_j\psi_j(x_k,y_k)+\sum_{l=1}^N\zeta_l\phi_l(x_k,t_k)=h(x_k,t_k),&k=1,\cdots,N_b\\
\end{aligned}\right.
\end{equation}\label{diseq2}
where $h=(h_1,h_2),(x,t)\in\Sigma$ represents a vector of boundary conditions, $N_i$ is the number of interior points and $N_b$ is the number of boundary points.
The approximated solution should satisfy the equation ({\ref{diseq2}}), and the coefficient $(\lambda_1,\cdots,\lambda_N,\zeta_1,\cdots,\zeta_N)$, are determined to ensure $\tilde{z}$ satisfies the Cauchy data. Consequently, we solve a linear system of algebraic equations $A\Lambda=b$ for the unknowns  $\Lambda=(\lambda_1,\cdots,\lambda_N,\zeta_1,\cdots,\zeta_N)^\top$ based on the above analysis to obtain an approximated solution.

The optimal rule for the location of source points remains an open problem. The choice of source points should ensure that the basis functions $\phi_j(x,t)$ are analytic in $G$, guaranteeing that $\tilde{z}$ is analytic in the domain $Q$, which is necessary for algorithm convergence. Since the Green's function is discontinuous at $x=t$, we occupy uniformly distributed points on $x=t+R$ and $x=t-R$ as the source points, where $R$ is to be determined. 

%get the following linear system of algebraic equations for the unknowns $\lambda=(\lambda_1,\cdots,\lambda_N,\zeta_1,\cdots,\zeta_N)$:
%\begin{equation}\label{matrixequation}
%A\lambda=b,
%\end{equation}

%Thus we get the following linear system of algebraic equations for the unknowns $\lambda=(\lambda_1,\cdots,\lambda_N,\zeta_1,\cdots,\zeta_N)$:
%\begin{equation}\label{matrixequation}
%A\lambda=b,
%\end{equation}
%where 
%\begin{equation*}
%A=\begin{bmatrix}
%\phi(x_1-\xi_l,t-\eta_l)&\phi(x_{m+1}-\xi_l,t-\eta_l)&\psi(x_1-\xi_l,t-\eta_l)&\psi(x_{m+1}-\xi_l,t-\eta_l)\\
%\phi(x_2-\xi_l,t-\eta_l)&\phi(x_m-\xi_l,t-\eta_l)&\psi(x_2-\xi_l,t-\eta_l)&\psi(x_m-\xi_l,t-\eta_l)\\
%0&0&\Psi(x_1-\xi_l,t-\eta_l)&\Psi(x_{m+1}-\xi_l,t-\eta_l)\\
%0&0&\Psi(x_2-\xi_l,t-\eta_l)&\Psi(x_m-\xi_l,t-\eta_l)
%\end{bmatrix}_{8m\times 2N}
%\end{equation*}
%$\Psi=\psi_{tt}-\psi_{xx}-\psi_{yy}$, and
%$$
%b=\begin{bmatrix}
%z(x_1,t_k),z(x_2,t_k),z(x_m,t_k),z(x_{m+1},t_k),f(x_1,t_k),f(x_2,t_k),f(x_m,t_k),f(x_{m+1},t_k)
%\end{bmatrix}_{8n}^\top.
%$$

Compared to the functional given in \eqref{6.28}, we compute $Lu-\mathbf{I}f$ using $A\Lambda-b$. In our computation we adapt the standard Tikhonov regularization to solve the matrix equation $A\Lambda=b$. Specifically, we solve the least squares problem $\ds\min_{\lambda}\lbrace \|A\Lambda-b\|^2+\gamma^2\|\Lambda\|^2\rbrace$ with a suitable choice of the regularization parameter $\gamma$, using such as  $L-curve$ method or {\it generalized cross validation} (GCV) \cite{EG}, where $\|\cdot\|$ denotes the usual Euclidean norm. This yields the regularized solution $\Lambda_\gamma=(\lambda^*_1,\cdots,\lambda^*_N,\zeta^*_1,\cdots,\zeta^*_N)^\top$,
%The next step is to determine the value of regularization parameter $\gamma$. In our computation, we use the $L-curve$ method to choose a suitable value of $\gamma$. The $L-curve$ method is sketched in the following:
%
%Define a curve 
%$$
%L=\lbrace (\log{(\|\lambda_\gamma\|^2)},\log{(\|A\lambda_\gamma-b\|^2)}),\gamma>0\rbrace.
%$$
%
%The curve is called the L-curve, and the parameter $\gamma$ which corresponds to a regularized solution near the ``corner" of the L-curve is known as a suitable regularization parameter. 
%Denote the regularized solution by $(\lambda_\gamma^*,\zeta_\gamma^*)$. 
and thus the approximated solution $\tilde{z}_\gamma^*$ is
\begin{equation}
\tilde{z}_\gamma^*(x,t)=\sum_{j=1}^N\lambda_j^*\phi_j(x,t)+\sum_{j=N+1}^{2N}\zeta_j^*\psi_j(x,t).
\end{equation}
The numerical results presented in the following section demonstrate that the proposed scheme is both feasible and efficient. To illustrate the comparison between the exact solution and its approximation, $\tilde{z}_\gamma^*$, and in order to avoid the ``inverse crime", we compute $z(x,t)$ using a finite difference method and verify the efficiency of the proposed method by following defined relative errors:
%Denote the relative error by
\begin{equation*}
E_1(x,t)=\frac{|\mathbb{E}[z_\gamma^*(x,t)-z(x,t)]|}{|\mathbb{E}[z(x,t)]|},
\end{equation*}
\begin{equation*}
E_2(x)=\frac{\|z_\gamma^*(x,\cdot)-z(x,\cdot)\|_{L^2_{\mathcal{F}_t}(\Omega;L^2(0,T))}}{\|z(x,\cdot)\|_{L^2_{\mathcal{F}_t}(\Omega;L^2( 0,T))}},
\label{relative error}
\end{equation*}
and
\begin{equation*}
E_3(t)=\frac{\|z_\gamma^*(\cdot,t)-z(\cdot,t)\|_{L^2_{\mathcal{F}_t}(\Omega;L^2(G))}}{\|z(\cdot,t)\|_{L^2_{\mathcal{F}_t}(\Omega;L^2(G))}}.
\label{relative errort}
\end{equation*}
%\begin{equation*}
%E_1(x,t)=\frac{\|z_\gamma^*(x,t)-z(x,t)\|_{l^2}}{\|z(x,t)\|_{l^2}}.
%\end{equation*}
We assess the variations of error at each ``point" using the relative error $E_1(x,t)$ when $n=1$. For $n=2$, we illustrate the errors by plotting $E_2(x)$ and $E_3(t)$ to provide a clearer visual representation.

%by using the finite difference method with time discretized via the Euler-Maruyama method to obtain the Cauchy data on $\lbrack 0,T\rbrack \times \Gamma$. %Since the exact expression of stochastic hyperbolic equations is unavailable \cite{Walsh1986}, w

%This brings us more challenges to deal with such problem. And it is also difficult to deal with the stochastic equations, since the solution of the stochastic equation does not actually have a partial derivative with respect to time, so it should be rewritten into a solvable integral equation, and then show that in this form there is a solution which is a continuous, though non-differentiable, function. In higher dimensions, the solution turns out to be a distribution, not a function. That is one of the difficult obstacles to deal with the SPDE.

\subsection{Numerical examples}
We complete the verification of the proposed algorithm by following examples as in 1D and 2D spatial domains.

\begin{example}\label{ex1d}
Let $G=(0,1)$ and $\Gamma=\lbrace0\rbrace\cup\lbrace 1\rbrace$. Suppose that

(a)
$z_0(x)=e^x,\quad \tilde{z}_0(x)=e^x,\quad f(x,t)=0, \quad 
h_1(x,t)=\begin{cases}
e^t,&\quad x=0,\\
e^{1+t},&\quad x=1.
\end{cases}
$

%
%
%(b)
%$$
%\begin{aligned}
%&z_0(x)=0,\quad \tilde{z}_0(x)=0,\quad h_1(x,t)=0\\
%&f(x,t)=(2+2\pi^2t^2)\sin{(2\pi x)}
%\end{aligned}
%$$

(b)
$
z_0(x)=\sin{(\pi x)},\quad \tilde{z}_0(x)=-\sin{(\pi x)},\quad h_1(x,t)=0,\quad
f(x,t)=(1+\pi^2)\sin{(\pi x)}e^{-t}.
$

%(d)
%$$
%\begin{aligned}
%&z_0(x)=\begin{cases}
%2x,&0\leq x< \frac{1}{2}\\
%2(1-x),&\frac{1}{2}\leq x\leq 1
%\end{cases},\quad \tilde{z}_0(x)=0,\\
%&f(x,t)=0\quad h_1(x,t)=0
%\end{aligned}
%$$

(c)
$z_0(x)=\begin{cases}
\frac{10}{7}x,&0\leq x< 0.7\\
\frac{10}{3}(1-x),&0.7\leq x\leq 1
\end{cases},\quad \tilde{z}_0(x)=0,\quad f(x,t)=0,\quad h_1(x,t)=0.$
\end{example}

Figure \ref{ex1d1} shows the numerical results for {\bf Problem (C)} with a noise level $\delta=3\%$ from different perspectives. As with studies of stochastic partial differential equations, it is important to check a large number of sample paths in numerical experiments to accurately simulate the expectation of the solution. We conducted tests with varying numbers of sample paths. From Figure \ref{ex1d1}, we observe that when the number of sample paths is fewer than 100, the numerical approximations are somewhat rough. The results become smoother as the number of sample paths increases. However, interestingly, when the number of sample paths exceeds100, the results do not improve significantly. Thus to save computational costs, we only do the experiments with number of sample paths $\# = 100$ in following numerical tests.

\begin{figure}[htbp]
  \centering
 \captionsetup[subfloat]{labelsep=none,format=plain,labelformat=empty}
\subfloat{\includegraphics[width=0.2\textwidth]{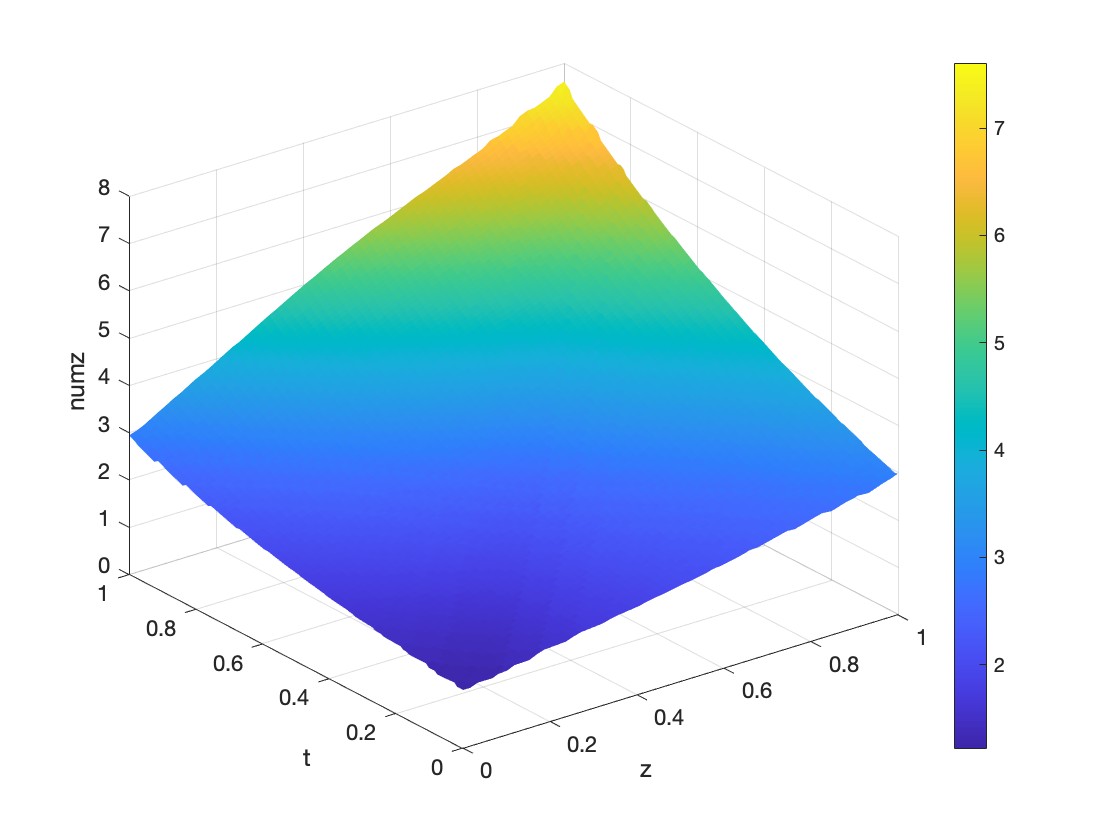}}
\subfloat{\includegraphics[width=0.2\textwidth]{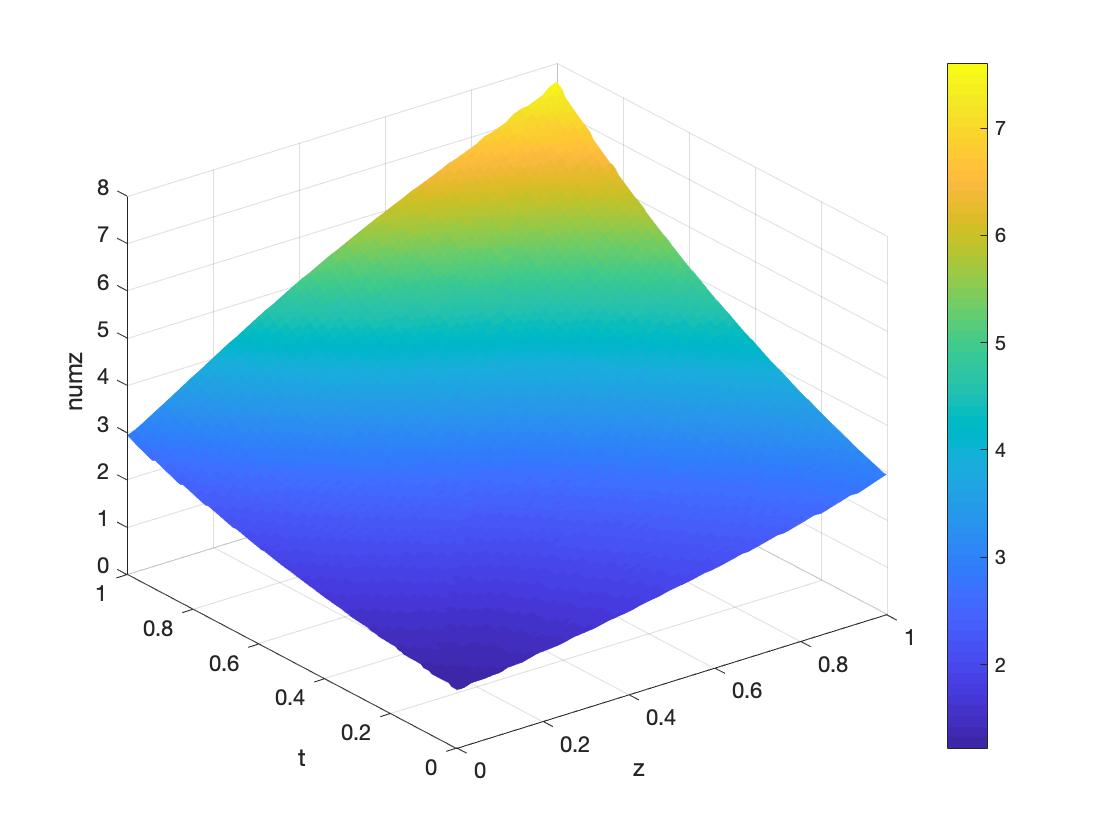}}
\subfloat{\includegraphics[width=0.2\textwidth]{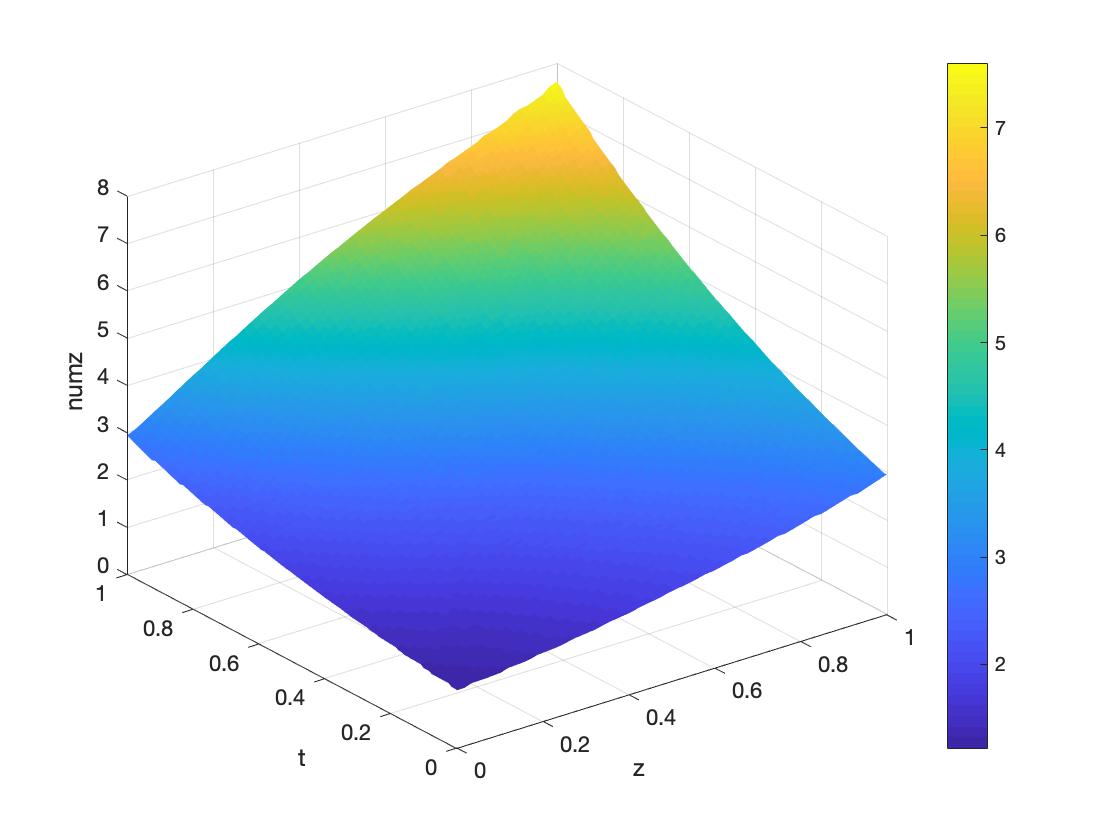}}
\subfloat{\includegraphics[width=0.2\textwidth]{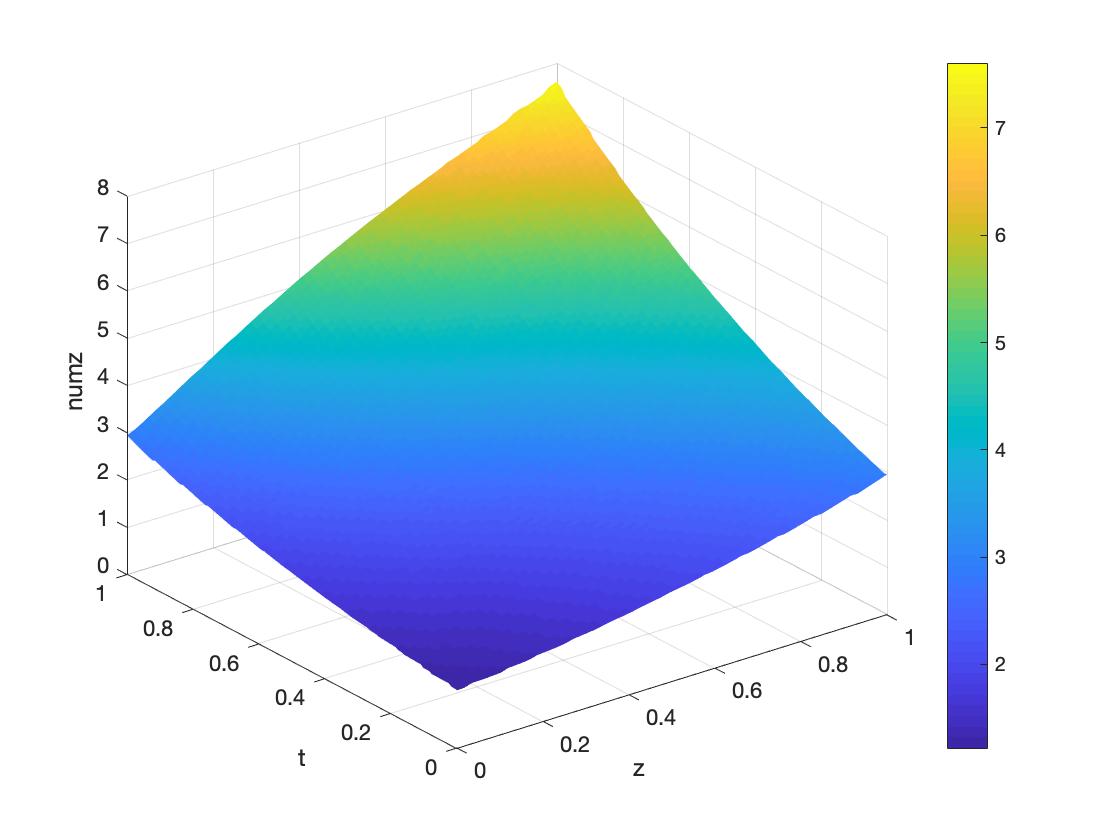}}
\subfloat{\includegraphics[width=0.2\textwidth]{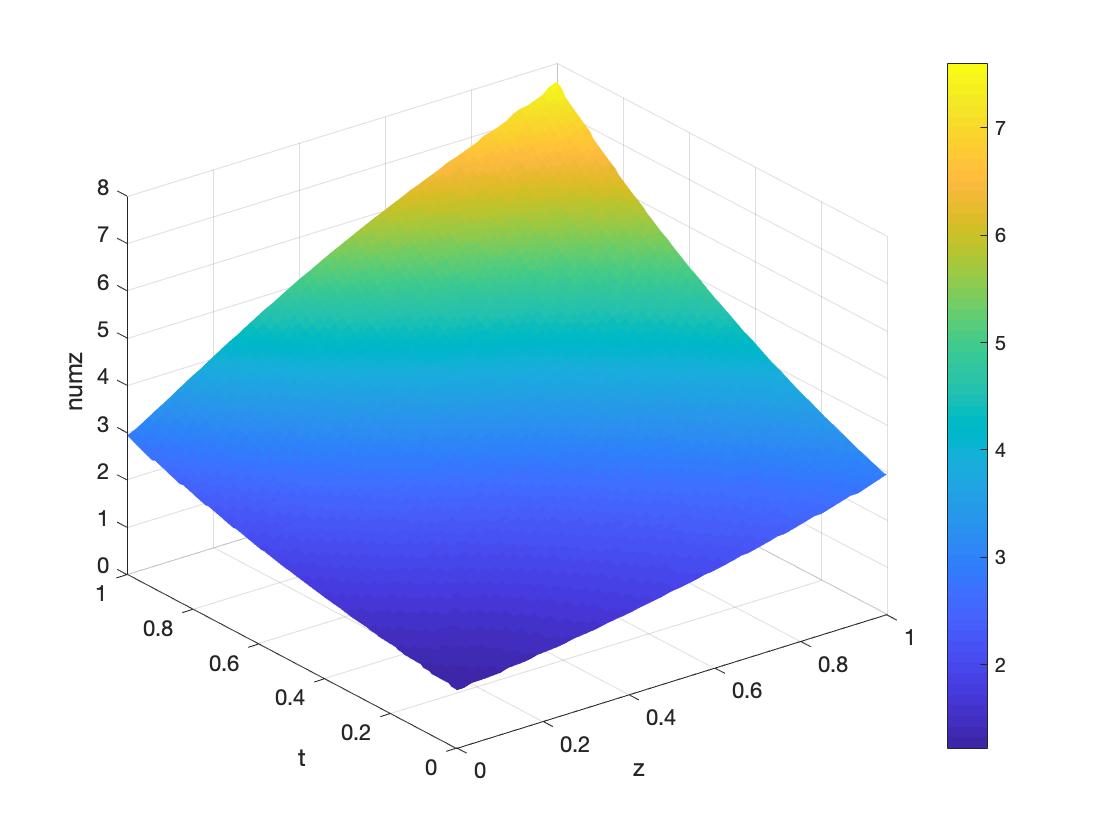}}\\
\subfloat[(a)$\#=1$]{\includegraphics[width=0.2\textwidth]{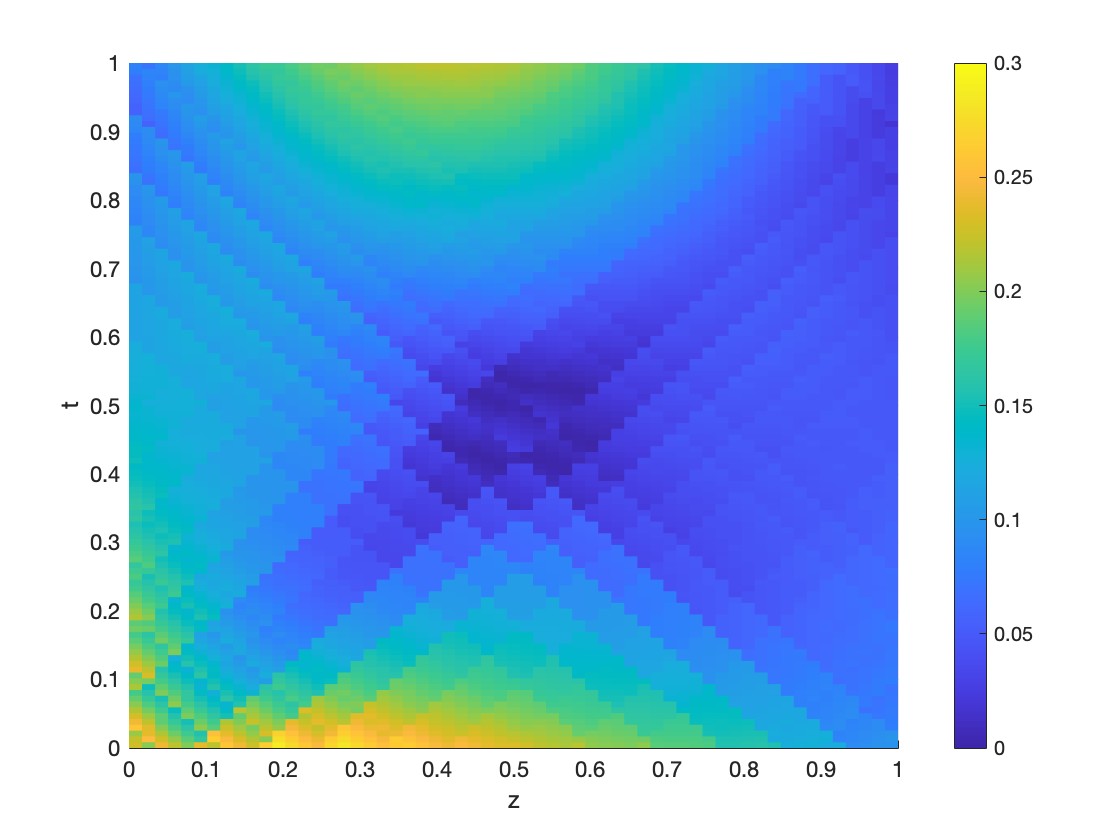}}
\subfloat[(b)$\#=10$]{\includegraphics[width=0.2\textwidth]{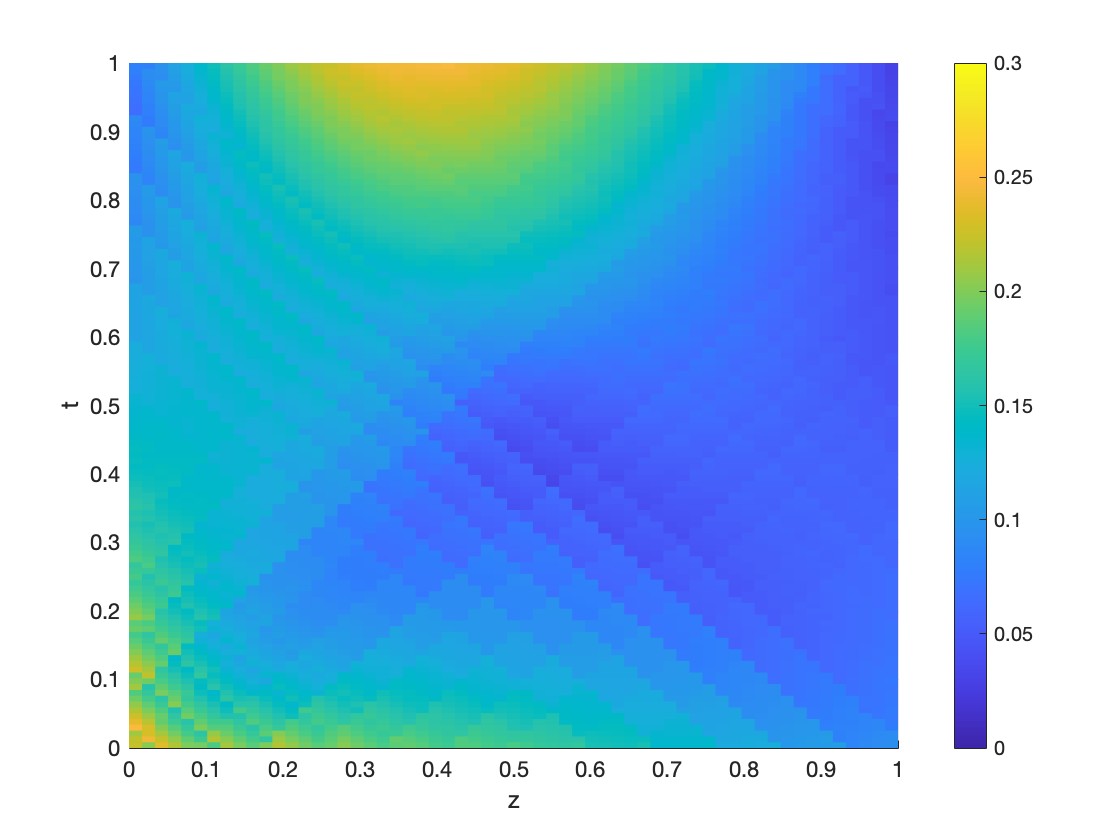}}
\subfloat[(c)$\#=100$]{\includegraphics[width=0.2\textwidth]{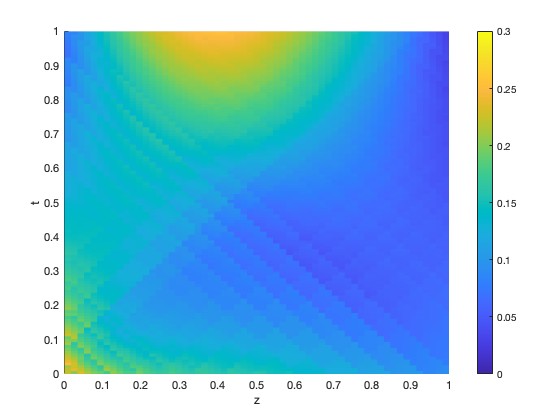}}
\subfloat[(d)$\#=1000$]{\includegraphics[width=0.2\textwidth]{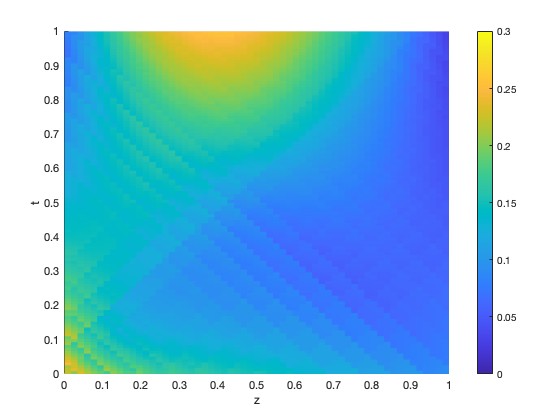}}
\subfloat[(e)$\#=10000$]{\includegraphics[width=0.2\textwidth]{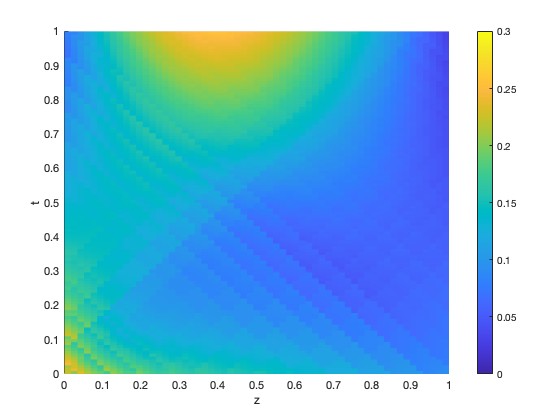}}
\caption{Numerical approximations (above) and relative errors (below) of Example \ref{ex1d}(a) from different perspectives and relative errors $E_1(x,t)$ of different numbers of sample paths $\#=1,10,100,1000,10000$ with  $\delta=3\%$.}\label{ex1d1}
\end{figure}

We test the optimal choices of parameters $R$ and $c$ when $\delta=3\%$ by Example \ref{ex1d}(b). Firstly, we fix $R=0.1$ and observe the relative error $E$ with the change of $c$. It can be seen from Figures  \ref{rex1d3}-\ref{ex1d3} that $c=0.6$, $R=1.5$ perform well. Thus, we fix $R=1.5$ and $c=0.6$ in the numerical test of 1D examples.  We note that suitable choices of $c$ and $R$ is crucial for numerical simulation. While these parameters may not always be optimal for every example, they are sufficient to test the effectiveness of the proposed numerical algorithm. Further discussion on parameter choices is not provided here.

\begin{figure}[htbp]
  \centering
%\subfloat[Relative errors $E_2(x)$ for different $c$]{\includegraphics[width=0.4\textwidth]{F3A.jpg}}
%\subfloat[Relative errors $E_3(t)$ for different $c$]{\includegraphics[width=0.4\textwidth]{F3B.jpg}}\\
\subfloat[$E_2(x)$]{\includegraphics[width=0.3\textwidth]{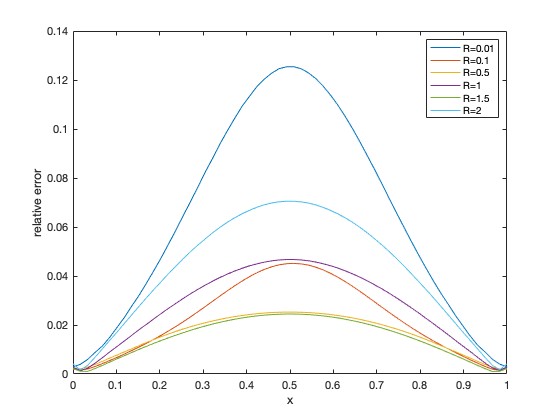}}\ 
\subfloat[$E_3(t)$]{\includegraphics[width=0.3\textwidth]{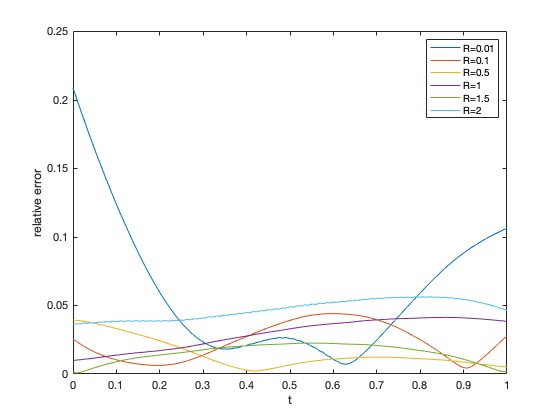}}
\caption{Relative errors with different $R$ of Example \ref{ex1d}(b).}\label{rex1d3}
\end{figure}

\begin{figure}[htbp]
  \centering
  \captionsetup[subfloat]{labelsep=none,format=plain,labelformat=empty}
%\subfloat[Approximation when $c=0.5$]{\includegraphics[width=0.2\textwidth]{F3E.jpg}}
\subfloat[\qquad \qquad (a)\ $c=0.5$]{\includegraphics[width=0.2\textwidth]{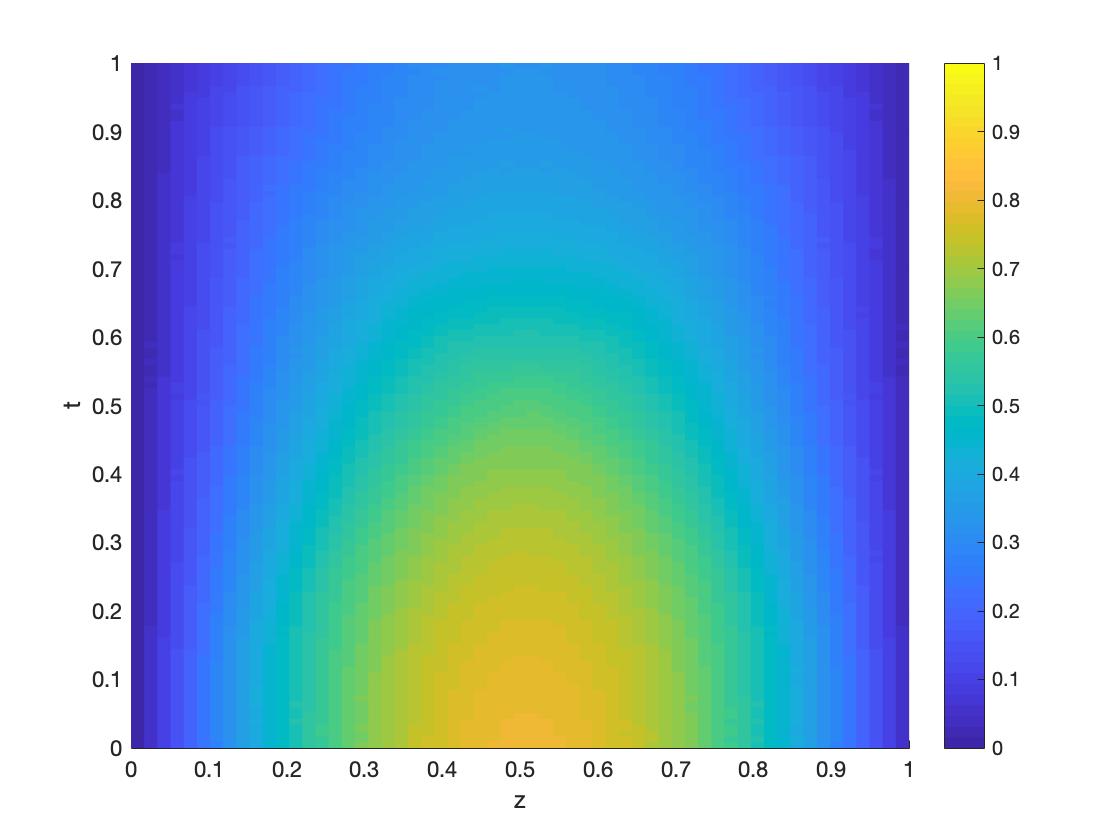}}\ 
\subfloat{\includegraphics[width=0.2\textwidth]{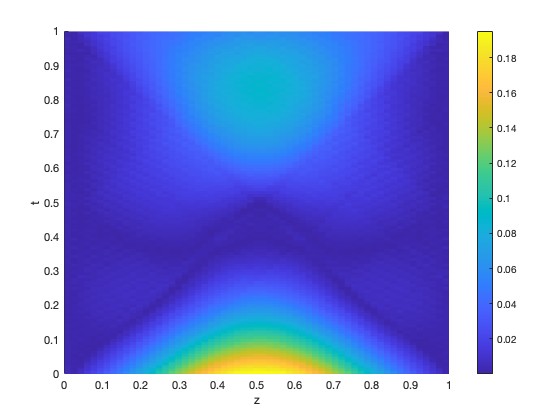}}\
%\subfloat[Approximation when $c=0.6$]{\includegraphics[width=0.2\textwidth]{F3F.jpg}}
\subfloat[\qquad \qquad (b)\ $c=0.6$]{\includegraphics[width=0.2\textwidth]{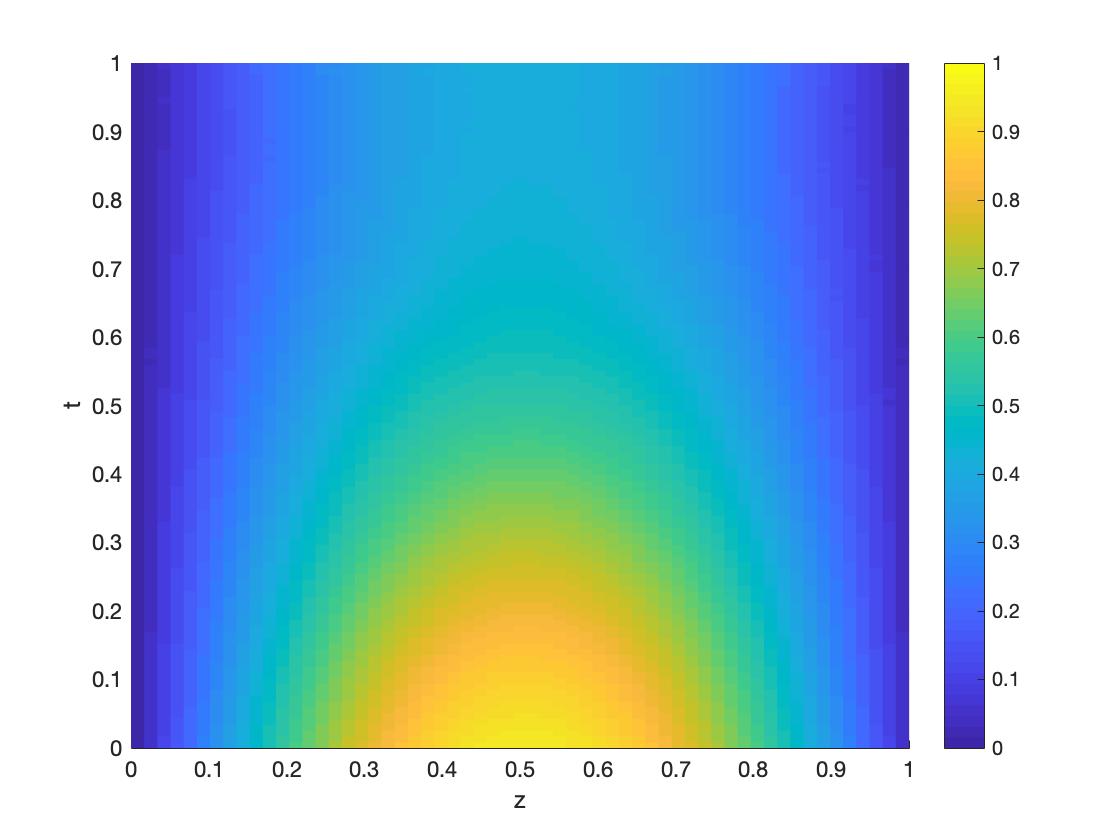}}\ 
\subfloat{\includegraphics[width=0.2\textwidth]{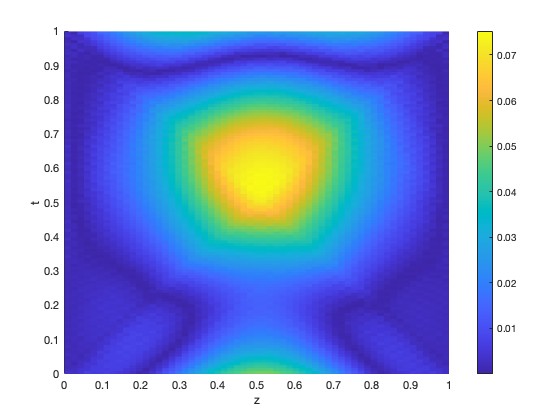}}\\
%\subfloat[Approximation when $c=0.7$]{\includegraphics[width=0.2\textwidth]{F3G.jpg}}
\subfloat[\qquad \qquad (c)\ $c=0.7$]{\includegraphics[width=0.2\textwidth]{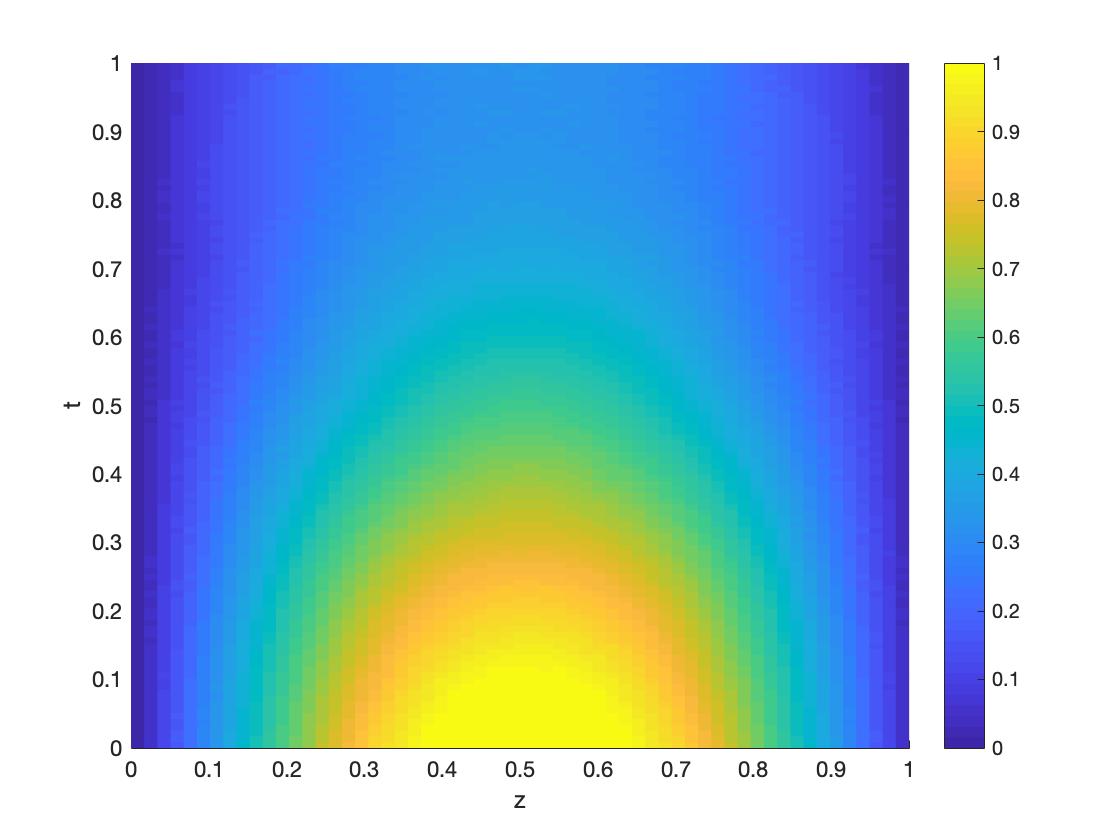}}\ 
\subfloat{\includegraphics[width=0.2\textwidth]{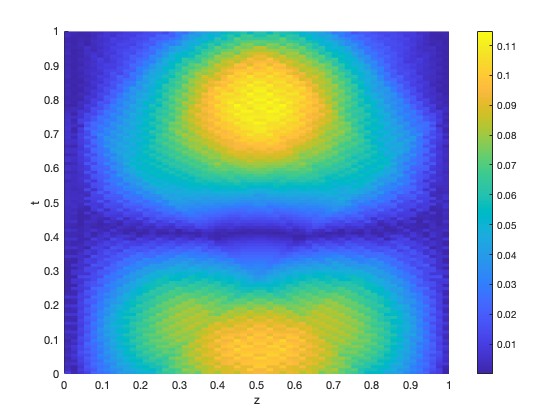}}\ 
%\subfloat[Approximation when $c=0.8$]{\includegraphics[width=0.2\textwidth]{F3H.jpg}}
\subfloat[\qquad \qquad (d)\ $c=0.8$]{\includegraphics[width=0.2\textwidth]{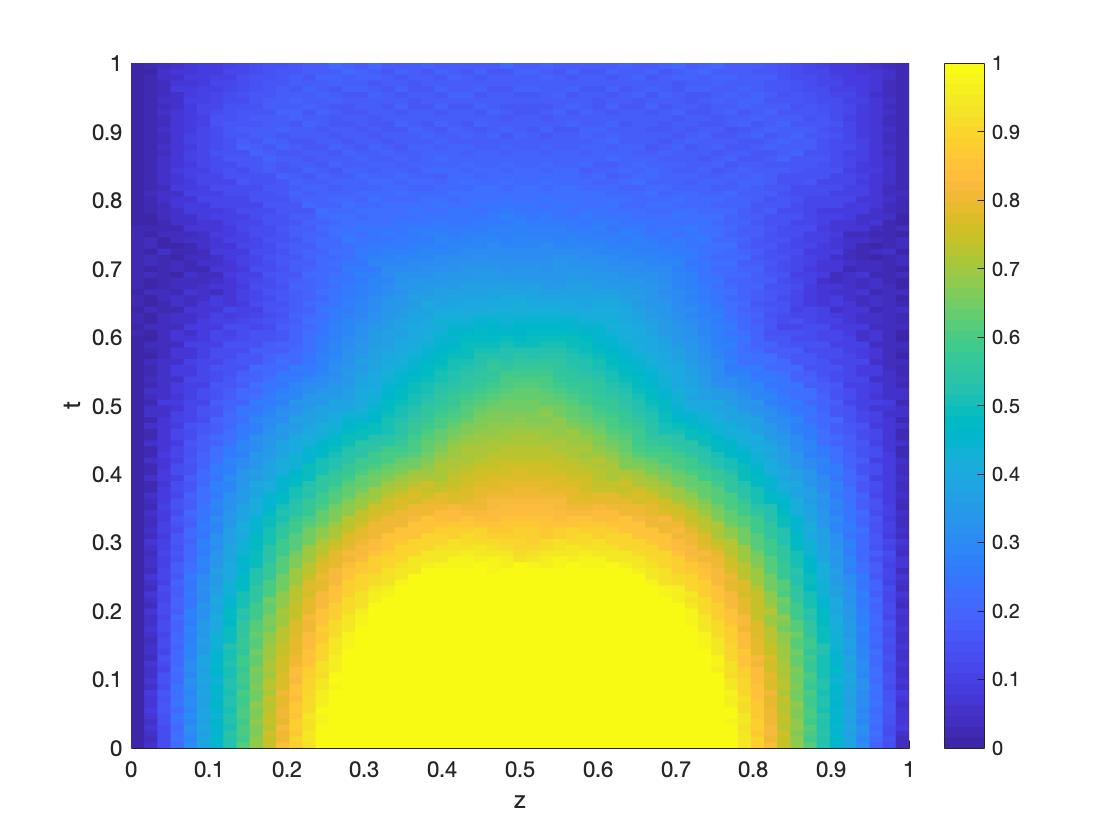}}\ 
\subfloat{\includegraphics[width=0.2\textwidth]{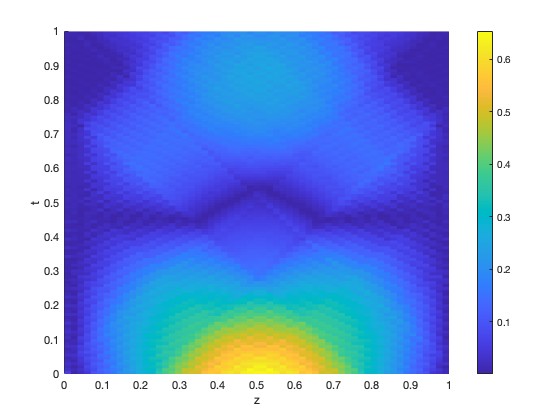}}\\
%\subfloat[Approximation when $c=0.9$]{\includegraphics[width=0.2\textwidth]{F3I.jpg}}
\subfloat[\qquad \qquad (e)\ $c=0.9$]{\includegraphics[width=0.2\textwidth]{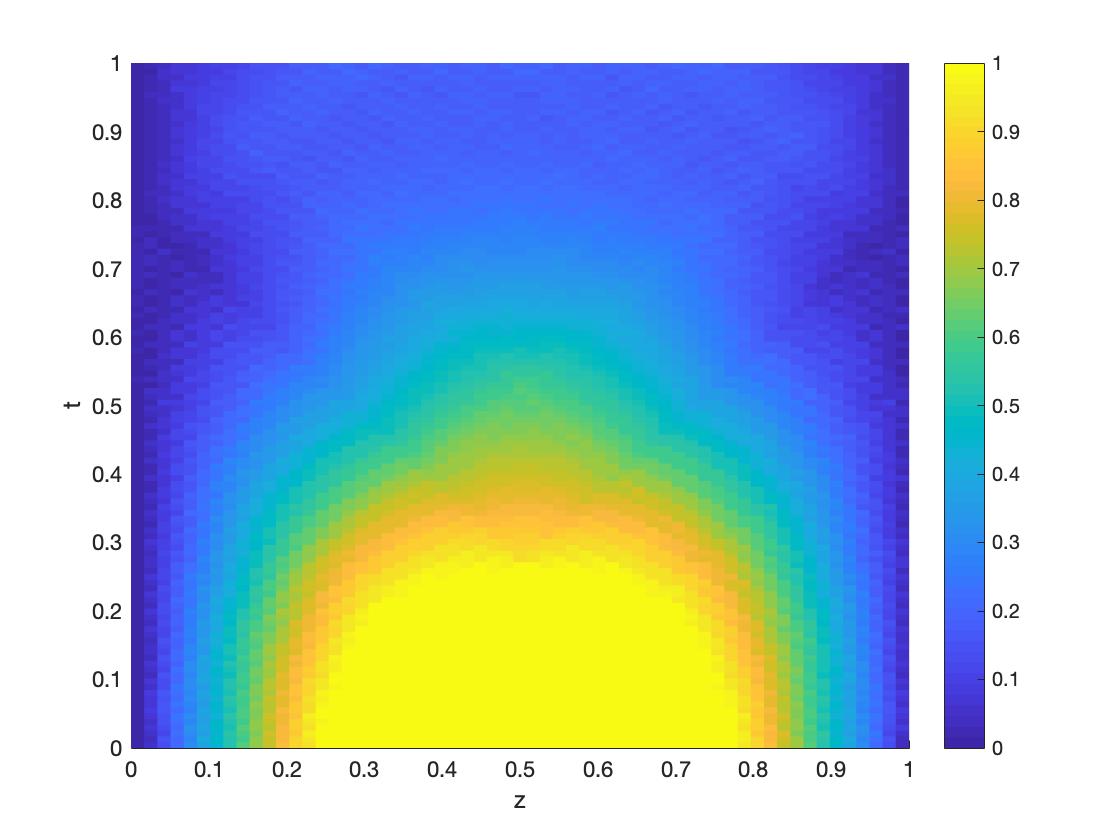}}\ 
\subfloat{\includegraphics[width=0.2\textwidth]{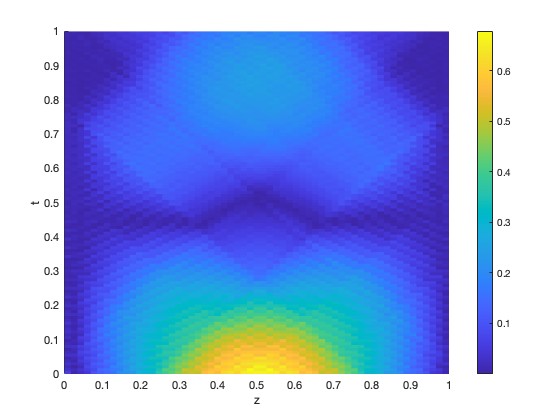}}\ 
%\subfloat[Approximation when $c=1$]{\includegraphics[width=0.2\textwidth]{F3J.jpg}}
\subfloat[\qquad \qquad (f)\ $c=1$]{\includegraphics[width=0.2\textwidth]{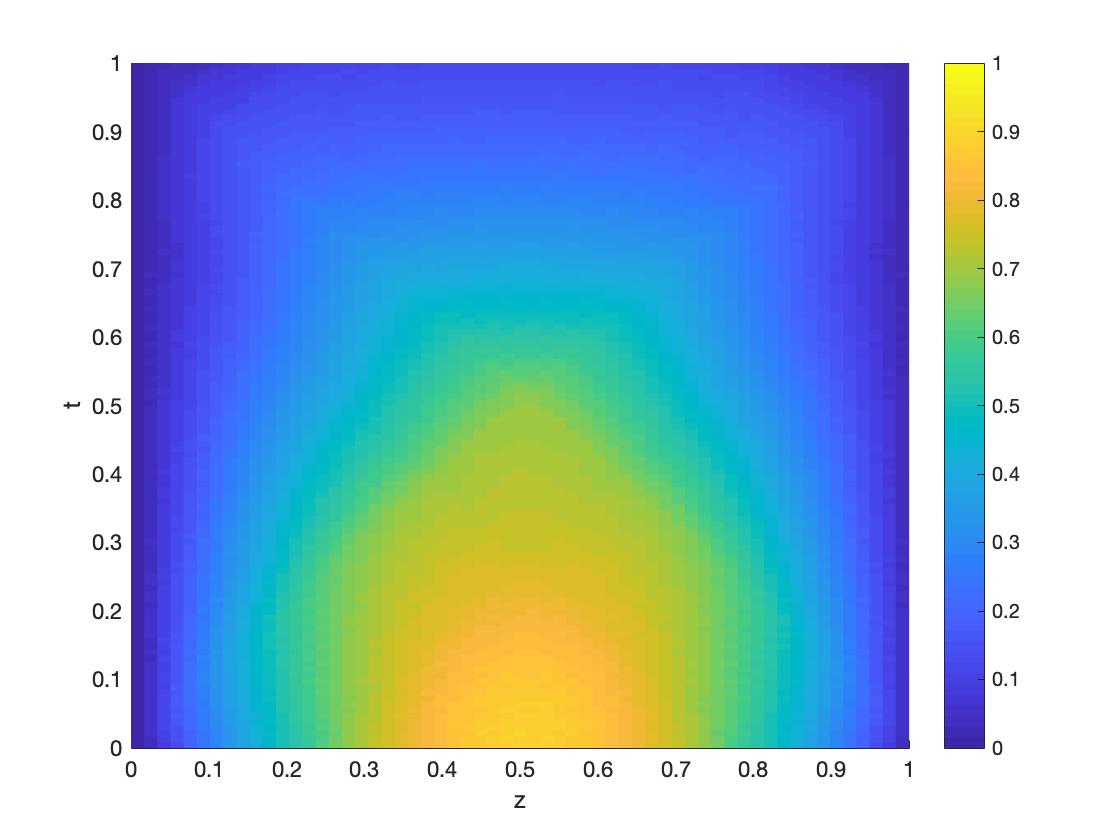}}\ 
\subfloat{\includegraphics[width=0.2\textwidth]{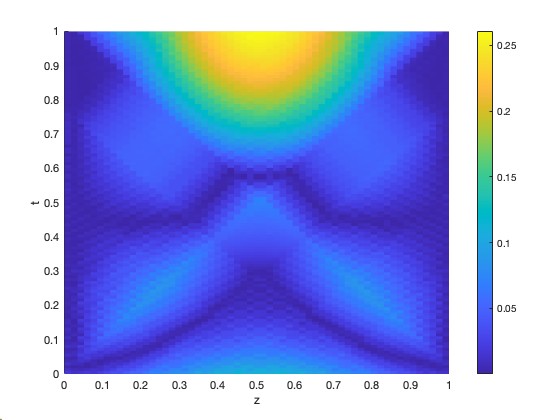}}
\caption{Numerical approximations (left) and relative errors $E_1(x,t)$ (right) by expectation of solutions with different parameter $c$ of Example \ref{ex1d}(b).}\label{ex1d3}
\end{figure}

For Example \ref{ex1d}(c), we discuss how the noise level affect the results, as shown in Figure \ref{ex1d5}.
%
%\begin{figure}[htbp]
%  \centering
%\subfloat[Relative errors $E(x)$]{\includegraphics[width=0.5\textwidth]{F9A.jpg}}
%\subfloat[Relative errors $E_3(t)$]{\includegraphics[width=0.5\textwidth]{F9B.jpg}}
%\caption{Relative errors with different noisy levels of Example \ref{ex1d}(e)}
%\end{figure}

\begin{figure}[htbp]
  \centering
    \captionsetup[subfloat]{labelsep=none,format=plain,labelformat=empty}
%\subfloat[Approximation when $\delta=1\%$]{\includegraphics[width=0.3\textwidth]{F9C.jpg}}
\subfloat{\includegraphics[width=0.3\textwidth]{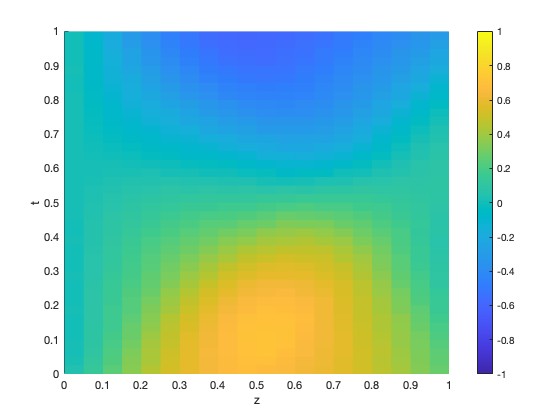}}\ 
%\subfloat[Approximation when $\delta=5\%$]{\includegraphics[width=0.3\textwidth]{F9D.jpg}}
\subfloat{\includegraphics[width=0.3\textwidth]{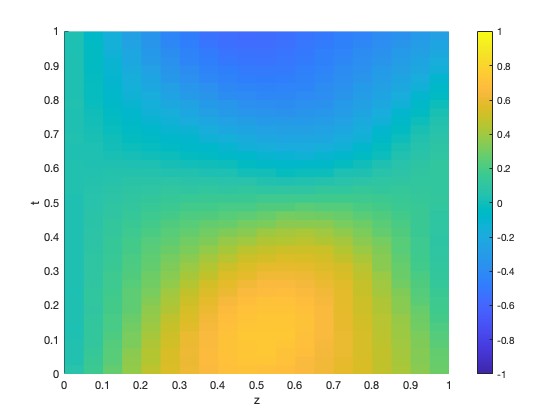}}\ 
\subfloat{\includegraphics[width=0.3\textwidth]{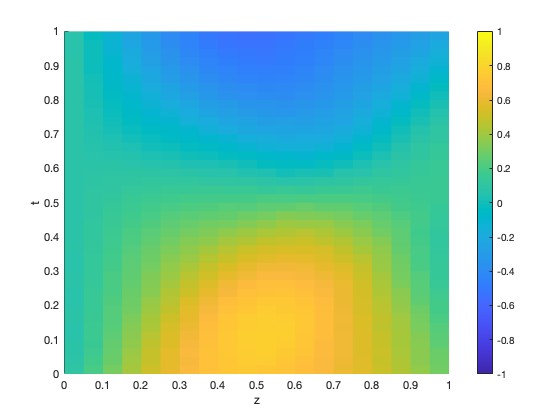}}\\
\subfloat[(a)\ $\delta=1\%$]{\includegraphics[width=0.3\textwidth]{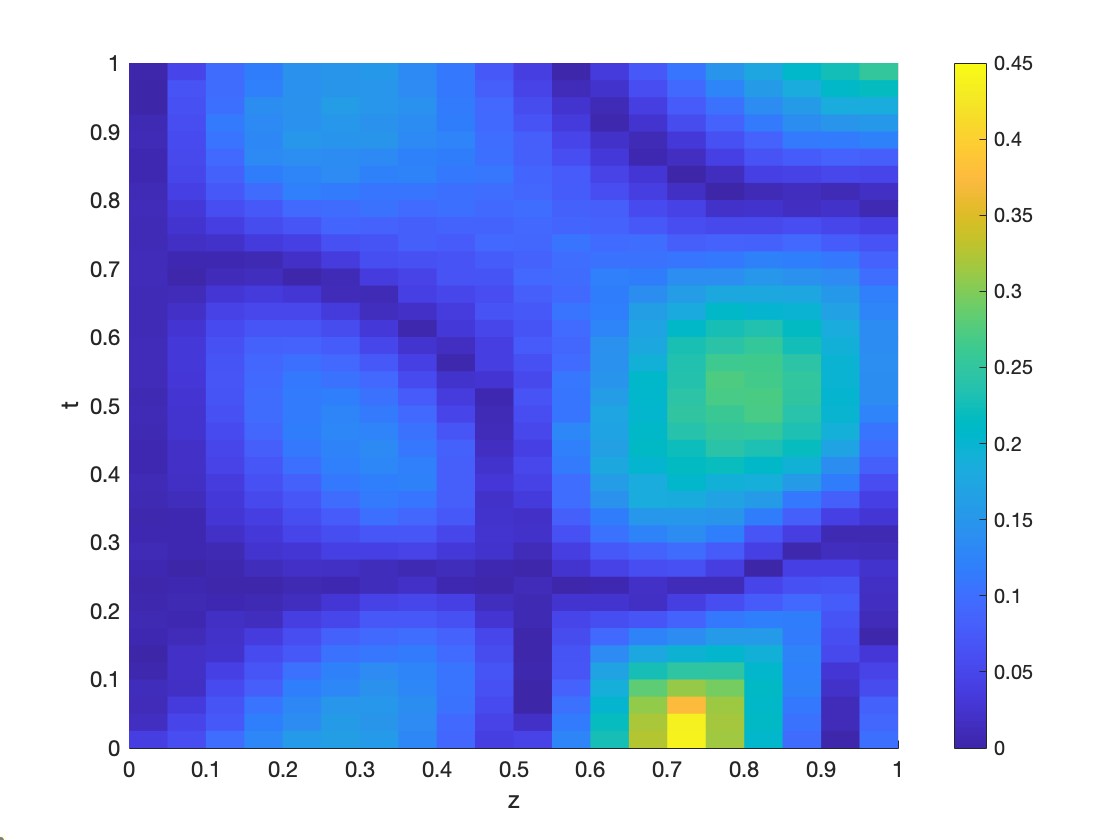}}\ 
\subfloat[(b)\ $\delta=5\%$]{\includegraphics[width=0.3\textwidth]{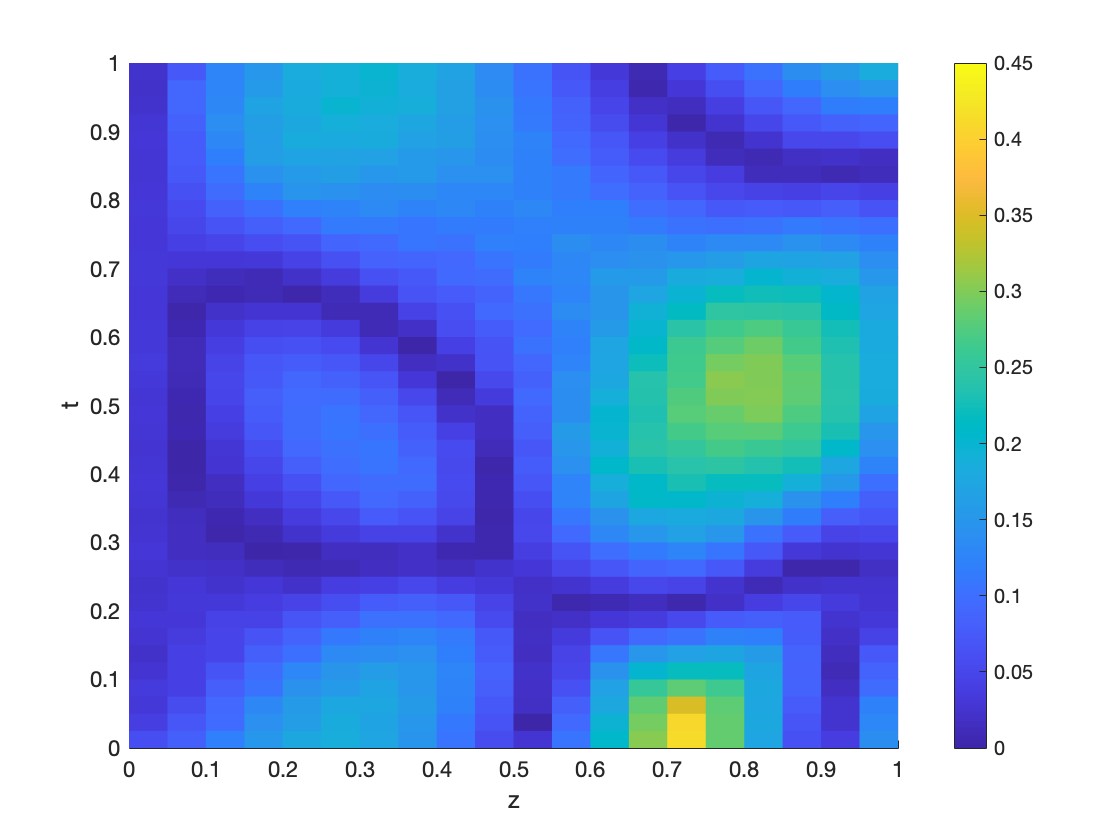}}\ 
%\subfloat[Approximation when $\delta=10\%$]{\includegraphics[width=0.3\textwidth]{F9E.jpg}}
\subfloat[(c)\ $\delta=10\%$]{\includegraphics[width=0.3\textwidth]{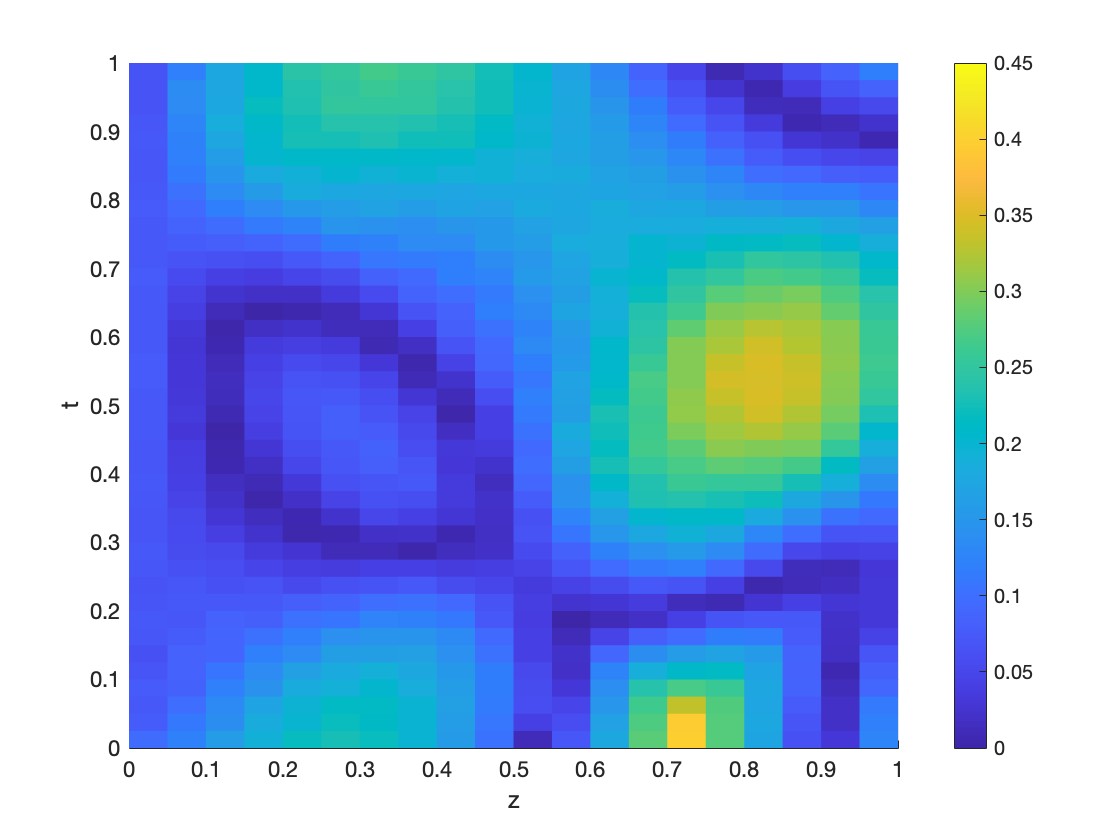}}
\caption{Numerical approximations (above) and relative errors  $E_1(x,t)$ (below) with different noise levels of Example \ref{ex1d}(c).}\label{ex1d5}
\end{figure}

In the following example, we probe numerical algorithms in $G\subset\dbR^2$. %We set $G$ as shown in the Figure \ref{sr}. 
%For example \ref{ex2d}(a), we get the same conclusion as in the one dimension case, that when the number of sample paths reaches $10$ or more, no better results would be obtained as shown in figure \ref{ex2d1}. 
Due to the storage limitations of computers and computational costs, we restrict the number of sample paths $\#=10$ in Example \ref{ex2d} (a) and (b). 
 
\begin{example}\label{ex2d}

(a) Suppose $G=\{(x_1,x_2)\in\mathbb{\dbR}^2| x_1^2+x_2^2<1\}$.
%$
%z_0(x,y)=0,\quad \tilde{z}_0(x,y)=0,\quad
%f(x,y,t)=(2+2\pi^2t^2)\sin{\pi x}\sin{\pi y},\quad h_1(x,y,t)=0
%$
%
%
%
%(b)
Let  $z_0(x_1,x_2)=0$, $\tilde{z}_0(x_1,x_2)=0$,
$f(x_1,x_2,t)=2(1+\pi^2t^2)\sin\pi (x_1+ x_2)$, $h_1(x_1,x_2,t)=t^2\sin\pi( x_1+ x_2).$

\vspace{3pt}
(b) Suppose $G$ is bounded by the curve  $\ds r=\sin2\theta,\theta\in (0,\frac\pi2)$ in polar coordinates, as shown in Figure \ref{sr}.
Let $z_0(x_1,x_2)=0,$ $\tilde{z}_0(x_1,x_2)=0, $
$f(x_1,x_2,t)=2(1+\pi^2t^2)\cos\pi x_1\cos\pi x_2$, $h_1(x_1,x_2,t)=t^2\cos\pi x_1\cos\pi x_2$.
\begin{figure}[htbp]
  \centering
%\subfloat[]{\includegraphics[width=0.22\textwidth]{ex2d1point.jpg}}
%\subfloat[]{\includegraphics[width=0.3\textwidth]{ex2d2point.jpg}}
\subfloat[]{\includegraphics[width=0.3\textwidth]{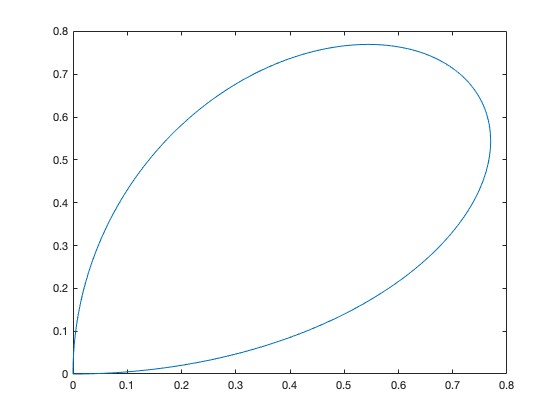}}
%\subfloat[]{\includegraphics[width=0.22\textwidth]{ex2d4point.jpg}}
%\subfloat[]{\includegraphics[width=0.3\textwidth]{ex2d5point.jpg}}
\caption{Domain G of Example \ref{ex2d} (b).}\label{sr}
\end{figure}

\vspace{3pt}
(c) Suppose  $G=(0,1)^2$, $G_1=[0.4,0.6]^2\subset G$.
%$z_0(x,y)=0,\quad \tilde{z}_0(x,y)=0,\quad
%f(x,y,t)=(2+2\pi^2t^2)\sin{(\pi x)}\sin{(\pi y)},\quad h_1(x,y,t)=\sin{(\pi x)}\sin{(\pi y)}t^2
%$
%
%\vspace{3pt}
%(d)
Let $z_0(x_1,x_2)=\begin{cases}
1,&(x_1,x_2)\in G_1,\\
0,&(x_1,x_2)\in G\backslash G_1,
\end{cases}$ $\tilde{z}_0(x_1,x_2)=0$, 
$f(x_1,x_2,t)=0$,  $h_1(x_1,x_2)=0.$
\end{example}

%\begin{figure}[htbp]
%  \centering
%\subfloat[Relative error $E_3(t)$ for different sample paths]{\includegraphics[width=0.3\textwidth]{F4A.jpg}}
%\subfloat[Relative error $E_2(x_1,x_2)$ when $\#=1$]{\includegraphics[width=0.3\textwidth]{F4B.jpg}}
%\subfloat[Relative error $E_2(x_1,x_2)$ when $\#=10$]{\includegraphics[width=0.3\textwidth]{F4C.jpg}}
%\\
%\subfloat[Relative error $E_2(x_1,x_2)$ when $\#=100$]{\includegraphics[width=0.3\textwidth]{F4D.jpg}}
%\subfloat[Relative error $E_2(x_1,x_2)$ when $\#=1000$]{\includegraphics[width=0.3\textwidth]{F4E.jpg}}
%\subfloat[Relative error $E_2(x_1,x_2)$ when $\#=10000$]{\includegraphics[width=0.3\textwidth]{F4F.jpg}}
%\caption{Relative error $E_3(t)$ and $E_2(x_1,x_2)$ for different sample paths of Example \ref{ex2d}(a)}\label{ex2d1}
%\end{figure}

%In order to demonstrate the effect of noise level on the numerical results of the Cauchy problem (\ref{cauchy problem}), for Example \ref{ex2d}(b), we obtain 
Figure \ref{rtex2d2} shows the the relative error $E_2(x_1,x_2)$ and $E_3(t)$ for different noise levels of Example \ref{ex2d}(a). 

\begin{figure}[htbp]
  \centering
\subfloat[$\delta=1\%$]{\includegraphics[width=0.24\textwidth]{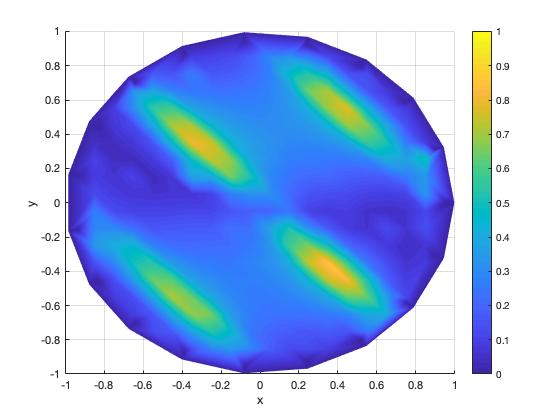}}
\subfloat[$\delta=5\%$]{\includegraphics[width=0.24\textwidth]{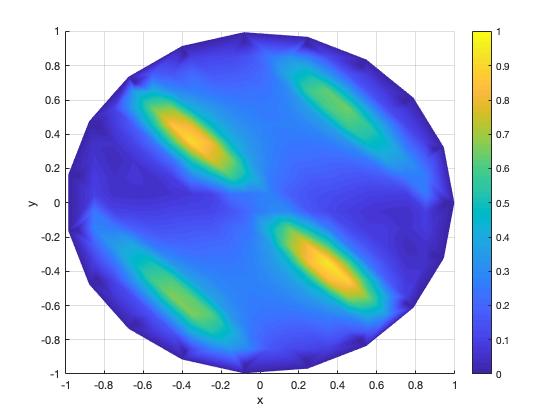}}
\subfloat[$\delta=10\%$]{\includegraphics[width=0.24\textwidth]{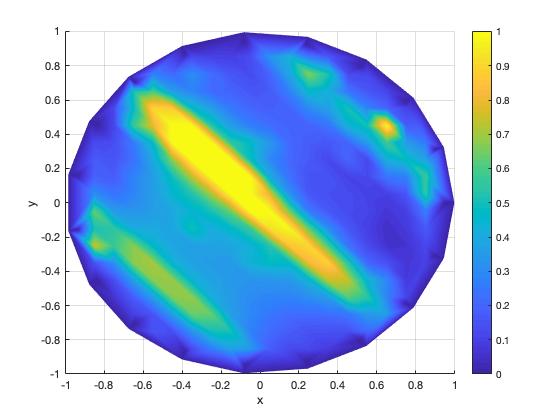}}
%\caption{Relative errors  $E_2(x_1,x_2)$ with different noise level of Example \ref{ex2d}(a).}\label{rex2d2}
%\end{figure}
%
%\begin{figure}[htbp]
%  \centering
\subfloat[$E_3(t)$]{\includegraphics[width=0.24\textwidth]{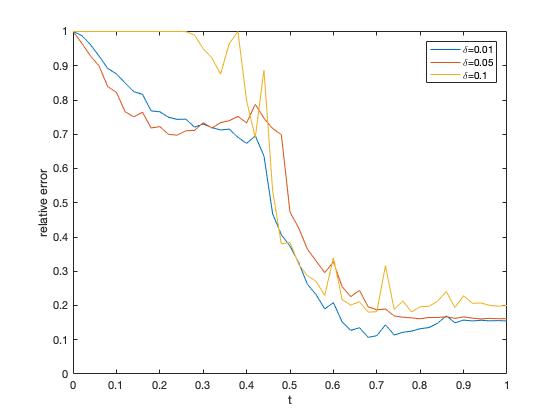}}
\caption{Relative errors $E_2(x_1,x_2)$ and  $E_3(t)$ for different noise level of Example \ref{ex2d}(a).}\label{rtex2d2}
\end{figure}

The influence of parameters $c$ and $R$ are explored by Example \ref{ex2d}(c) with $\delta=2\%$ in a leaf-shaped area. Firstly we set $R=1$ to test the possible range of values for $c$. Figure \ref{ex2d3}(a-e) show the relative error $E_2(x_1,x_2)$ for different values of $c$ when $R=1$. Subsequently, we tested the choice for $R$ by setting $c=2.5$ based on the previous results . Figure \ref{ex2d3}(f-j) indicate that $R=1.5$ is a suitable choice. We further verify the choice of these parameters by examining the relative errors $E_3(t)$ in Figure \ref{rex2d3}. Therefore, we consistently select $c=2.5, R=1.5$ in the subsequent computation.

\begin{figure}[htbp]
  \centering
\subfloat[$c=0.1$]{\includegraphics[width=0.2\textwidth]{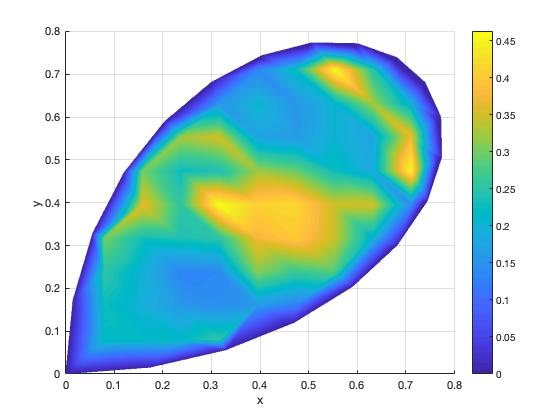}}
\subfloat[$c=0.5$]{\includegraphics[width=0.2\textwidth]{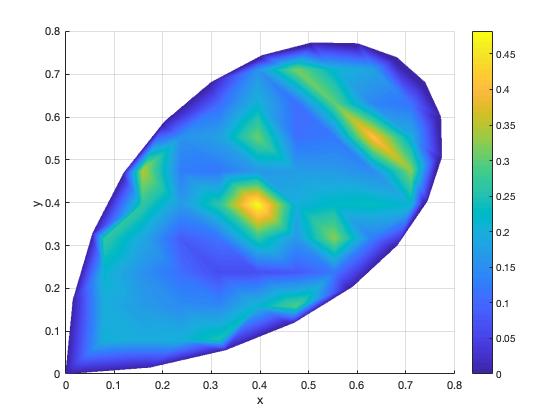}}
\subfloat[$c=1$]{\includegraphics[width=0.2\textwidth]{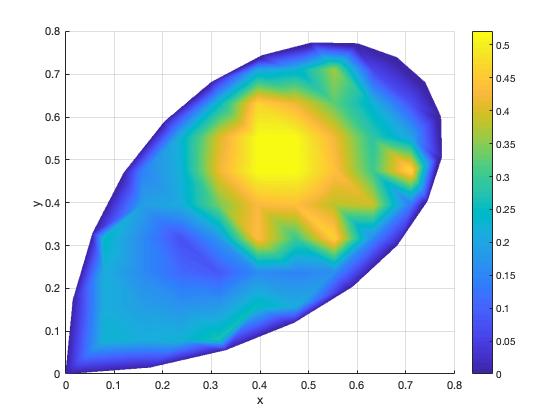}}
\subfloat[$c=1.5$]{\includegraphics[width=0.2\textwidth]{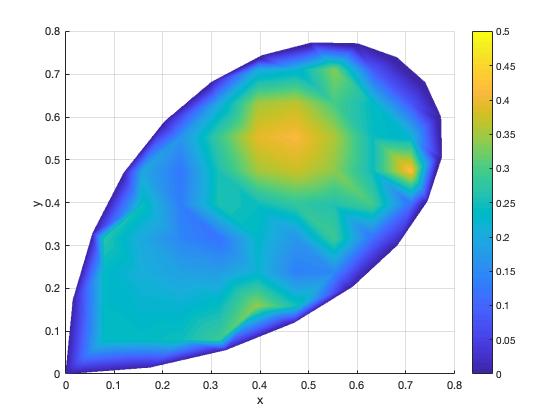}}
\subfloat[$c=2$]{\includegraphics[width=0.2\textwidth]{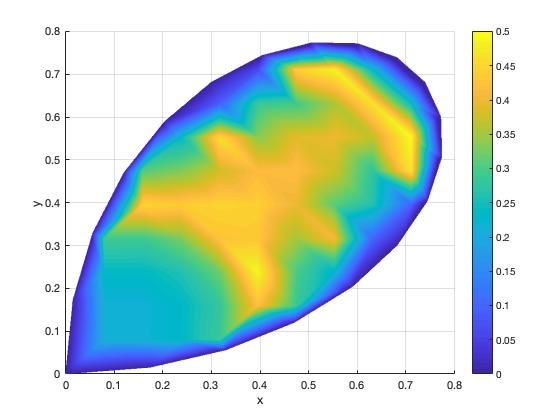}}
%\subfloat[$E_2(x_1,x_2)$ when $c=3$]{\includegraphics[width=0.3\textwidth]{F6F.jpg}}
\\
\subfloat[$R=0.5$]{\includegraphics[width=0.2\textwidth]{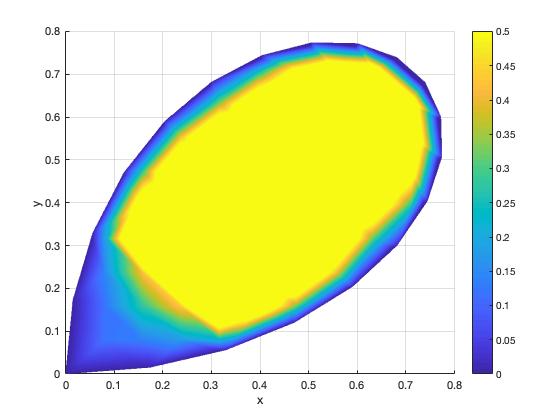}}
\subfloat[$R=1$]{\includegraphics[width=0.2\textwidth]{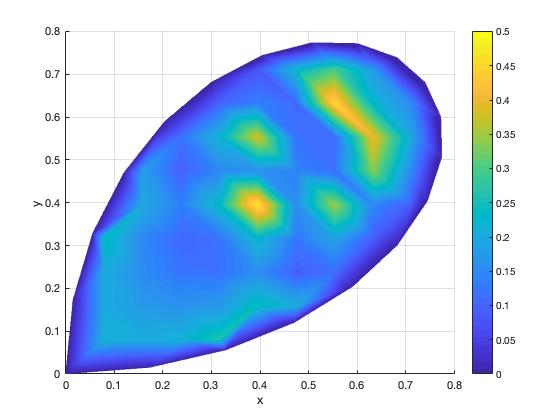}}
\subfloat[$R=2$]{\includegraphics[width=0.2\textwidth]{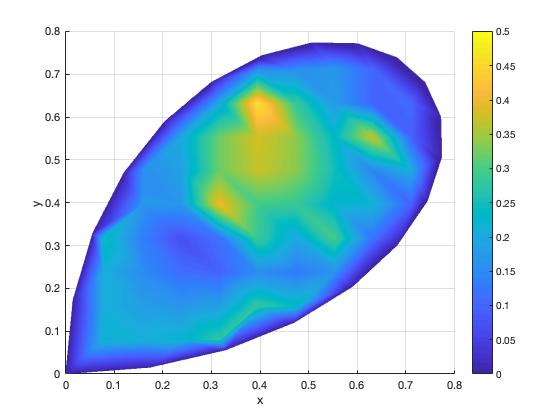}}
\subfloat[$R=2.5$]{\includegraphics[width=0.2\textwidth]{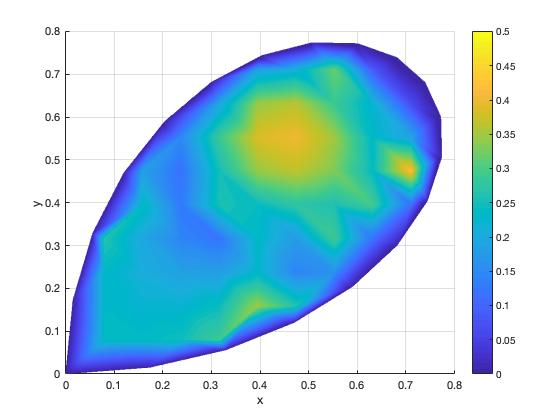}}
\subfloat[$R=3$]{\includegraphics[width=0.2\textwidth]{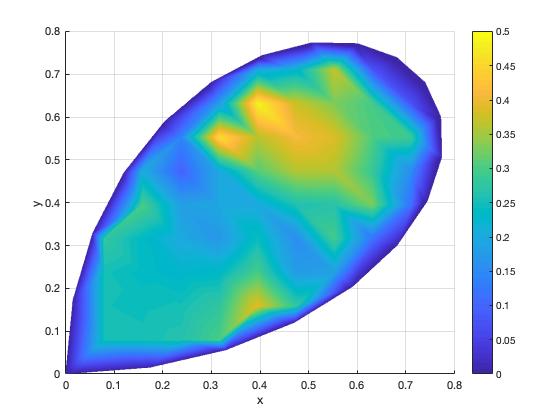}}
%\subfloat[$E_2(x_1,x_2)$ when $R=4$]{\includegraphics[width=0.2\textwidth]{F6L.jpg}}
\caption{Relative errors $E_2(x_1,x_2)$ for different $c$ and $R$ of Example \ref{ex2d}(b).}\label{ex2d3}
\end{figure}

\begin{figure}[htbp]
  \centering
\subfloat[Relative error $E_3(t)$ for different $R$]{\includegraphics[width=0.3\textwidth]{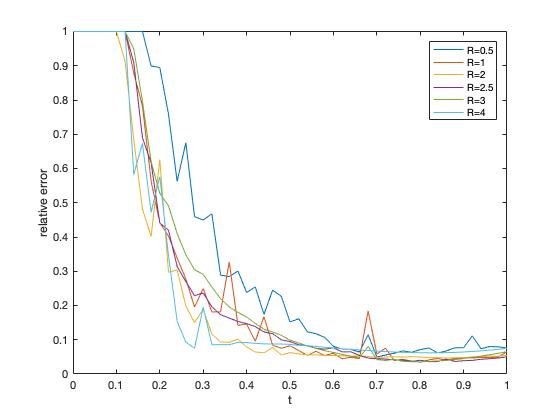}}\quad
\subfloat[Relative error $E_3(t)$ for different $c$]{\includegraphics[width=0.3\textwidth]{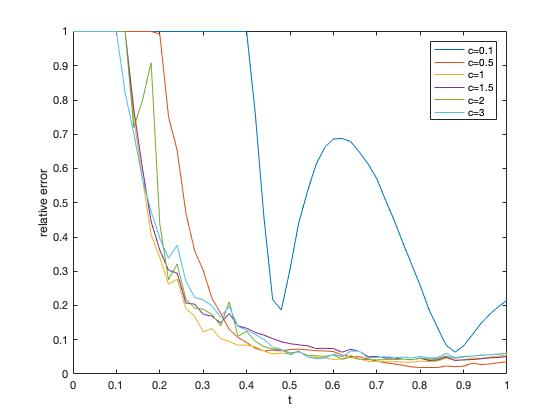}}
\caption{Relative errors  $E_3(t)$ for different $c$ and $R$ of Example \ref{ex2d}(b).}\label{rex2d3}
\end{figure}

Finally, we conducted numerical experiments with a discontinuous initial value function. As shown in Figure \ref{rex2d5}(b), the relative error is generally bigger in this case. However, the numerical results remain acceptable. Figure \ref{ex2d5} illustrates the comparison between the numerical solution for system  \eqref{fp} and approximations for {\bf Problem (C)}.

\begin{figure}[htbp]
  \centering
\subfloat[$E_2(x_1,x_2)$]{\includegraphics[width=0.3\textwidth]{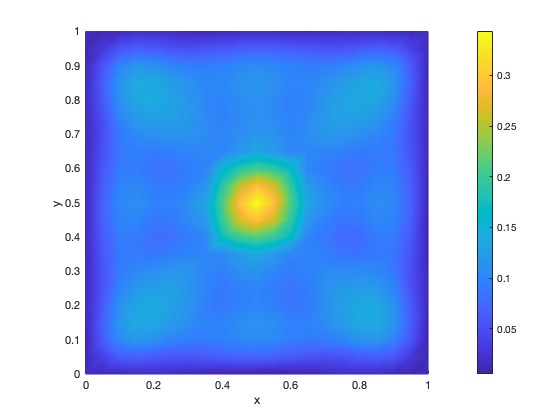}}\quad
\subfloat[$E_3(t)$]{\includegraphics[width=0.3\textwidth]{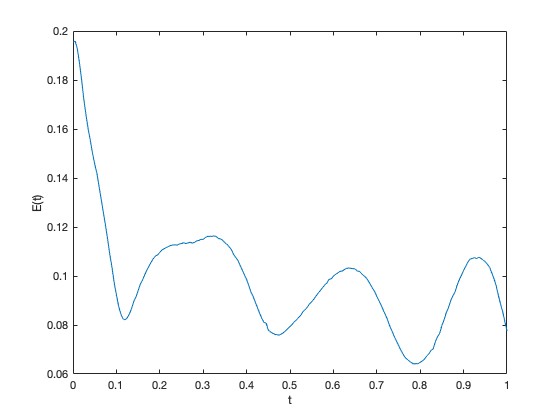}}
\caption{Relative errors of Example \ref{ex2d}(c) with $\delta=7\%$}\label{rex2d5}
\end{figure}

%\begin{figure}[htbp]
%  \centering
%\subfloat[$E_2(x_1,x_2)$ when $R=1.5$]{\includegraphics[width=0.5\textwidth]{FXB.jpg}}
%\subfloat[$E_2(x_1,x_2)$ when $R=2$]{\includegraphics[width=0.5\textwidth]{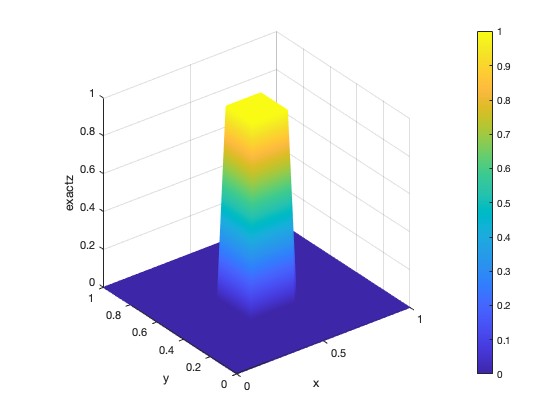}}
%\\
%\subfloat[$E_2(x_1,x_2)$ when $R=2.5$]{\includegraphics[width=0.5\textwidth]{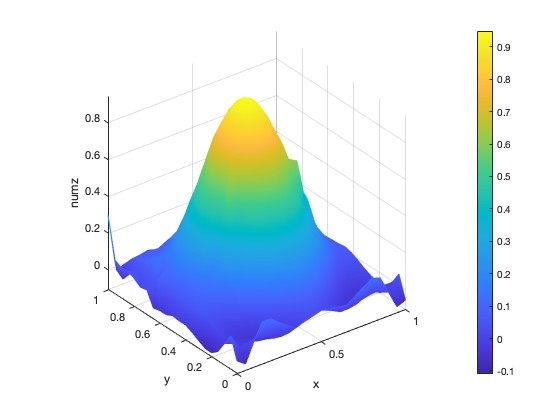}}
%\subfloat[$E_2(x_1,x_2)$ when $c=3$]{\includegraphics[width=0.5\textwidth]{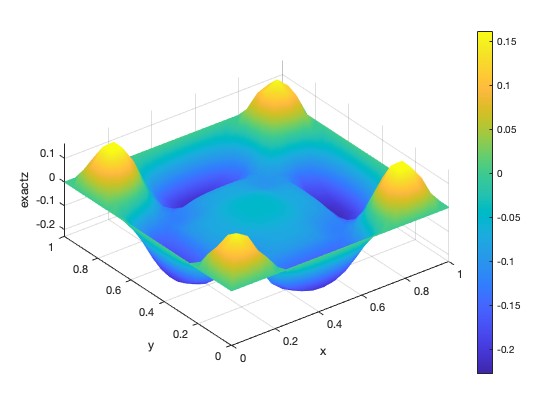}}
%\caption{Relative errors $E_2(x_1,x_2)$ for different $R$ of Example \ref{ex2d}(e)}\label{ex2d5}
%\end{figure}

\begin{figure}[htbp]
  \centering
      \captionsetup[subfloat]{labelsep=none,format=plain,labelformat=empty}
\subfloat{\includegraphics[width=0.3\textwidth]{FXC.jpg}}\ 
\subfloat{\includegraphics[width=0.3\textwidth]{FXE.jpg}}\ 
\subfloat{\includegraphics[width=0.3\textwidth]{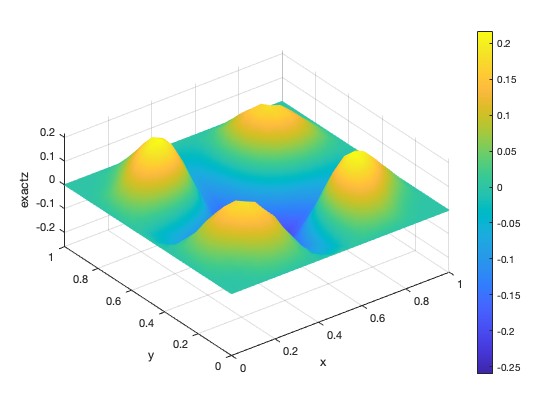}}\\
%\subfloat[Numerical solution of the IVP when $t=0$]{\includegraphics[width=0.25\textwidth]{FXXC.jpg}}
\subfloat[$t=0$]{\includegraphics[width=0.3\textwidth]{FXD.jpg}}\ 
%\subfloat[Approximation when $t=0$]{\includegraphics[width=0.25\textwidth]{FXXD.jpg}}\\
%\subfloat[Numerical solution of the IVP when $t=0.5$]{\includegraphics[width=0.25\textwidth]{FXXE.jpg}}
\subfloat[$t=0.5$]{\includegraphics[width=0.3\textwidth]{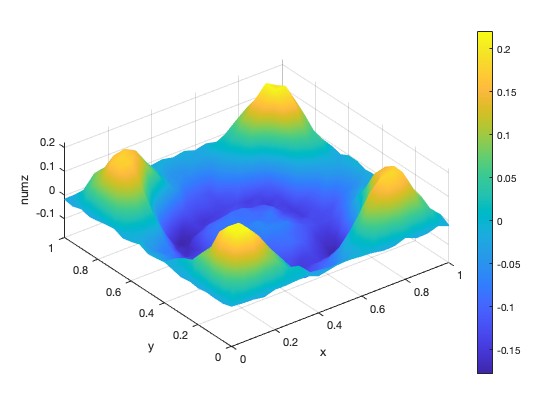}}\ 
%\subfloat[Approximation when $t=0.5$]{\includegraphics[width=0.25\textwidth]{FXXF.jpg}}\\
%\subfloat[Numerical solution of the IVP when $t=1$]{\includegraphics[width=0.25\textwidth]{FXXG.jpg}}
\subfloat[$t=1$]{\includegraphics[width=0.3\textwidth]{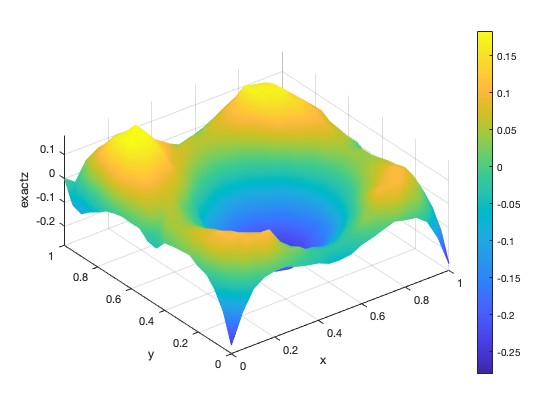}}
%\subfloat[Approximation when $t=1$]{\includegraphics[width=0.25\textwidth]{FXXH.jpg}}
\caption{Numerical solution of system \eqref{fp} (above) and approximations of {\bf Problem (C)} (below) for $\delta=7\%$ of Example \ref{ex2d}(e) when $t=0$ , $t=0.5$ and $t=1$.}\label{ex2d5}
\end{figure}

\section{Conclusion}

We have studied an inverse Cauchy problem for the stochastic hyperbolic equation without an initial condition. An observability estimate is established based on a pointwise Carleman identity. Under a suitable a-priori assumption, we construct an approximation using Tikhonov regularization strategy, and prove that the minimizer of the proposed functional exist uniquely, as well as the convergence rate of the minimizer to the exact solution. An easily implementable numerical algorithm is provided by means of kernel-based learning theory and mild solution to the stochastic hyperbolic equation. Since the exact solution for the stochastic hyperbolic equation can not be expressed exactly, we solving the initial-boundary value problem by virtue of  kernel-based learning theory and radial basis functions to obtain the boundary observation data. Numerical experiments demonstrated that the proposed algorithm performed well, even when the initial condition is discontinuous and the spatial domain is irregular. However, as the Green's function becomes more irregular when the dimension of spatial domain exceeds 3, more technique should be supplement to enhance the numerical algorithm. Furthermore, it is more challenging to address other inverse problems related to stochastic hyperbolic equation, such as the inverse potential problem and the inverse source problem, and to explore additional properties of these equations. We plan to study these issues in future work.

\section*{Acknowledgement}
This work is partially supported by the NSFC (No. 12071061), the Science Fund for Distinguished Young Scholars of Sichuan Province (No. 2022JDJQ0035).

\end{document}